\author{S{\'e}bastien Alvarez\thanks{S.A. was supported by a post-doctoral grant financed by CAPES} \and Jiagang Yang \thanks{J.Y was partially supported by CNPq, FAPERJ, and PRONEX}}
\date{}
\title{Physical measures for the geodesic flow tangent to a transversally conformal foliation}
\begin{document}

\newtheorem{maintheorem}{Theorem}
\newtheorem{maincoro}[maintheorem]{Corollary}
\renewcommand{\themaintheorem}{\Alph{maintheorem}}
\newcounter{theorem}[section]
\newtheorem{exemple}{\bf Exemple \rm}
\newtheorem{exercice}{\bf Exercice \rm}
\newtheorem{conj}[theorem]{\bf Conjecture}
\newtheorem{defi}[theorem]{\bf Definition}
\newtheorem{lemma}[theorem]{\bf Lemma}
\newtheorem{proposition}[theorem]{\bf Proposition}
\newtheorem{coro}[theorem]{\bf Corollary}
\newtheorem{theorem}[theorem]{\bf Theorem}
\newtheorem{rem}[theorem]{\bf Remark}
\newtheorem{ques}[theorem]{\bf Question}
\newtheorem{propr}[theorem]{\bf Property}
\newtheorem{question}{\bf Question}
\def\bp{\noindent{\it Proof. }}
\def\ep{\noindent{\hfill $\fbox{\,}$}\medskip\newline}
\renewcommand{\theequation}{\arabic{section}.\arabic{equation}}
\renewcommand{\thetheorem}{\arabic{section}.\arabic{theorem}}
\newcommand{\eps}{\varepsilon}
\newcommand{\disp}[1]{\displaystyle{\mathstrut#1}}
\newcommand{\fra}[2]{\displaystyle\frac{\mathstrut#1}{\mathstrut#2}}
\newcommand{\dif}{{\rm Diff}}
\newcommand{\homeo}{{\rm Homeo}}
\newcommand{\Per}{{\rm Per}}
\newcommand{\Fix}{{\rm Fix}}
\newcommand{\A}{\mathcal A}
\newcommand{\Z}{\mathbb Z}
\newcommand{\Q}{\mathbb Q}
\newcommand{\R}{\mathbb R}
\newcommand{\C}{\mathbb C}
\newcommand{\N}{\mathbb N}
\newcommand{\T}{\mathbb T}
\newcommand{\U}{\mathbb U}
\newcommand{\D}{\mathbb D}
\newcommand{\PP}{\mathbb P}
\newcommand{\Sp}{\mathbb S}
\newcommand{\K}{\mathbb K}
\newcommand{\car}{\mathbf 1}
\newcommand{\g}{\mathfrak g}
\newcommand{\gs}{\mathfrak s}
\newcommand{\h}{\mathfrak h}
\newcommand{\rr}{\mathfrak r}
\newcommand{\fhi}{\varphi}
\newcommand{\ffhi}{\tilde{\varphi}}
\newcommand{\moins}{\setminus}
\newcommand{\ds}{\subset}
\newcommand{\W}{\mathcal W}
\newcommand{\WW}{\widetilde{W}}
\newcommand{\F}{\mathcal F}
\newcommand{\G}{\mathcal G}
\newcommand{\CC}{\mathcal C}
\newcommand{\RR}{\mathcal R}
\newcommand{\DD}{\mathcal D}
\newcommand{\M}{\mathcal M}
\newcommand{\B}{\mathcal B}
\newcommand{\cS}{\mathcal S}
\newcommand{\HH}{\mathcal H}
\newcommand{\NN}{\mathcal N}
\newcommand{\Hyp}{\mathbb H}
\newcommand{\UU}{\mathcal U}
\newcommand{\Pp}{\mathcal P}
\newcommand{\QQ}{\mathcal Q}
\newcommand{\E}{\mathcal E}
\newcommand{\GG}{\Gamma}
\newcommand{\LL}{\mathcal L}
\newcommand{\OO}{\mathcal O}
\newcommand{\KK}{\mathcal K}
\newcommand{\TT}{\mathcal T}
\newcommand{\X}{\mathcal X}
\newcommand{\Y}{\mathcal Y}
\newcommand{\ZZ}{\mathcal Z}
\newcommand{\bE}{\overline{E}}
\newcommand{\bF}{\overline{F}}
\newcommand{\wF}{\widetilde{F}}
\newcommand{\hcF}{\widehat{\mathcal F}}
\newcommand{\bW}{\overline{W}}
\newcommand{\bcW}{\overline{\mathcal W}}
\newcommand{\tL}{\widetilde{L}}
\newcommand{\tW}{\widetilde{W}}
\newcommand{\DeltaF}{\Delta^{\mathcal F}}
\newcommand{\SSS}{S^{(3)}}
\newcommand{\hM}{\widehat{M}}
\newcommand{\hL}{\widehat{L}}
\newcommand{\hLam}{\widehat{\Lambda}}
\newcommand{\hg}{\widehat{g}}
\newcommand{\diam}{{\rm diam}}
\newcommand{\diag}{{\rm diag}}
\newcommand{\Jac}{{\rm Jac}}
\newcommand{\Plong}{{\rm Plong}}
\newcommand{\Tr}{{\rm Tr}}
\newcommand{\Conv}{{\rm Conv}}
\newcommand{\Ext}{{\rm Ext}}
\newcommand{\Spec}{{\rm Sp}}
\newcommand{\Isom}{{\rm Isom}\,}
\newcommand{\Supp}{{\rm Supp}\,}
\newcommand{\Grass}{{\rm Grass}}
\newcommand{\Hold}{{\rm H\ddot{o}ld}}
\newcommand{\Ad}{{\rm Ad}}
\newcommand{\ad}{{\rm ad}}
\newcommand{\e}{{\rm e}}
\newcommand{\s}{{\rm s}}
\newcommand{\pol}{{\rm pole}}
\newcommand{\hol}{{\rm hol}}
\newcommand{\Aut}{{\rm Aut}}
\newcommand{\End}{{\rm End}}
\newcommand{\Cl}{{\rm Cl}}
\newcommand{\Eu}{{\rm Eu}}
\newcommand{\Leb}{{\rm Leb}}
\newcommand{\Liouv}{{\rm Liouv}}
\newcommand{\Lip}{{\rm Lip}}
\newcommand{\Int}{{\rm Int}}
\newcommand{\HD}{{\rm HD}}
\newcommand{\cc}{{\rm cc}}
\newcommand{\grad}{{\rm grad}}
\newcommand{\proj}{{\rm proj}}
\newcommand{\PSL}{{\rm PSL}}
\newcommand{\mass}{{\rm mass}}
\newcommand{\dive}{{\rm div}}
\newcommand{\dist}{{\rm dist}}
\newcommand{\im}{{\rm Im}}
\newcommand{\re}{{\rm Re}}
\newcommand{\codim}{{\rm codim}}
\newcommand{\Map}{\longmapsto}
\newcommand{\vide}{\emptyset}
\newcommand{\tr}{\pitchfork}
\newcommand{\ssl}{\mathfrak{sl}}

\newenvironment{demo}{\noindent{\textbf{Proof.}}}{\quad \hfill $\square$}
\newenvironment{pdemo}{\noindent{\textbf{Proof of the proposition.}}}{\quad \hfill $\square$}
\newenvironment{IDdemo}{\noindent{\textbf{Idea of proof.}}}{\quad \hfill $\square$}

\def\to{\mathop{\rightarrow}}
\def\act{\mathop{\curvearrowright}}
\def\To{\mathop{\longrightarrow}}
\def\Sup{\mathop{\rm Sup}}
\def\Max{\mathop{\rm Max}}
\def\Inf{\mathop{\rm Inf}}
\def\Min{\mathop{\rm Min}}
\def\lims{\mathop{\overline{\rm lim}}}
\def\limi{\mathop{\underline{\rm lim}}}
\def\egal{\mathop{=}}
\def\dans{\mathop{\subset}}
\def\surj{\mathop{\twoheadrightarrow}}

\maketitle

\begin{abstract}
We consider a transversally conformal foliation $\F$ of a closed manifold $M$ endowed with a smooth Riemannian metric whose restriction to each leaf is negatively curved. We prove that it satisfies the following dichotomy. Either there is a transverse holonomy-invariant measure for $\F$, or the foliated geodesic flow admits a finite number of physical measures, which have negative transverse Lyapunov exponents and whose basin cover a set full for the Lebesgue measure. We also give necessary and sufficient conditions for the foliated geodesic flow to be partially hyperbolic in the case where the foliation is transverse to a projective circle bundle over a closed hyperbolic surface.
\end{abstract}

\section*{Introduction}\label{intro}

The existence of a transverse holonomy-invariant measure for a foliation whose leaves have dimension $2$ or more is a rare phenomenon. The ergodic study of a foliation classically refers to the statistical description of Brownian paths tangent to its leaves: see \cite{Garnett}. In this paper we develop a different viewpoint and study the ergodic properties of geodesics tangent to the leaves of foliations.

All along this work $(M,\F)$ stands for a smooth (i.e. of class $C^{\infty}$) \emph{closed foliated manifold} of codimension $q$ endowed with a smooth Riemannian metric $g$. Up to passsing to a double cover, we will always assume that our foliations are \emph{oriented}. We will make the two following hypotheses

\begin{enumerate}
\item every leaf $L$ has \emph{negative sectional curvature} for the induced metric $g_{|L}$;
\item the foliation $\F$ is \emph{transversally conformal}.
\end{enumerate}

The first hypothesis is satisfied for example by every foliation transverse to a fiber bundle over a closed negatively curved manifold (see \S \ref{suspensioncocycles}). Moreover for every foliation by surfaces without transverse holonomy-invariant measure there exists a Riemannian metric on the ambient space such that the first hypothesis is satisfied (see Theorem \ref{negativeurvature}). The second hypothesis is satisfied by every codimension $1$ foliation and by (singular) holomorphic foliations on complex surfaces. It means that the \emph{holonomy pseudogroup} of $\F$ consists of conformal local diffeomorphisms of $\R^q$ (i.e. their derivatives at every point are similitudes of the Euclidean space).

We shall denote by $\hM$ the unit tangent bundle of the foliation $\F$ i.e. the set of unit vectors tangent to $\F$. Unit tangent bundles of leaves of $\F$ form  a foliation of $\hM$ denoted by $\hcF$. The \emph{foliated geodesic flow} is the smooth and leaf-preserving flow of $\hcF$ denoted by $G_t$ which induces on each leaf its geodesic flow. Since the leaves are negatively curved $G_t$ exhibits a weak form of hyperbolicity called \emph{foliated hyperbolicity} which is defined and analyzed by Bonatti, G{\'o}mez-Mont and Mart{\'i}nez in \cite{BGM}, and studied by the first author in \cite{AlvarezHarmonic,AlvarezGibbs,Alvarezu-Gibbs}. It means that there exist two continuous and $G_t$-invariant subfoliations of $\hcF$, called \emph{stable and unstable foliations} and denoted by $\W^s$ and $\W^u$, whose leaves are respectively uniformly exponentially contracted and expanded by $G_t$. This notion resembles the classical definition of \emph{partial hyperbolicity} for flows, which will be defined in \S \ref{phflows}, the transverse direction of the foliation playing the role of the central direction. But there is a main difference: the contraction, or expansion, in the transverse direction does not need to be dominated by the hyperbolicity inside the leaves. We will later discuss this matter.

The goal of that paper is twofold. Firstly we wish to describe the ergodic properties of the flow. Secondly we wish to discuss the relations between partial hyperbolicity and foliated hyperbolicity through the study of special examples: the foliations transverse to a projective circle bundle over a surface.

\paragraph{Finiteness of SRB measures --} Recall that an \emph{SRB measure} or \emph{physical measure} for $G_t$ is a $G_t$-invariant probability measure $\mu$ whose basin (the set of $v\in\hM$ such that the averages of Dirac masses along the orbit of $v$ converges to $\mu$ in the weak$^{\ast}$ sense) has positive Lebesgue measure.

They are named after by Sina{\u\i}, Ruelle and Bowen who introduced them for uniformly hyperbolic dynamics in \cite{Sinai,Ruelle,BR}. The question of the existence and finiteness of SRB measures for partially hyperbolic dynamics was studied by Bonatti, Viana in \cite{BV} and together with Alves in \cite{ABV}. It is proven in \cite{BV} that a partially hyperbolic diffeomorphism which is ``mostly contracting'' in the center direction has a finite number of SRB measures and that the union of their basins is full for the Lebesgue measure. Our situation is similar and we propose to prove the following dichotomy.

\begin{maintheorem}
\label{SRBmeasuresfgf}
Let $\F$ be a smooth transversally conformal foliation of a closed manifold $M$. Assume that $M$ is endowed with a smooth Riemannian metric such that every leaf is negatively curved for the restricted metric. Then we have the following dichotomy
\begin{itemize}
\item either there exists a transverse measure invariant by holonomy;
\item or $\hcF$ has a finite number of minimal sets each of which supports a unique SRB measure for $G_t$. These measures have negative transverse Lyapunov exponents and the union of their basins is full for the Lebesgue measure.
\end{itemize}
In the latter case it follows in particular that these SRB measures for $G_t$ are the unique ones and that $\F$ has finitely many minimal sets as well.
\end{maintheorem}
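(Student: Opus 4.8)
The plan is to decide the dichotomy according to the sign of the transverse Lyapunov exponent of $G_t$. Because $\F$, hence $\hcF$, is transversally conformal, the derivative of $G_t$ in the direction transverse to $\hcF$ is at every point and every time a similitude of $\R^q$; its logarithmic dilation therefore defines an \emph{additive} cocycle $\phi_t(v)=\int_0^t\psi(G_sv)\,ds$ over the flow, for a single continuous function $\psi\colon\hM\to\R$, and for every $G_t$-invariant probability $\mu$ the transverse exponent collapses to the single number $\lambda(\mu)=\int\psi\,d\mu$. In particular $\mu\mapsto\lambda(\mu)$ is weak-$\ast$ continuous on the compact space of invariant measures, so $\Lambda:=\sup\{\lambda(\mu)\}$ is attained by some ergodic $\mu_0$. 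The first and main step is the alternative: \emph{either} $\Lambda\ge 0$, and then $\F$ carries a transverse holonomy-invariant measure; \emph{or} $\Lambda<0$, in which case every ergodic $G_t$-invariant measure has transverse exponent $\le\Lambda<0$ — the ``mostly contracting'' regime, on which the rest of the proof operates.

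For the first horn one transfers $\mu_0$ to the transverse dynamics. Pushing $\mu_0$ down by the bundle projection $\pi\colon\hM\to M$ and then, through the holonomy pseudogroup, to a complete system of local transversals produces measures on the transversal space on which the holonomy acts by conformal diffeomorphisms of open subsets of $\R^q$; the hypothesis $\lambda(\mu_0)\ge 0$ says precisely that along $\mu_0$-typical orbits these conformal germs do not contract on average. I expect this to be the main obstacle: one must set up the correspondence between $G_t$-invariant measures of the foliated geodesic flow and the stationary data of the transverse random dynamics, and then invoke the theory of conformal random dynamical systems \`a la Deroin--Kleptsyn — for which transversal conformality is essential, since for conformal maps of $\R^q$ a stationary measure with non-negative exponent must actually be invariant (there is no Furstenberg-type ``proximal but not invariant'' alternative) — so as to manufacture from $\mu_0$ a genuine holonomy-invariant transverse measure.

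Assume now $\Lambda<0$. I would produce the SRB measures as $u$-Gibbs states: fix a small disk $D$ contained in a leaf of $\W^u$ and let $\mu$ be any weak-$\ast$ accumulation point of the Ces\`aro averages $\frac1T\int_0^T(G_t)_*(\Leb_D)\,dt$. Foliated hyperbolicity gives uniform expansion of $G_t$ along $\W^u$, hence the usual bounded-distortion estimate for the pushed-forward densities, so every such $\mu$ has absolutely continuous conditional measures along $\W^u$. Since $\Lambda<0$, every ergodic component of $\mu$ has negative transverse exponent, while the leaves of $\W^s$ are uniformly contracted; thus all Lyapunov exponents of $\mu$ outside the one-dimensional $\W^u$-direction are negative. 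The standard absolute-continuity argument for the ``weak-stable $\oplus$ transverse'' lamination through $D$ then shows that the basin of $\mu$ has positive Lebesgue measure, i.e. $\mu$ is an SRB measure. Running the same argument starting from an unstable disk through a Lebesgue-typical point $v\in\hM$, the forward Birkhoff averages at $v$ accumulate on convex combinations of such SRB measures; ergodicity of the latter together with a Lebesgue-density-point argument then forces $v$ to lie in the basin of a single ergodic SRB measure, which gives the ``union of basins full for Lebesgue'' conclusion.

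It remains to establish finiteness and to identify the supports. One checks that the support of each ergodic SRB measure $\mu$ is a minimal set of $\hcF$: it is $G_t$- and $\W^u$-saturated, hence $\W^s$-saturated by a standard hyperbolicity argument and therefore $\hcF$-saturated, and minimality follows from the uniqueness (by ergodicity and absolute continuity along $\W^u$) of the $u$-Gibbs measure carried by any $\hcF$-saturated closed subset of $\Supp\mu$. Conversely, every minimal set $\hat m$ of $\hcF$ carries an ergodic SRB measure, obtained by the $u$-Gibbs construction from an unstable disk inside $\hat m$, and only one by ergodicity; so ergodic SRB measures correspond bijectively to minimal sets of $\hcF$. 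Distinct ergodic SRB measures have disjoint basins, and the uniform transverse contraction rate $\Lambda<0$ supplies the quantitative input — a uniform lower bound on basin sizes, in the spirit of Bonatti--Viana — that bounds their number; hence there are finitely many of them, and finitely many minimal sets of $\hcF$, each carrying exactly one SRB measure. Finally, two distinct minimal sets of $\F$ lift to two distinct minimal sets of $\hcF$ — a minimal set $m$ of $\F$ is the $\pi$-image of any $\hcF$-minimal set contained in $\pi^{-1}(m)$ — so $\F$ too has only finitely many minimal sets, which is the last assertion.
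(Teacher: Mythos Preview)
Your overall architecture --- split on the sign of the transverse exponent, then run the mostly-contracting machinery in the negative case --- matches the paper's. But the first horn is a genuine gap, and it is exactly the step you yourself flag as ``the main obstacle.'' You take $\Lambda=\sup_\mu\lambda^{\tr}(\mu)$ over \emph{all} $G_t$-invariant measures and hope that $\Lambda\ge 0$ forces a holonomy-invariant transverse measure by ``transferring $\mu_0$ to the transverse dynamics'' and invoking Deroin--Kleptsyn; but there is no mechanism offered, and the paper's argument is in fact entirely independent of the Brownian/harmonic theory. The actual route is: one only needs $\lambda^{\tr}<0$ for every \emph{Gibbs $u$-state} (that is all the Bonatti--Viana scheme requires), and if some ergodic Gibbs $u$-state $\mu$ has $\lambda^{\tr}(\mu)\ge 0$ then all negative Lyapunov exponents of $\mu$ lie in $E^s$, so Ruelle's inequality for $G_{-t}$ gives $h_\mu\le -\Lambda^s(\mu)$, while the $u$-Gibbs property yields $h_\mu\ge\Lambda^u(\mu)=-\Lambda^s(\mu)$ (the leafwise Liouville measure is $G_t$-invariant); hence Pesin's formula holds for $G_{-t}$, Ledrappier--Young forces $\mu$ to be a Gibbs $s$-state as well, and the $su$-Gibbs criterion of \cite{Alvarezu-Gibbs} makes $\mu$ totally invariant, producing the transverse invariant measure. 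You are missing this entropy/Ledrappier--Young step entirely, and by working over all invariant measures rather than over Gibbs $u$-states you have set yourself a strictly harder --- and, for this theorem, unnecessary --- problem.

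In the second horn your sketch is broadly correct in spirit, but two points need care. First, the transverse contraction is only \emph{nonuniform}: there is no ``weak-stable $\oplus$ transverse lamination,'' and the absolute-continuity input you need is Pesin's stable manifold theorem applied on a set of large measure where the Pesin charts have controlled size. Second, your claim that $\Supp\mu$ is $\hcF$-saturated --- ``$\W^u$-saturated, hence $\W^s$-saturated by a standard hyperbolicity argument'' --- is wrong as stated: the support of a Gibbs $u$-state is $\W^{cu}$-saturated (indeed $\W^{cu}$-minimal), but there is no general reason for it to be $\W^s$-saturated (think of an Axiom~A attractor). The paper obtains the bijection between ergodic Gibbs $u$-states and $\hcF$-minimal sets through a separate chain of lemmas that combine the Pesin stable manifolds transverse to $\hcF$ with the geometric fact that, inside each leaf $\hL_v$, the set $W^s(W^{cu}(v))$ has full Liouville volume; this is where the negatively curved geometry of the leaves is actually used.
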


We define the \emph{transverse Lyapunov exponent} of an ergodic $G_t$-invariant measure $\mu$ as the following limit independent of the choice of a $\mu$-typical $v\in\hM$

$$\lambda^{\tr}(\mu)=\lim_{t\to\infty}\frac{1}{t}\log||D_v\h_{G_{[0,t]}(v)}\omega||,$$
where $\omega\in\NN_{\F}=TM/T\F$, the normal bundle of $\F$, and $\h_{G_{[0,t]}(v)}$ denotes the holonomy map along the orbit segment $G_{[0,t]}(v)$. The fact that the number is independent of $\omega$ (i.e. that there is a unique transverse exponent) follows from the condition that $\F$ is transverally conformal (see \S \ref{transverselyapunovsection}). The question of knowing what happens when there is more than one transverse Lyapunov exponent (when the foliation is not transversally conformal) seems a difficult one. Bonatti, Eskin and Wilkinson treat the case of foliations transverse to a projective $\C\PP^{q}$-fiber bundle over a closed hyperbolic surface 
in \cite{BEW}.

This dichotomy for foliations is reminiscent of a whole series of works initiated by Furstenberg (see for example \cite{Furst,Ledexp,KleptsynNalski}) and culminating with Avila-Viana's invariance principle \cite{AvilaViana}, which develops the following general principle. When composing randomly homeomorphisms of certain manifolds (for example the circle) either a probability measure is globally preserved, or there is some contraction in the dynamics.

Deroin and Kleptsyn were the first to include foliations inside the family of dynamical systems exhibiting this feature. In a wonderful paper \cite{DeroinKleptsyn}, which motivates the present paper, they proved Theorem \ref{SRBmeasuresfgf} with Garnett's \emph{harmonic measures} for $\F$ (see \cite{Garnett}) instead of SRB measures and they do not need the assumption on the sectional curvatures of the leaves. We also mention Forn{\ae}ss-Sibony's work on harmonic currents of laminations \cite{FoS}. In \cite{BGV} Bonatti, G{\'o}mez-Mont and Mart{\'i}nez showed how, in the case of foliations by \emph{hyperbolic manifolds}, to deduce Theorem \ref{SRBmeasuresfgf} from Deroin-Kleptsyn's result and from the bijective correspondence between harmonic measures and a special class of invariant measures called Gibbs $u$-states that we shall define below (see \cite{AlvarezHarmonic,BakhtinMartinez,Martinez}). Our proof of Theorem \ref{SRBmeasuresfgf} is independent of the study of the foliated Brownian motion and uses Pesin's theory of nonuniformly hyperbolic dynamical systems.

\paragraph{Foliations by surfaces --} It is not clear that for a given foliation there must exist an ambient Riemannian metric inducing negatively curved metrics in the leaves.

However, Ghys showed us a nice argument in order to prove that such a phenomenon is quite common in the world of two-dimensional foliations. Since it does not appear in the literature we propose to give in Appendix the proof the following result that we attribute to Ghys.

\begin{maintheorem}[Ghys]
\label{negativeurvature}
Let $(M,\F)$ be a closed manifold foliated by hyperbolic Riemann surfaces and $g$ be a smooth Riemannian metric on $M$. Then there exists a Riemannian metric $g'$ conformally equivalent to $g$ such that the Gaussian curvature of every leaf $L$ is negative for the restricted metric $g_{|L}'$.
\end{maintheorem}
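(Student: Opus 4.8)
The plan is to reduce Theorem~\ref{negativeurvature} to a leafwise analytic statement and then solve it with the leafwise Poincar\'e metric.

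\emph{Reduction.} Recall that in dimension two a conformal change $g'=e^{2u}g$, with $u\in C^{\infty}(M)$, transforms Gaussian curvature along each leaf by $K_{g'}=e^{-2u}\bigl(K_g-\DeltaF u\bigr)$, where $\DeltaF$ is the leafwise Laplace--Beltrami operator of $g$ (the leafwise divergence of the leafwise gradient) and $K_g\colon M\to\R$ is the smooth function equal to the Gaussian curvature of $g_{|L}$ on each leaf $L$. Hence the theorem is equivalent to producing $u\in C^{\infty}(M)$ with $\DeltaF u>K_g$ everywhere on $M$.

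\emph{The Poincar\'e metric as a strict subsolution.} Every leaf $L$ is a hyperbolic Riemann surface, so it carries a complete conformal metric of curvature $-1$, its Poincar\'e metric; writing it $\rho_L=e^{2\varphi}g_{|L}$ defines $\varphi\colon M\to\R$, smooth along leaves, and the equation $K_{\rho_L}\equiv-1$ reads, leafwise,
\[
\DeltaF\varphi=K_g+e^{2\varphi}.
\]
Thus $\DeltaF\varphi-K_g=e^{2\varphi}>0$: the function $\varphi$ is a \emph{strict} leafwise subsolution of our equation. The point to be established is that $\varphi$ is \emph{continuous}, hence bounded, on the compact manifold $M$. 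One bound, $\varphi\le\mathrm{const}$, is immediate: a plaque embeds conformally into its leaf, so by monotonicity of the Poincar\'e metric the leafwise Poincar\'e density is bounded above, while $g$ has bounded geometry. The reverse bound $\varphi\ge\mathrm{const}$ and the continuity require that no leaf degenerate conformally; this is a normal-families argument using compactness of $M$, or one invokes Candel's uniformization theorem, which applies precisely because all leaves are hyperbolic. Granting this, $\varphi$ is bounded, leafwise elliptic regularity applied to the displayed equation gives uniform bounds on all leafwise $C^{k}$-norms of $\varphi$, and $e^{2\varphi}\ge\delta$ for some $\delta>0$.

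\emph{Regularization and conclusion.} The function $\varphi$ is smooth along leaves but only continuous transversally, so I would mollify it transversally: in a finite atlas of foliated charts with a subordinate partition of unity, set $u=\varphi*\kappa_{\eps}$, the average of $\varphi$ over a transverse $\eps$-ball against a smooth kernel. Since the plaque identifications in a foliated chart are smooth and $\varphi$ is smooth along leaves, $u\in C^{\infty}(M)$. Transverse translation is not a leafwise isometry, so $\DeltaF$ does not commute with this averaging; but, because $g_{|L}$ depends smoothly on the transverse coordinate and $\varphi$ has uniformly bounded leafwise $C^{2}$-norm, the resulting commutator is $O(\eps)$ in the $C^{0}$-norm. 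Therefore
\[
\DeltaF u-K_g=\bigl(\DeltaF\varphi-K_g\bigr)*\kappa_{\eps}+O(\eps)=e^{2\varphi}*\kappa_{\eps}+O(\eps)\ge\delta-C\eps>0
\]
once $\eps$ is small, and then $g'=e^{2u}g$ has $K_{g'}=e^{-2u}(K_g-\DeltaF u)<0$ on every leaf.

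\emph{Main obstacle.} The reduction and the regularization are soft; the heart of the matter is the global regularity of the leafwise Poincar\'e metric, i.e. the boundedness (indeed continuity) of $\varphi$ on the compact $M$, equivalently the absence of conformally degenerating sequences of leaves, which is exactly where compactness and the hyperbolicity of every leaf are used. (An alternative to the last two steps: by Hahn--Banach such a $u$ exists unless there is a probability measure $\mu$ on $M$ annihilating $\DeltaF\bigl(C^{\infty}(M)\bigr)$, i.e. a harmonic measure in Garnett's sense \cite{Garnett}, with $\int_M K_g\,d\mu\ge0$; once $\varphi$ is known to be bounded one deduces $\int_M K_g\,d\mu<0$ for every harmonic measure from the identity above, so no such $\mu$ exists.)
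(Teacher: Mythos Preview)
Your argument is correct, but it takes a different route from the paper's.

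The paper's proof is the pure Hahn--Banach/Sullivan argument you sketch in your final parenthetical remark: one shows that the image of $\kappa$ in $C^0(M)/\overline{\DeltaF(C^\infty(M))}$ lies in the projection of the open cone of negative functions, and if not, Hahn--Banach produces a harmonic measure $m$ with $\int_M\kappa\,dm\ge 0$, contradicting Ghys' foliated Gauss--Bonnet theorem $\int_M\kappa\,dm<0$, which is quoted as a black box. No Poincar\'e metric and no mollification appear.

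Your main line is instead constructive: invoke Candel's uniformization to get the leafwise Poincar\'e density $e^{2\varphi}$ with $\varphi$ continuous (indeed transversally continuous with all leafwise derivatives), use the curvature equation to see that $\varphi$ is a strict leafwise subsolution with a uniform margin $e^{2\varphi}\ge\delta>0$, and then mollify transversally to upgrade $\varphi$ to a smooth $u$. This is legitimate; two comments are worth making. First, for the mollified $u$ to be genuinely $C^\infty$ on $M$ you need not only uniform leafwise $C^k$ bounds on $\varphi$ but transverse continuity of those leafwise derivatives, which is precisely the content of Candel's theorem (uniform bounds alone do not give smoothness after one-sided convolution). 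Second, your commutator estimate $[\DeltaF,\ast\kappa_\eps]=O(\eps)$ is fine once that transverse continuity is in hand, but when you glue charts with a partition of unity you should also control the extra terms coming from $\DeltaF$ hitting the cutoff functions; these are again $O(1)$ times leafwise first derivatives of $\varphi$, hence harmless.

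What each approach buys: the paper's argument is short and uses only soft functional analysis once one accepts Ghys' Gauss--Bonnet inequality, but it is nonconstructive. Your argument is more hands-on and actually exhibits $u$, at the cost of importing Candel's theorem, which is a substantially deeper input. It is also worth noticing that your identity $\DeltaF\varphi=K_g+e^{2\varphi}$, integrated against a harmonic measure (which annihilates $\DeltaF\varphi$ since $\varphi\in C^{0,2}(M)$ by Candel), \emph{reproves} Ghys' Gauss--Bonnet inequality; so your parenthetical ``alternative'' is not really an alternative but a repackaging of the same input, and in fact this is how that inequality is often proved.
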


We shall define in Appendix what is a foliation by hyperbolic surfaces. Many examples may be found in \cite{ADMVfol}. Let us emphasize that every two-dimensional foliation without transverse invariant measure must be a foliation by hyperbolic surfaces: see Proposition \ref{hyptype}. As a consequence we deduce that for every transversally conformal two-dimensional foliation the following dichotomy holds true.

\begin{itemize}
\item Either it has a transverse holonomy-invariant measure.
\item Or there exists a smooth Riemannian metric of the ambient space such that every leaf is negatively curved for the restricted metric. For such a metric the second alternative of Theorem \ref{SRBmeasuresfgf} holds true.
\end{itemize}

\paragraph{Partially hyperbolic examples --} We now raise the problem of the relation between partial and foliated hyperbolicities. We illustrate it by a detailed study of special examples, namely foliations transverse to a circle bundle over a hyperbolic surface with projective holonomy. We propose a link between partial hyperbolicity of the foliated geodesic flow and a purely topological condition on the bundle: the value of its Euler number.

Recall that circle bundles over a closed surface $\Sigma$ of genus $\g$ are classified by an integer, their \emph{Euler number}, and that those admitting a transverse foliation are precisely those whose Euler number is, in absolute value, less than $2\g-2$ (this is Milnor-Wood's inequality for which we refer to \cite{Milnor,Wood}). The Euler number of a circle bundle $\Pi:M\to\Sigma$ is denoted by $\Eu(\Pi)$.

A smooth foliation $\F$ transverse to a circle bundle $\Pi:M\to\Sigma$ is obtained from its \emph{holonomy representation} $\hol:\pi_1(\Sigma)\to\dif^{\infty}(S^1)$ by a process called \emph{suspension} (see \cite{CL}).  When the foliated bundle has a \emph{projective holonomy group} (i.e. each fibers is identified with the real projective line $\R\PP^1$ and the holonomy representation takes its values in the group $\PSL_2(\R)$ of projective transformations of $\R\PP^1$) we say that the data $(\Pi,M,\Sigma,\F)$ is a \emph{foliated $\R\PP^1$-bundle with projective holonomy}. 

Let $(\Pi,M,\Sigma,\F)$ be a foliated $\R\PP^1$-bundle with projective holonomy and suppose $\Sigma$ is endowed with a hyperbolic metric $m$. Say a smooth Riemannian metric $g$ on $M$ is \emph{admissible} if for every leaf $L$ the restriction $\Pi_{|L}:(L,g_{|L})\to(\Sigma,m)$ is a Riemannian cover. In that case $G_t$ preserves the fibers of the bundle $\Pi_{\star}:\hM\to T^1\Sigma$ induced by the differential of $\Pi$, and the fiber direction, which as we shall prove is invariant by the flow, is a good candidate for being the central direction.

Our next result provides a topological condition on a circle foliated bundle with projective holonomy to possess a partially hyperbolic foliated geodesic flow (we refer to \S \ref{phflows} for the definition of partially hyperbolic flows). This provides new geometric examples of partially hyperbolic dynamical systems.

\begin{maintheorem}
\label{domination}
Let $\Sigma$ be a closed surface of genus $\g\geq 2$.

\begin{enumerate}
\item If $(\Pi,M,\Sigma,\F)$ is a foliated $\R\PP^1$-bundle with projective holonomy satisfying $|\Eu(\Pi)|<2\g-2$ then there exists a hyperbolic metric on $\Sigma$ such that for every admissible Riemannian metric on $M$ the foliated geodesic flow of $\F$ is partially hyperbolic.
\item For every hyperbolic metric on $\Sigma$ there exists a foliated $\R\PP^1$-bundle with projective holonomy such that for every admissible Riemannian metric on $M$ the foliated geodesic flow $G_t$ is partially hyperbolic. Moreover $\Eu(\Pi)$ can be made arbitrary in $\{3-2\g,...,0,...,2\g-3\}$.
\item If $(\Pi,M,\Sigma,\F)$ is a foliated $\R\PP^1$-bundle with projective holonomy satisfying $|\Eu(\Pi)|=2\g-2$ and $\Sigma$ is endowed with any hyperbolic metric then for every admissible Riemannian metric on $M$ the foliated geodesic flow of $\F$ is not partially hyperbolic.
\end{enumerate}
\end{maintheorem}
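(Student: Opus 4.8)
The plan is to reduce the question of partial hyperbolicity to a comparison between the transverse Lyapunov behaviour (fiber direction) and the leafwise hyperbolic rates. Since the leaves cover $(\Sigma,m)$ isometrically, the leafwise stable/unstable bundles $\W^s,\W^u$ are contracted and expanded at rates $e^{\pm t}$ (the horocycle rates of the hyperbolic plane), uniformly, so the central direction $\E^c$ — which I first need to identify with the tangent direction to the fibers of $\Pi_\star:\hM\to T^1\Sigma$ — must be shown to be $G_t$-invariant and to have expansion/contraction rates strictly trapped between $e^{-t}$ and $e^{t}$. Concretely: lift to the universal cover, where $\hM$ is identified with $T^1\Hyp\times\R\PP^1$ and $G_t$ acts as (geodesic flow on $T^1\Hyp)\times(\text{cocycle into }\PSL_2(\R))$; the cocycle over a geodesic of $T^1\Hyp$ is, up to the foliated structure, multiplication by the holonomy along that geodesic. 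The derivative of $G_t$ in the fiber direction at a point $(v,\xi)$ is then the derivative at $\xi$ of a product of Möbius maps, and its norm is governed by the \emph{transverse Lyapunov exponent} $\lambda^{\tr}$ of the relevant invariant measures, together with a uniform (non-averaged) bound.

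For part (1): given $|\Eu(\Pi)|<2\g-2$, the holonomy representation $\hol:\pi_1(\Sigma)\to\PSL_2(\R)$ has Euler number (as a bounded cohomology class) of absolute value $<2\g-2$, so by Goldman's theorem it is \emph{not} Fuchsian; in particular it is not conjugate to a cocompact lattice embedding, and one can then invoke the fact (this is the heart of the matter) that the foliated $\R\PP^1$-bundle is \emph{fiberwise contracting on average} — equivalently, that there is no transverse invariant measure and hence, by Theorem \ref{negativeexponent}, every Gibbs $u$-state has $\lambda^{\tr}<0$, and symmetrically every Gibbs $s$-state has positive transverse exponent. What I actually need is a \emph{uniform} (all-orbits, all-times) domination, not just an a.e. one, so I would upgrade this: choose the hyperbolic metric $m$ on $\Sigma$ carefully (using the freedom Goldman's parametrization of the component of $\hol$ gives), so that the \emph{pointwise} fiber derivative of $G_1$ satisfies $e^{-1}<\|D G_1|_{\E^c}\|<e$ everywhere, with a spectral gap. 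The key estimate is that the log-derivative of a Möbius transformation at a point equals $-2\log|cz+d|$ in the upper-half-plane model, which is uniformly bounded on the compact base, so after rescaling the metric the central rates lie in a band $[e^{-1+\delta},e^{1-\delta}]$, giving domination $\E^s\prec\E^c\prec\E^u$ over $G_1$, hence partial hyperbolicity of the flow (with $\E^c\oplus\langle X\rangle$ playing the role of the center, $X$ the flow direction).

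For part (2): I run the construction in reverse. Fix the hyperbolic metric $m$ on $\Sigma$. I want a representation $\hol:\pi_1(\Sigma)\to\PSL_2(\R)$ whose suspension has a uniformly dominated fiber direction and prescribed Euler number $k\in\{3-2\g,\dots,2\g-3\}$. Take $\hol$ to be a small perturbation of a representation into a one-parameter elliptic or hyperbolic subgroup — more robustly, take a product/amalgam construction or a deformation of a reducible representation — arranged so that (i) the image is a discrete (or at least \emph{proximal, expanding}) subgroup whose action on $\R\PP^1$ contracts strongly towards a finite invariant set, forcing the pointwise fiber contraction to beat $e^{-1}$, and (ii) the Euler number, which is a homotopy invariant locally constant on the representation variety and takes every integer value in the Milnor–Wood range across the connected components, equals $k$. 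One clean way: start from the holonomy of a hyperbolic orbifold or a Fuchsian representation of a \emph{subsurface}, collapse the complementary handles, so the Euler number drops to $k$; these representations are non-Fuchsian (for $|k|<2\g-2$), so the dichotomy of Theorem \ref{SRBmeasuresfgf} puts us in the contracting alternative, and a compactness argument on $T^1\Sigma$ promotes the a.e. negativity of $\lambda^{\tr}$ to the uniform band needed for domination. For any admissible $g$ on $M$ the leafwise rates are still exactly $e^{\pm t}$ (Riemannian cover!), so admissibility of $g$ never interferes — only the fiber direction, intrinsic to the bundle, matters.

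The main obstacle I expect is precisely the \textbf{passage from non-uniform to uniform hyperbolicity}: Theorem \ref{negativeexponent} only gives $\lambda^{\tr}<0$ for invariant measures, whereas partial hyperbolicity is an everywhere-statement about the cocycle. Closing that gap requires either (a) exhibiting an explicit metric $m$ making the fiber log-derivative of $G_1$ literally bounded away from $\pm 1$ pointwise — feasible because the fiber cocycle is, over $T^1\Sigma$, a \emph{projective cocycle with bounded generators}, so its pointwise norm is controlled by the hyperbolic displacement which is uniformly $1$ per unit time — or (b) a uniform-contraction criterion à la the Avila–Viana / Bonatti–Viana machinery (a finite iterate simultaneously contracting all fibers). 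I would pursue (a), since the explicit $-2\log|cz+d|$ formula makes the central band computation completely mechanical once the lattice/representation is fixed, and the freedom in choosing the hyperbolic metric in part (1) is exactly what lets one tune the band to be strictly inside $(e^{-1},e^{1})$.
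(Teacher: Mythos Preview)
Your proposal correctly identifies the central difficulty --- passing from non-uniform (measure-theoretic) control of the fiber direction to the uniform, pointwise domination required for partial hyperbolicity --- but the route you outline to close this gap does not work, and the missing ingredient is substantial.

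The paper's proof rests on a concept you do not mention: \emph{domination of representations}. One says that a Fuchsian $\rho$ dominates $\hol$ if there is $\kappa<1$ with $l_{\hol(\gamma)}\leq\kappa\,l_{\rho(\gamma)}$ for all $\gamma\in\pi_1(\Sigma)$, where $l_P$ is the translation length in $\Hyp$. Theorem~\ref{equivalencedomination} shows that this condition is \emph{equivalent} to partial hyperbolicity of $G_t$ for an admissible metric associated to $\rho$. The forward implication goes: translation-length domination controls the transverse Lyapunov exponent on every \emph{periodic} orbit (Lemma~\ref{dominationexponents}); Katok's closing lemma then propagates the bound $|\lambda^{\tr}|\leq\kappa$ to every ergodic invariant measure; finally Ma\~n\'e's criterion (Lemma~\ref{criteriondomination}) converts this spectral bound on all invariant measures into genuine domination of the splitting. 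Theorem~\ref{domination} itself then follows by invoking the Gu\'eritaud--Kassel--Wolff / Deroin--Tholozan theorem (Theorem~\ref{dominationGKWDT}), a nontrivial input from Anti-de Sitter geometry: every non-Fuchsian $\hol$ is dominated by \emph{some} Fuchsian $\rho$, and conversely every Fuchsian $\rho$ dominates some $\hol$ of any prescribed non-extremal Euler number.

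Your approach (a) cannot substitute for this. The formula $-2\log|cz+d|$ bounds the fiber derivative of a \emph{fixed} M\"obius map, but over a closed geodesic corresponding to $\gamma$ the fiber cocycle is $\hol(\gamma)^{-1}$, whose derivative at a fixed point has logarithm $\pm l_{\hol(\gamma)}$; this is \emph{not} a priori bounded by $l_{\rho(\gamma)}$, and arranging it to be so for \emph{every} $\gamma$ is exactly the content of the GKW/DT theorem. Your sentence ``the pointwise norm is controlled by the hyperbolic displacement which is uniformly $1$ per unit time'' conflates the displacement of the base geodesic (which is indeed $t$) with the displacement effected by the fiber cocycle (which is $l_{\hol(\gamma)}$ over a period, and can exceed $l_{\rho(\gamma)}$). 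The freedom to choose $m$ is real, but exploiting it is precisely the hard theorem you need to cite. Your approach (b) is in spirit the right one --- it is Ma\~n\'e's criterion --- but feeding it requires the uniform bound $|\lambda^{\tr}(\mu)|\leq\kappa<1$ for \emph{all} invariant $\mu$, not just Gibbs $u$-states, and Theorem~\ref{negativeexponent} alone does not supply this.
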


This result is a consequence of a result coming from the field of $3$-dimensional Anti-de Sitter geometry proven recently and independently by Gu{\'e}ritaud-Kassel-Wolff in \cite{GKW} and Deroin-Tholozan in \cite{DeroinTholozan}, as well as Theorem \ref{equivalencedomination} which shall be stated and proven in \S \ref{suspensioncocycles}

\paragraph{Outline of the work --} In Section \ref{sectionpreliminaries} we give the necessary material which will be used throughout the text. Section \ref{mostly_contracting} is devoted to the proof of Theorem \ref{negativeexponent}, the main technical result used to prove Theorem \ref{SRBmeasuresfgf}. In Section \ref{finiteness} we finish the proof of Theorem \ref{SRBmeasuresfgf}. Section \ref{phfoliatedgeoflows} is devoted to the presentation of the notion of domination of projective representation and to proving Theorem \ref{equivalencedomination}, the main technical result used to prove Theorem \ref{domination}. Fuchsian foliations are treated in Section \ref{fuchsianfoliations} and Theorem \ref{transversefuchsian}, which gives the transverse Lyapunov exponent of the unique SRB measure in that case, is stated and proven there. In Appendix, we give Ghys' argument for proving Theorem \ref{negativeurvature}.

\section{Preliminaries}\label{sectionpreliminaries}

\subsection{Transversally conformal foliations}

\paragraph{Notations and definitions --}Let $\F$  be a smooth \emph{foliation} of a smooth closed manifold $M$ with codimension $q$. Denote by $\Pp$ the \emph{holonomy pseudogroup} of $\F$ associated to a foliated atlas. It consists of local diffeomorphisms of $\R^q$.

The set of vectors tangent to $\F$ is a subbundle $T\F\dans TM$ called the \emph{tangent bundle} of $\F$. The \emph{normal bundle} of $\F$ is by definition $\NN_{\F}=TM/T\F$. The choice of a smooth Riemannian metric of $M$ identifies $\NN_{\F}$ with $T\F^{\perp}$.

\paragraph{Transversally conformal foliations --} Say the foliation $\F$ is \emph{transversally conformal} if elements of $\Pp$ are conformal transformations of $\R^q$, meaning that their derivatives are everywhere similitudes of the Euclidian space.

If one prefers that means that there exists $|.|$, a \emph{transverse metric} for $\F$, such that for every path $c$ tangent to $\F$, every $x$ inside the domain of $\h_c$, a holonomy map along $c$, and every $v\in\NN_{\F}(x)$, one has
$$|D_x\h_c(v)|=\lambda|v|,$$
for some $\lambda>0$ independent of $v$.

\begin{rem}
Every codimension $1$ foliation is transversally conformal.
\end{rem}

\subsection{Invariant measures}
\label{subsec_invariant}

\paragraph{Transverse  holonomy-invariant measures --} Let $(T_i)_{i\in I}$ be a complete system of transversals to the foliation, i.e. a finite family of transversals to $\F$ whose union meets every leaf. A \emph{transverse holonomy-invariant measure} (or simply transverse invariant measure) is a family of finite nonnegative measures $(\nu_i)_{i\in I}$ satisfying

\begin{enumerate}
\item $\nu_i(T_i)>0$ for some $i\in I$;
\item if for $i,j\in I$ there is a holonomy map $\h:S_i\to S_j$ between two open sets $S_i\dans T_i$ and $S_j\dans T_j$, then for any Borel set $A_i\dans S_i$ we have $\nu_i(A_i)=\nu_j(\h(A_i))$.
\end{enumerate}

\paragraph{Totally invariant measures --} The Riemannian metric induces a Riemannian structure on each leaf. Hence every leaf is endowed with a natural volume form. Assume that the foliation $\F$ possesses a transverse invariant measure $(\nu_i)_{i\in I}$. Then, if $\nu_i(T_i)>0$, it is possible inside a corresponding foliated chart $U_i$ to integrate the volume of the plaques (i.e. the connected components of the intersections of leaves of $\F$ with $U_i$) against $\nu_i$. We obtain this way a measure $m_i$ in the chart $U_i$.

The family of measures $(\nu_i)_{i\in I}$ is holonomy-invariant, so the $m_i$ glue together and provide a finite measure $m$ on $M$. Such a measure will be from now one called \emph{totally invariant}.

\subsection{The foliated geodesic flow and foliated hyperbolicity}
In what follows we assume the existence of a smooth Riemannian metric $g$ on $M$ such that the sectional curvature of every leaf $L$ for the restricted metric $g_{L}$ is negative. By compactness of $M$ this implies that the sectional curvatures of every leaf $L$ are uniformly pinched between two negative constants $-b^2<-a^2<0$.

\subsubsection{The foliated geodesic flow}
\paragraph{Unit tangent bundle --} Let $\hM$ denote the \emph{unit tangent bundle of $\F$}, i.e. the subbundle of $T^1M$ consisting of those unit vectors tangent to $\F$. It is a closed manifold endowed with a smooth foliation denoted by $\hcF$ whose leaves are the unit tangent bundles of leaves of $\F$. We will denote by $pr:\hM\to M$ the canonical \emph{basepoint projection} associating its basepoint to each vector $v\in\hM$. The Sasaki metric induces a metric on $\hM$ by $\hg$. We will denote by $\hL_v$ the leaf of $v\in\hM$ i.e. $T^1L_x$ where $L_x$ is the leaf of the basepoint $x$ of $v$. Note that the foliations $\F$ and $\hcF$ have the same holonomy pseudogroups (see  \cite[\S 3.1]{Alvarezu-Gibbs}) so the following proposition holds.

\begin{proposition}
\label{holFhcF}
There exists a transverse invariant measure for $\F$ if and only if there exists one for $\hcF$. The foliation $\F$ is transversally conformal if and only if $\hcF$ is transversally conformal.
\end{proposition}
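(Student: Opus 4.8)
The plan is to make precise the remark that precedes the statement: the basepoint projection $pr\colon\hM\to M$ pulls back a foliated atlas of $\F$ to a foliated atlas of $\hcF$ having \emph{literally the same holonomy maps}, so that $\F$ and $\hcF$ define one and the same holonomy pseudogroup $\Pp$ (acting on one and the same family of model transversals); both assertions of the proposition then become properties of $\Pp$ alone. This is carried out in detail in \cite[\S 3.1]{Alvarezu-Gibbs}, so I would only sketch it.

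First I would fix a foliated atlas $(U_i,\gamma_i)_{i\in I}$ of $\F$, with holonomy maps $\tau_{ij}$ and with each $U_i$ a product chart, set $\hat U_i=pr^{-1}(U_i)$ and $\hat\gamma_i=\gamma_i\circ pr\colon\hat U_i\to\R^q$, and check that $(\hat U_i,\hat\gamma_i)_{i\in I}$ is a foliated atlas for $\hcF$. Indeed the $\hat U_i$ cover $\hM$; each $\hat\gamma_i$ is a submersion, being the composition of the submersions $pr$ (a locally trivial bundle projection) and $\gamma_i$; the fibre $\hat\gamma_i^{-1}(t)=pr^{-1}(\gamma_i^{-1}(t))$ is the unit tangent bundle of a plaque of $\F$, which, since the leaves have dimension at least two, is connected, hence is exactly a plaque of $\hcF$; and on $\hat U_i\cap\hat U_j=pr^{-1}(U_i\cap U_j)$ one has $\hat\gamma_j=\gamma_j\circ pr=\tau_{ij}\circ\gamma_i\circ pr=\tau_{ij}\circ\hat\gamma_i$. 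Thus the new atlas has the same cocycle $(\tau_{ij})_{i,j}$, and $\F$ and $\hcF$ share the holonomy pseudogroup $\Pp$.

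The statement about transverse conformality is then immediate: $\F$ is transversally conformal exactly when every $\tau_{ij}$ is a conformal transformation of $\R^q$, a condition on $\Pp$ and not on the particular foliation producing it; hence $\F$ is transversally conformal if and only if $\hcF$ is. For the statement about transverse invariant measures I would transport measures through the distinguished maps. In each $U_i$ choose a local transversal $T_i$ on which $\gamma_i$ restricts to a diffeomorphism onto an open set $\Omega_i\subset\R^q$, and let $\hat T_i\subset\hat U_i$ be its lift by a smooth unit vector field tangent to $\F$ along $T_i$, so that $pr\colon\hat T_i\to T_i$ and $\hat\gamma_i|_{\hat T_i}\colon\hat T_i\to\Omega_i$ are diffeomorphisms. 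Since $(T_i)_{i\in I}$ is complete for $\F$ and every leaf of $\hcF$ projects onto a leaf of $\F$, the family $(\hat T_i)_{i\in I}$ is complete for $\hcF$. Using $\gamma_i\colon T_i\to\Omega_i$ and $\hat\gamma_i\colon\hat T_i\to\Omega_i$ to identify $T_i$, $\hat T_i$ and $\Omega_i$, a family of finite measures $(\nu_i)_{i\in I}$ on $(T_i)_{i\in I}$ is holonomy invariant for $\F$ if and only if the transported family on the $\Omega_i$ is $\Pp$-invariant, if and only if its lift on $(\hat T_i)_{i\in I}$ is holonomy invariant for $\hcF$, the nontriviality condition $\nu_{i_0}(T_{i_0})>0$ being preserved. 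This gives a bijection between transverse invariant measures for $\F$ and for $\hcF$, which proves the first assertion.

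I expect no serious obstacle; the one point that deserves attention is the verification that the pulled-back charts $(\hat U_i,\hat\gamma_i)$ genuinely define $\hcF$ rather than some coarser partition, i.e. that the fibres of $\hat\gamma_i$ are connected plaques, which is exactly where one uses that $\dim\F\ge 2$ (in general one refines the atlas). The rest is a routine unwinding of the definitions of Section \ref{sectionpreliminaries}.
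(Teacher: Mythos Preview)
Your proposal is correct and follows precisely the approach indicated in the paper: the paper does not give a proof of this proposition, but simply notes that $\F$ and $\hcF$ have the same holonomy pseudogroup (referring to \cite[\S 3.1]{Alvarezu-Gibbs} for the details of lifting a foliated atlas via $pr$), and states the proposition as an immediate consequence. Your write-up supplies exactly those details, including the point about connectedness of the fibres of $\hat\gamma_i$ when $\dim\F\ge 2$, which is implicit in the paper's standing hypotheses.
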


\paragraph{Foliated geodesic flow --} We call \emph{foliated geodesic flow} the leaf-preserving flow of $(\hM,\hcF)$ which induces on each leaf $\hL$ its geodesic flow. This flow will be denoted by $(t,v)\in\R\times\hM\mapsto G_t(v)$. It is smooth since the leafwise geodesic equation depends only on the $1$-jet of the restricted metric $g_L$.

\subsubsection{Foliated hyperbolicity}

When all sectional curvatures are negative the foliated geodesic flow exhibit a weak form of hyperbolicity called \emph{foliated hyperbolicity} in \cite{BGM}. Following \cite[Chapter IV]{Ba},  one proves that there are two continuous and $DG_t$-invariant subbundles of $T\hcF$ of the same dimension $p-1$ ($p$ being the dimension of $\F$) denoted by $E^s$ and $E^u$, which satisfy $T\hcF=E^s\oplus\R X\oplus E^u$ ($X$ being the generator of $G_t$) and are respectively uniformly contracted and expanded. They are called \emph{stable and unstable bundles}.

As explained in \cite{Alvarezu-Gibbs,BGM} the usual \emph{Stable Manifold Theorem} applies in that case and these bundles are uniquely integrable. We denote by $\W^s$ and $\W^u$ the two continuous and $G_t$-invariant subfoliations of $\hcF$ tangent to the distributions $E^s$ and $E^u$.  We call them the \emph{stable and unstable foliations}. Their leaves are denoted by  $W^s(v)$ and $W^u(v)$ and are called stable and unstable manifolds. The saturations of $\W^s$ and $\W^u$ by $G_t$ form two continuous subfoliations of $\hcF$ tangent to $E^{cs}$ and $E^{cu}$ called \emph{center-stable and center-unstable foliations} and denoted by $\W^{cs}$ and $\W^{cu}$. The leaves of $\W^{cs}$ and $\W^{cu}$ passing through $v\in\hM$ are denoted by $W^{cs}(v)$ and $W^{cu}(v)$ and called respectively the \emph{center-stable} and \emph{center-unstable manifolds} of $v$.

\subsection{Lyapunov exponents}

Many bundles are involved in the theory of foliated hyperbolicity. Hence we found useful to include a detailed discussion about Oseledets' splitting. In particular we want to prove that the transverse Lyapunov exponent (see \S \ref{transverselyapunovsection}) is a  ``classical'' Lyapunov exponents for $G_t$.

\subsubsection{Transverse Lyapunov exponent}
\label{transverselyapunovsection}

The linear holonomy over the foliated geodesic flow defines a linear cocycle on $\NN_{\hcF}$ over $G_t$, so we can apply Oseledets' theorem to that cocycle. Using that the foliation $\hcF$ is transversally conformal (see Proposition \ref{holFhcF}) we obtain the following

\begin{proposition}
There exists a Borel set $\X_0\dans\hM$ which is $G_t$-invariant and full for every $G_t$-invariant measure such that for every $v\in\X_0$ and $\omega\in \NN_{\hcF}(v)$ the following number is well defined and idependent of $\omega$ and of the transverse metric $|.|$
$$\lambda^{\tr}(v)=\lim_{t\to\infty}\frac{1}{t}\log |D_v\h_{G_{[0,t]}(v)}\omega|$$
where $\h_{G_{[0,t]}(v)}$ denotes the holnomy map along the orbit segment $G_{[0,t]}(v)$.
\end{proposition}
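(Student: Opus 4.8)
The statement is an instance of Oseledets' multiplicative ergodic theorem applied to the linear holonomy cocycle over $G_t$, but transversal conformality collapses that matrix‑valued cocycle to a scalar one, so the plan is to argue directly with Birkhoff's ergodic theorem. By Proposition~\ref{holFhcF} the foliation $\hcF$ is transversally conformal; fix a transverse metric $|.|$ for it. For $v\in\hM$ and $t\in\R$ the linear holonomy $D_v\h_{G_{[0,t]}(v)}\colon\NN_{\hcF}(v)\to\NN_{\hcF}(G_t(v))$ is a similitude for $|.|$, so there is a positive number $\lambda(t,v)$ with $|D_v\h_{G_{[0,t]}(v)}\omega|=\lambda(t,v)\,|\omega|$ for \emph{every} $\omega$. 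Hence $\frac1t\log|D_v\h_{G_{[0,t]}(v)}\omega|=\frac1t\log\lambda(t,v)+\frac1t\log|\omega|$, and since $\frac1t\log|\omega|\to0$ the limit defining $\lambda^{\tr}(v)$ — once we know it exists — is automatically independent of $\omega$. Independence of the transverse metric is equally formal: any other transverse metric has the form $f\,|.|$ for a continuous positive $f$ on the compact manifold $\hM$, so the conformal factor gets multiplied by $f(G_t(v))/f(v)$, and since $f$ is bounded away from $0$ and $\infty$ this contributes $\frac1t\log\big(f(G_t(v))/f(v)\big)\to0$.

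It remains to produce a single $G_t$‑invariant Borel set $\X_0$, full for every $G_t$‑invariant measure, on which $\frac1t\log\lambda(t,v)$ converges. The point is that $L(t,v):=\log\lambda(t,v)$ is an \emph{additive cocycle} over the flow, $L(t+s,v)=L(t,G_s(v))+L(s,v)$, by functoriality of holonomy and the chain rule; moreover $L$ is continuous in $(t,v)$ and in fact $C^1$ in $t$ along orbits, because the linear holonomy along the smooth curve $s\mapsto G_s(v)$ is the solution of a linear ODE (parallel transport for the Bott partial connection on $\NN_{\hcF}$) whose coefficients depend continuously on the curve. Thus $L(t,v)=\int_0^t\phi(G_s(v))\,ds$ where $\phi(w)=\frac{d}{dt}\big|_{t=0}L(t,w)$ is continuous, hence bounded, on $\hM$. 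I would then take
$$\X_0=\Big\{\,v\in\hM\ :\ \lim_{t\to+\infty}\frac1t\int_0^t\phi(G_s(v))\,ds\ \text{exists}\,\Big\}.$$
This is Borel by construction, it is invariant under every $G_\tau$ (a one‑line computation using $\int_0^t\phi\circ G_{s+\tau}\,ds=\int_\tau^{t+\tau}\phi\circ G_s\,ds$), and by Birkhoff's ergodic theorem for flows it has full measure for every $G_t$‑invariant probability measure; on it $\lambda^{\tr}(v)=\lim_{t\to\infty}\frac1t\int_0^t\phi(G_s(v))\,ds$, which by the previous paragraph is independent of $\omega$ and of $|.|$.

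I expect the only step needing real care — as opposed to the formal manipulations above — to be the regularity of the holonomy conformal factor: that the linear holonomy cocycle over $G_t$ is continuous, so that $\phi$ is a legitimate continuous integrand and $\X_0$ a genuine Borel set full for all invariant measures. If one wishes to avoid differentiating in $t$, there is an equivalent route: apply Birkhoff to the continuous time‑one cocycle $c(v)=L(1,v)$ over $G_1$, then pass to the flow and to real times using the uniform bound $\sup_{|s|\le1,\,w\in\hM}|L(s,w)|<\infty$, which follows from compactness of $\hM$ together with continuity of $L$ and also shows that the limit, when it exists, is finite. In either presentation it is transversal conformality — via Proposition~\ref{holFhcF} — that turns an a priori Oseledets‑type statement into a scalar Birkhoff average, and this is the substance of the proposition.
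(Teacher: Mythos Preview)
Your proof is correct and takes a more elementary route than the paper's. The paper simply invokes Oseledets' theorem for the linear holonomy cocycle on $\NN_{\hcF}$ and then observes that transversal conformality forces all Oseledets exponents to coincide (a similitude has all singular values equal). You instead use conformality \emph{first}, collapsing the matrix cocycle to the scalar $\lambda(t,v)$ and turning the question into a Birkhoff average of the continuous generator $\phi$ (or of the time-one increment $L(1,\cdot)$); this avoids Oseledets entirely and makes $\X_0$ explicit as a Birkhoff-convergence set. Both routes are short; yours is more self-contained, while the paper's slots directly into the Oseledets framework used in \S\ref{oselouille}--\ref{spectrouille}. One small imprecision: in codimension $q\geq 2$ the claim that ``any other transverse metric has the form $f\,|.|$'' is not justified --- two metrics for which holonomy acts by similitudes need not be pointwise proportional (take a foliation with trivial holonomy). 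The fix is the standard one you essentially already have: any two continuous bundle metrics on $\NN_{\hcF}$ over the compact $\hM$ are uniformly equivalent, so the ratio of the corresponding conformal factors is bounded independently of $t$ and its $\frac1t\log$ tends to zero.
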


\subsubsection{Oseledets' splitting}
\label{oselouille}

\paragraph{Oseledets' theorem --} Considering the linear cocycle given by the derivative of $G_1:\hM\to\hM$ we find a Borel set $\X_1\dans\hM$ full for every $G_t$-invariant measure such that for every $v\in\X_1$ there exists a splitting
\begin{equation}
\label{Oseledetssplitting}
T_v\hM=E_1(v)\oplus...\oplus E_k(v)
\end{equation}
which is $DG_t$-invariant and such that for every $i\in\{1,...,k\}$ and every $\omega\in E_i(v)$ the \emph{Lyapunov exponent}
$$\lambda_i(v)=\lim_{t\to\infty}\frac{1}{t}\log||D_vG_t \omega||$$
is well defined.

\paragraph{Stable and unstable Lyapunov exponents --} The bundles $E^s$ and $E^u$ are $DG_t$-invariant. Oseledets' theorem applied to the restriction of $DG_1$ to these bundles provides a Borel set $\X_2\dans\X_1$ full for every $G_t$-invariant measure such that for every $v\in\X_2$ there exist two splittings
$$E^s(v)=E_1^s(v)\oplus....\oplus E_{k_s}^s(v)$$
$$E^u(v)=E_1^u(v)\oplus...\oplus E_{k_u}^u(v)$$
which are $DG_t$-invariant and such that, when $\star=u$ or $s$, for every $i\in\{1,...,k_{\star}\}$ and $\omega\in E_i^{\star}(v)$ the Lyapunov exponent
$$\lambda_i^{\star}(v)=\lim_{t\to\infty}\frac{1}{t}\log||D_vG_t \omega||$$
is well defined.

Note that these numbers belong to the family of Lyapunov exponents $(\lambda_1(v),...,\lambda_k(v))$. Moreover the ``tangential'' Lyapunov exponents at $v\in\X_2$ of $G_t$ (i.e. those of the linear cocycle $(DG_t)_{|T\hcF}$) are precisely $(\lambda_1^s(v),...,\lambda_{k_s}^s(v),0,\lambda_1^u(v),...,\lambda_{k_u}^u(v))$, where $0$ corresponds to the flow direction.

Moreover one has $\lambda_i^s(v)<0$ and $\lambda_j^u(v)>0$ for every $v\in\X_2$, $i\in\{1,...k_s\}$ and $j\in\{1,...,k_u\}$. Denote for $v\in\X_2$
\begin{equation}
\label{stablesumlyap}
\Lambda^s(v)=\sum_{i=1}^{k_s}\dim E_i^s(v)\,\,\,\lambda^s_i(v) <0
\end{equation}
and
\begin{equation}
\label{unstablesumlyap}
\Lambda^u(v)=\sum_{j=1}^{k_u}\dim E_j^u(v)\,\,\,\lambda^u_j(v) >0.
\end{equation}
Since the foliated geodesic flow preserves the Liouville measure inside the leaves it follows that
\begin{equation}
\label{sumstableunstable}
\Lambda^s(v)+\Lambda^u(v)=0.
\end{equation}

\subsubsection{Lyapunov spectrum}
\label{spectrouille}

\paragraph{Linear Poincar{\'e} flow --} Recall that $X$ denotes the generator of the foliated geodesic flow. This vector field is everywhere nonzero so we can define the normal bundle $\NN_X=X^{\perp}$. The \emph{linear Poincar{\'e} flow} $\Psi_t$ defines a linear cocycle of $\NN_X$ over $G_t$ (see \cite[\S 2.6]{ArPa}). Recall that for every $v\in\hM$, we have
$$\Psi_t(v)=\pi_{G_t(v)}\circ D_v G_t:\NN_X(v)\to\NN_X(G_t(v)),$$
where $\pi_v:T_v\hM\to\NN_X(v)$ denotes the orthogonal projection.

Oseledets' theorem applies for that cocycle and provides a Borel set $\X_3\dans\X_2$ full for every $G_t$-invariant measure such that for every $v\in\X_3$ there is a splitting
$$\NN_X(v)=F_1(v)\oplus...\oplus F_l(v)$$
which is $\Psi_t$-invariant and such that for every $i\in\{1,...,l\}$ and every $\omega\in\NN_X(v)$ the Lyapunov exponent
$$\chi_i(v)=\lim_{t\to\infty}\frac{1}{t}\log||\Psi_t(v)\omega||$$
is well defined. Moreover it is noted in \cite[\S 2.7.2.1]{ArPa} that since the variation of the angles between subspaces defining Oseledets splitting \eqref{Oseledetssplitting} is subexponential along the orbits of $G_t$, the Lyapunov exponents $\chi_i$ coincide with the Lyapunov exponents $\lambda_j$ \emph{off the flow direction} and the space $F_i$ coincide with the orthogonal projection to $\NN_X$ of some space $E_j$.

\paragraph{Lyapunov spectrum --} Now we are ready to conclude our discussion and to identify the Lyapunov spectrum of the foliated geodesic flow.

The metric $\hg$ identifies $\NN_{\hcF}$ and $T\hcF^{\perp}$. Note that since $X$ is tangent to $\hcF$ we have $\NN_{\hcF}\dans\NN_X$ and we can decompose $\NN_{X}=\NN_{\hcF}\oplus\NN'$ where $\NN'=\NN_X\cap T\hcF$ is a $\Psi_t$-invariant bundle. Lyapunov exponents of the cocycle $(\Psi_t)_{|\NN'}$ are precisely the stable and unstable Lyapunov exponents defined above. Define
$$\Psi^{\hcF}_t(v)=\pi_{G_t(v)}^{\hcF}\circ\Psi_t(v):\NN_{\hcF}(v)\to\NN_{\hcF}(G_t(v))$$
and argue as in the previous paragraph in order to get that the Lyapunov exponents of that cocycle are precisely the Lyapunov exponents of $\Psi_t$ \emph{off the $\NN'$ direction}.

Finally remark that the latter cocycle coincides with the linear holonomy over the foliated geodesic flow, i.e. for every $v\in\hM$
$$\Psi^{\hcF}_t(v)=D_v\h_{G_{[0,t]}(v)}$$
and hence that cocycle has only one Lyapunov exponent which is the \emph{transverse Lyapunov exponent} defined in \S \ref{transverselyapunovsection}. Hence the following proposition follows from our discussion.

\begin{proposition}
\label{Lyapspectrum}
There exists a Borel set $\X\dans\hM$ full for every  $G_t$-invariant measure such that for every $v\in\X$ the Lyapunov exponents of $G_t$ at $v$ exist and are precisely given by the list 
$$(\lambda^s_1(v),...,\lambda^s_{k_s}(v),0,\lambda_1^u(v),...,\lambda^u_{k_u}(v),\lambda^{\tr}(v)),$$
where a priori $\lambda^{\tr}$ can be equal to $\lambda^s_i,0$ or $\lambda^u_j$.
\end{proposition}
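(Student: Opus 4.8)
The plan is to formalize the discussion that precedes the statement: everything needed has essentially been assembled in \S\ref{oselouille} and \S\ref{spectrouille}, and the proof consists in organizing these pieces into a single Oseledets statement while taking care of the one genuinely delicate point (non-invariance of $\NN_{\hcF}$ under $DG_t$). First I would set $\X=\X_0\cap\X_3$, the intersection of the full-measure sets produced by all the Oseledets applications above, so that at every $v\in\X$ the cocycle $(D_vG_t)$ on $T_v\hM$, its restrictions to $E^s$ and $E^u$, the linear Poincar{\'e} cocycle $\Psi_t$ on $\NN_X$, the projected cocycle $\Psi^{\hcF}_t$ on $\NN_{\hcF}$, and the linear holonomy cocycle all admit simultaneous Oseledets splittings. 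The aim is then to identify the list of Lyapunov exponents of $(D_vG_t)$ on $T_v\hM$, counted with multiplicity, with $(\lambda^s_1(v),\dots,\lambda^s_{k_s}(v),0,\lambda^u_1(v),\dots,\lambda^u_{k_u}(v),\lambda^{\tr}(v))$.

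The tangential part is immediate. The subbundle $T\hcF$ is $DG_t$-invariant and splits as $E^s\oplus\R X\oplus E^u$ by \eqref{invsplitting}; by \S\ref{oselouille} the exponents of $(DG_t)_{|T\hcF}$ are exactly the $\lambda^s_i(v)<0$ on the Oseledets refinement of $E^s$, the single exponent $0$ along $\R X$, and the $\lambda^u_j(v)>0$ on the refinement of $E^u$. It remains to account for the exponents ``transverse to the leaves'', and since $\hcF$ is transversally conformal (Proposition \ref{holFhcF}) I expect these to collapse to the single value $\lambda^{\tr}(v)$.

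The step I would single out as the main obstacle is that $\NN_{\hcF}=T\hcF^{\perp}$ is \emph{not} $DG_t$-invariant, so one cannot simply restrict $DG_t$ to it; the resolution, exactly as recalled above and following \cite[\S 2.6--2.7]{ArPa}, is to pass through the linear Poincar{\'e} flow $\Psi_t$ on $\NN_X=X^{\perp}$, which \emph{is} an honest cocycle over $G_t$. I would check that $\NN'=\NN_X\cap T\hcF$ is $\Psi_t$-invariant with Lyapunov spectrum $\{\lambda^s_i(v)\}\cup\{\lambda^u_j(v)\}$ — this uses that the angle between $\R X$ and the Oseledets subspaces of $(DG_t)_{|T\hcF}$ varies subexponentially along orbits, so that orthogonally projecting out the flow direction alters neither the exponents nor, up to that projection, the Oseledets subspaces — and that the further projection $\Psi^{\hcF}_t$ to $\NN_{\hcF}$ carries the remaining exponents of $\Psi_t$, the same subexponential-angle argument justifying the passage from $\Psi_t$ to $\Psi^{\hcF}_t$. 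One then invokes the identification $\Psi^{\hcF}_t(v)=D_v\h_{G_{[0,t]}(v)}$ recalled above (a local comparison of orthogonal transport of normal vectors with holonomy transport along an orbit segment) together with transverse conformality: a conformal linear cocycle in dimension $q$ has a single Lyapunov exponent, which by the definition of \S\ref{transverselyapunovsection} is $\lambda^{\tr}(v)$. Finally, one more application of the subexponential-angle principle — the angle between $\NN_X$, resp. $\NN_{\hcF}$, and the complementary Oseledets subspaces of $(D_vG_t)$ is subexponential along orbits — shows that the full spectrum of $(D_vG_t)$ on $T_v\hM$ is the union of the spectrum of its tangential part and that of $\Psi^{\hcF}_t$, which yields the announced list. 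The concluding caveat that $\lambda^{\tr}$ may coincide with some $\lambda^s_i$, with $0$, or with some $\lambda^u_j$ simply reflects that Oseledets' theorem imposes no gap between the transverse exponent and the tangential ones.
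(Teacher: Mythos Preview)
Your proposal is correct and follows essentially the same approach as the paper: the proposition is stated there as a direct consequence of the discussion in \S\ref{oselouille}--\S\ref{spectrouille}, and you have simply organized that discussion into an explicit argument, correctly isolating the passage through the linear Poincar{\'e} flow (and its projection $\Psi^{\hcF}_t$) as the way around the non-invariance of $\NN_{\hcF}$ under $DG_t$.
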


\begin{defi}
We define the transverse Lyapunov exponent of a $G_t$-invariant measure $\mu$ as the integral
$$\lambda^{\tr}(\mu)=\int_{\hM}\lambda^{\tr}(v)\,d\mu(v).$$
The average sums $\Lambda^s(\mu)$ and $\Lambda^u(\mu)$ are defined similarly.
\end{defi}

\subsection{Gibbs $u$-states}
\label{defgubbsustates}

\paragraph{Definition --} A \emph{Gibbs $u$-state} for $G_t$ is a $G_t$-invariant probability measure on $\hM$ whose conditional measures in local unstable manifolds are equivalent to the Lebesgue measure.

A \emph{Gibbs $s$-state} for $G_t$ is a Gibbs $u$-state for $G_{-t}$: its conditional measures in local stable manifolds are equivalent to the Lebesgue measure.

A \emph{Gibbs $su$-state} is a probability measure on $\hM$ which is both a Gibbs $s$-state and a Gibbs $u$-state.

Gibbs $u$-states were first introduced by Pesin-Sina{\u\i} in \cite{PS} and then by Bonatti-Viana in \cite{BV} in the context of partially hyperbolic dynamics.

\paragraph{Example --} Suppose $\F$, and therefore $\hcF$ as well, has a family of transverse invariant measures $(\nu_i)_{i\in I}$. As explained in \S \ref{subsec_invariant} this measure can be combined with the Liouville measure in the plaques of $\hcF$ so as to construct a totally invariant measure $\mu$ on $\hM$.

Since $G_t$ preserves the Liouville measure of the leaves it must  preserve $\mu$. Moreover by absolute continuity of $\W^s$ and $\W^u$ (see \cite{Alvarezu-Gibbs}) it must have Lebesgue disintegration in both stable and unstable plaques. \emph{Such a measure is a Gibbs $su$-state.}

\paragraph{Existence --} Define the unstable Jacobian at $v\in\hM$ by $\Jac^u G_t(v)=\det (D_vG_t)_{|E^u(v)}.$ The proof of the next theorem follows the line of reasoning of \cite[Section 11.2.2]{BDV}. Recall that we say that a positive function (resp. a family of positive functions) is \emph{log-bounded} (resp. \emph{uniformly log-bounded}) if it is bounded away (resp. uniformly bounded away) from $0$ and $\infty$ (i.e. its logarithm is bounded).

\begin{theorem}
\label{reconstruirelesetatsdeugibbs}
Let $(M,\F)$ be a closed manifold endowed with a smooth Riemannian metric $g$ such that every leaf $L$ is negatively curved for the induced metric $g_{|}L$. Then

\begin{enumerate}
\item for every $v\in\hM$ and every Borel set $D^u\dans W^u_{loc}(v)$ with positive Lebesgue measure, any accumulation point of the following family of measures, indexed by $T\in(0,\infty)$, is a Gibbs u-state
$$\mu_T=\frac{1}{T}\int_0^T G_{t\ast}\left(\frac{\Leb^u_{|D^u}}{\Leb^u(D^u)}\right)dt.$$
Moreover its densities along unstable plaques, denoted by $\psi^u_w$, are uniformly log-bounded and satisfy the following for $z_1,z_2\in W^u_{loc}(w)$
\begin{equation}
\label{relationdensiteslocales}
\frac{\psi^u_w(z_2)}{\psi^u_w(z_1)}=\lim_{t\to\infty}\frac{\Jac^u G_{-t}(z_2)}{\Jac^u G_{-t}(z_1)};
\end{equation}
\item all ergodic components of a Gibbs u-state are Gibbs u-states with local densities in the unstable plaques that are uniformly log-bounded and satisfy \eqref{relationdensiteslocales};
\item every Gibbs u-state for $G_t$ is a measure whose local densities in the unstable plaques are uniformly log-bounded and satisfy \eqref{relationdensiteslocales}.
\end{enumerate}
\end{theorem}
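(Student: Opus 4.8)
The plan is to follow the standard Hopf-type argument for constructing Gibbs $u$-states (as in \cite[Section 11.2.2]{BDV}), adapted to the foliated setting, and to extract from the construction the identity \eqref{relationdensiteslocales} for the local densities. The single technical ingredient that makes everything work is the distortion control (Lemma \ref{distortioncontrol}): there exists a constant $C>0$ such that for every $v\in\hM$, every $t>0$ and every pair $z_1,z_2$ on the same local unstable plaque,
$$\left|\log\frac{\Jac^uG_{-t}(z_1)}{\Jac^uG_{-t}(z_2)}\right|\leq C\,\dist^u(G_{-t}(z_1),G_{-t}(z_2))^{\alpha}\leq C'$$
for some H\"older exponent $\alpha$, the bound being uniform because the $G_{-t}$-images of the plaque are uniformly contracted and the unstable Jacobian is H\"older along unstable leaves (here one uses the uniform pinching of the leafwise curvatures and the compactness of $\hM$). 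This simultaneously gives a uniform log-bound and, by letting $t\to\infty$, the existence of the limit in \eqref{relationdensiteslocales}.

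For item (1), I would argue as follows. Push forward the normalized Lebesgue measure on $D^u$ by $G_{s\ast}$; since $G_s$ maps unstable plaques into unstable plaques, the pushed-forward measure, restricted to a foliated box, is supported on a union of unstable plaques and has a density along each plaque given (up to normalization) by $\Jac^uG_{-s}$ evaluated along that plaque. The distortion estimate shows these densities are uniformly log-bounded, independently of $s$; hence so are the densities of the Ces\`aro averages $\mu_T$. Any weak$^{\ast}$ accumulation point $\mu$ is $G_t$-invariant by the usual Krylov--Bogolyubov argument. To see that $\mu$ is a Gibbs $u$-state, disintegrate $\mu$ along unstable plaques in a foliated box: the uniform log-bound on the approximating densities passes to the limit (one must check that the conditional measures of a weak$^{\ast}$ limit of measures with uniformly log-bounded plaque-densities again have densities bounded by the same constants, using Fubini and the continuity of the unstable foliation), so the conditionals are equivalent to Lebesgue. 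The density formula \eqref{relationdensiteslocales} for the limit is obtained by writing, for $z_1,z_2$ in a plaque, the ratio of the approximating densities as $\Jac^uG_{-s}(z_2)/\Jac^uG_{-s}(z_1)$ times distortion-bounded factors, taking the $T$-average and then the limit; the ratio $\Jac^uG_{-t}(z_2)/\Jac^uG_{-t}(z_1)$ is shown to converge as $t\to\infty$ (again by distortion plus the cocycle relation $\Jac^uG_{-(t+r)}=\Jac^uG_{-t}\cdot(\Jac^uG_{-r}\circ G_{-t})$, so the increments are summable), and it is $G_t$-invariant in the appropriate sense, which forces the limit to coincide with $\psi^u_w(z_2)/\psi^u_w(z_1)$.

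Item (2) follows from (3) together with the ergodic decomposition: a.e.\ ergodic component of a Gibbs $u$-state is again $G_t$-invariant, and one checks that disintegrating first into ergodic components and then along unstable plaques is compatible with disintegrating directly along unstable plaques (the unstable foliation being measurable and the box decomposition fixed), so the conditionals of a.e.\ component along unstable plaques are still equivalent to Lebesgue; then (3) applies to each component. For item (3), let $\mu$ be any Gibbs $u$-state. Its conditional $\hat\mu_w$ on a plaque is equivalent to Lebesgue with some density $\psi^u_w$. Invariance of $\mu$ under $G_t$ translates, after changing variables by $G_{-t}$, into the relation $\psi^u_w(z)\propto \psi^u_{G_{-t}(w)}(G_{-t}(z))\cdot\Jac^uG_{-t}(z)$ up to a normalizing constant depending only on the plaque; iterating, using that $\psi^u$ is a genuine density (hence the ratio $\psi^u_{G_{-t}(w)}(G_{-t}(z_2))/\psi^u_{G_{-t}(w)}(G_{-t}(z_1))\to 1$ as $t\to\infty$ because the two points collapse together and the densities are, a posteriori, continuous — which itself is part of what we are proving, so one argues by a bootstrapping/approximation through the already-constructed Gibbs $u$-states of item (1), or invokes that any two Gibbs $u$-states supported on the same set have proportional conditionals), one recovers \eqref{relationdensiteslocales} and the uniform log-bound.

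The main obstacle I anticipate is the last point: showing that an \emph{abstract} Gibbs $u$-state (defined only by the equivalence of conditionals with Lebesgue, with no a priori regularity of the density) must have the \emph{specific} density \eqref{relationdensiteslocales} and the \emph{uniform} log-bound. The clean way around it is not to argue intrinsically but to compare with the constructed measures of item (1): given an ergodic Gibbs $u$-state $\mu$, pick a plaque $D^u$ in a full-$\mu$-measure set; then $\mu_T$ built from $D^u$ converges, by the ergodic theorem applied to Birkhoff averages along the $G_t$-orbit and absolute continuity of the holonomy of $\W^u$, back to $\mu$; so $\mu$ inherits the density formula and the bound from item (1). Making this convergence argument precise — in particular justifying that the Lebesgue-generic point of $D^u$ is $\mu$-generic — is the real content and is where the curvature pinching and the absolute continuity of $\W^u$ (established earlier) are essential.
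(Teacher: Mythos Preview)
Your proposal is correct and follows precisely the route the paper indicates: the paper does not give a detailed proof of this theorem but simply states that ``its proof follows the line of reasoning of \cite[Section 11.2.2]{BDV}'' together with the distortion control (Lemma \ref{distortioncontrol}), which is exactly the Hopf-type construction you have sketched. Your handling of item (3) via Birkhoff's theorem and the absolute continuity of $\W^u$ (rather than a direct bootstrapping on the invariance relation for the densities) is a legitimate variant of the standard argument and yields the same conclusion.
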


\section{Transversally mostly contracting foliated geodesic flows}\label{mostly_contracting}
This section is devoted to proving the next theorem. We will then follow the ``mostly contracting scenario'' developped by \cite{BV} for partially hyperbolic systems, to prove Theorem \ref{SRBmeasuresfgf} (see also \cite{BGM}).

\begin{maintheorem}
\label{negativeexponent}
Let $\F$ be a smooth transversally conformal foliation of a closed manifold $M$. Assume that $M$ is endowed with a smooth Riemannian metric such that every leaf $L$ is negatively curved for the restriction $g_{L}$. Assume that there is no transverse holonomy invariant measure for $\F$. Then the transverse Lyapunov exponent of every Gibbs u-state is negative.
\end{maintheorem}

\begin{rem}
Let $\mu$ be a totally invariant measure (it is a Gibbs $u$-state). It is easy to prove that $\lambda^{\tr}(\mu)=0$. For that purpose, one uses the map $\iota:v\in\hM\mapsto -v$ and note that $\lambda^{\tr}(v)=-\lambda^{\tr}(\iota(v))$ for $\mu$-almost every $v$. This map preserves $\mu$ (since it preserves the leaves and the Liouville measure in the leaves: see \cite[Lemma 1.34]{Paternain}) so, by integrating the last equality against $\mu$, we find $\lambda^{\tr}(\mu)=-\lambda^{\tr}(\mu)$.
\end{rem}

\subsection{Existence of transverse invariant measures: proof of Theorem \ref{negativeexponent}} 
\label{proofthmB}

\paragraph{The main criterion --} The following criterion for the existence invariant measure was proven in \cite[Theorem A]{Alvarezu-Gibbs}.

\begin{theorem}
\label{sugibbs}
Let $(M,\F)$ be a closed foliated manifold by negatively curved manifolds. Then every Gibbs $su$-state for the foliated geodesic flow $G_t$ as defined in \S \ref{defgubbsustates} is totally invariant. In particular if such a measure exists, $\hcF$ and $\F$ both possess a transverse invariant measure.
\end{theorem}

Starting with a Gibbs $u$-state $\mu$ with $\lambda^{\tr}(\mu)\geq 0$, we want to use the above criterion and to prove that it has Lebesgue disintegration along $\W^s$. 

In order to do so, our strategy is to prove that the inverse flow satisfies Pesin's entropy formula \ref{Pesinformula} and finally to conclude using Ledrappier-Young's work \cite{LYI}.

\paragraph{Entropy of Gibbs $u$-states --} The first step is to prove the following general proposition about metric entropy of Gibbs $u$-states. Such a statement may be found in \cite{LedStr} in a much more general context. In ours the proof is much shorter and shall be postponed until the end of the section.

\begin{proposition}
\label{minentropy}
Let $\mu$ be an ergodic Gibbs $u$-state for $G_t$. Then
$$h_{\mu}\geq\Lambda^u(\mu),$$
where $h_{\mu}$ denotes the metric entropy of $G_t$ for the measure $\mu$.
\end{proposition}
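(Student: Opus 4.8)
The plan is to compare the metric entropy $h_\mu$ of the foliated geodesic flow with the entropy of its time-one map $G_1$ restricted to (a quotient of) the unstable foliation, and to invoke the Margulis--Ruelle inequality together with the fact that Gibbs $u$-states have Lebesgue conditionals along unstable plaques to turn an inequality into the lower bound we want. More precisely, I would work with the time-one map $G:=G_1$, which has the same metric entropy as the flow (up to the usual normalization $h_\mu(G_t)=|t|\,h_\mu(G_1)$, so it suffices to treat $G$), and study the partial entropy of $\mu$ along $\W^u$.

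\medskip

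First I would fix a measurable partition $\xi$ of $\hM$ subordinate to the unstable foliation $\W^u$, in the sense of Ledrappier--Young: its atoms are, for $\mu$-a.e.\ $v$, open relatively compact subsets of the unstable leaf $W^u(v)$ containing a neighbourhood of $v$ inside that leaf. Such partitions exist because $\W^u$ is a continuous foliation with smooth leaves of dimension $p-1$ that is expanded by $G^{-1}$, hence contracted by $G$ in the sense required. The associated partial (unstable) entropy is
$$
h_\mu(G,\W^u)\;=\;H_\mu\bigl(G^{-1}\xi \mid \xi\bigr),
$$
and standard arguments (the partition $\xi$ is increasing under $G^{-1}$, i.e.\ $G^{-1}\xi\succ\xi$, because $G^{-1}$ expands unstable leaves) show this quantity is well defined, independent of the choice of such a $\xi$, and satisfies $h_\mu(G,\W^u)\le h_\mu(G)$. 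The second step is to compute $h_\mu(G,\W^u)$ explicitly using that $\mu$ is a Gibbs $u$-state: by Theorem~\ref{reconstruirelesetatsdeugibbs} the conditional measures of $\mu$ on unstable plaques are equivalent to $\Leb^u$ with log-bounded densities $\psi^u_w$ satisfying the cocycle relation \eqref{relationdensiteslocales}. Disintegrating along $\xi$ and using the change-of-variables formula for $G$ acting on the unstable plaques, together with the distortion control of Lemma~\ref{distortioncontrol} and the Birkhoff/Shannon--McMillan--Breiman machinery, one identifies
$$
h_\mu(G,\W^u)\;=\;\int_{\hM}\log\Jac^u G(v)\,d\mu(v)\;=\;\Lambda^u(\mu),
$$
the last equality being the integrated form of Oseledets' theorem restricted to $E^u$ (the sum of positive tangential exponents weighted by multiplicities, as in \eqref{unstablesumlyap}). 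Chaining the two displays gives $h_\mu(G)\ge h_\mu(G,\W^u)=\Lambda^u(\mu)$, which is exactly $h_\mu\ge\Lambda^u(\mu)$ after renormalizing to the flow.

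\medskip

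The main obstacle I anticipate is the rigorous justification that $h_\mu(G,\W^u)$ equals $\int\log\Jac^u G\,d\mu$ rather than merely bounding it below by it; this is where the Lebesgue disintegration of the Gibbs $u$-state and the log-boundedness of the densities (item (3) of Theorem~\ref{reconstruirelesetatsdeugibbs}) are essential. The clean way to organize this is: (i) show $H_\mu(G^{-1}\xi\mid\xi)\ge\int\log\Jac^u G\,d\mu$ by a convexity/Jensen argument applied fibrewise to the conditional densities (this direction holds for any invariant measure and uses only that $\xi$ refines under $G^{-1}$); and (ii) show the reverse inequality using that the conditionals are genuinely equivalent to Lebesgue with controlled densities, so that no entropy is ``lost'' — the fibres of $G^{-1}\xi$ inside an atom of $\xi$ are essentially the $G^{-1}$-images of unstable plaques, whose $\mu$-measures are comparable (via the log-bounded densities and the cocycle relation) to their Lebesgue volumes, i.e.\ to $(\Jac^u G)^{-1}$ times a bounded factor. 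Since for Proposition~\ref{minentropy} only the inequality $h_\mu\ge\Lambda^u(\mu)$ is needed, in the interest of brevity I would actually only spell out step (i) in full — a fibrewise application of Jensen's inequality to $-\log$ of the conditional transition probabilities, exactly as in the classical Margulis--Ruelle lower bound along an expanded foliation — and remark that equality in fact holds for Gibbs $u$-states, referring to \cite{LedStr} for the general statement. The ergodicity hypothesis is used only to make $\Lambda^u(\mu)$ a number rather than a function, and to allow the use of Birkhoff's theorem in the identification of the exponent; the general (non-ergodic) bound follows by integrating over ergodic components.
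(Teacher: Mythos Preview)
Your overall strategy --- work with a measurable partition $\xi$ subordinate to $\W^u$, use $h_\mu(G)\geq H_\mu(G^{-1}\xi\mid\xi)$, and identify $H_\mu(G^{-1}\xi\mid\xi)$ with $\int\log\Jac^u G\,d\mu$ for Gibbs $u$-states --- is sound and is precisely the Ledrappier--Strelcyn route the paper alludes to but does not follow. The paper instead uses Katok's characterization of entropy by the minimal number $N(t,r,\delta)$ of Bowen balls covering a set of measure $>1-\delta$: the Lebesgue disintegration of $\mu$ along unstable plaques forces any such cover to trace on some plaque a family of sets each of $\Leb^u$-volume $\lesssim(\Jac^u G_t)^{-1}$, whence $N(t,r,\delta)\gtrsim e^{t(\Lambda^u-\eps)}$. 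Your approach is more structural and ties in with the Ledrappier--Young formalism; the paper's is more hands-on and avoids building a subordinate partition.

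There is, however, a genuine error in your plan for step (i). You claim that $H_\mu(G^{-1}\xi\mid\xi)\geq\int\log\Jac^u G\,d\mu$ ``holds for any invariant measure'' by a Jensen argument. It does not: for $\mu$ supported on a periodic orbit the conditionals $\mu^\xi_v$ are Dirac masses, so $H_\mu(G^{-1}\xi\mid\xi)=0$, whereas $\int\log\Jac^u G\,d\mu>0$. The inequality that does hold for every invariant measure, via a counting-of-sub-atoms/Jensen argument, is the \emph{opposite} one, $H_\mu(G^{-1}\xi\mid\xi)\leq\int\log\Jac^u G\,d\mu$; there is no ``Margulis--Ruelle lower bound''. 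The direction you actually need is exactly where the Gibbs $u$-state hypothesis enters: with Lebesgue conditionals and the distortion control of Lemma~\ref{distortioncontrol} one computes $-\log\mu^\xi_v\bigl([G^{-1}\xi](v)\bigr)=\log\Jac^u G(v)+\psi(v)-\psi(Gv)$ for an integrable coboundary $\psi$ (essentially the logarithm of the product of plaque volume and local density), and then invariance gives $H_\mu(G^{-1}\xi\mid\xi)=\int\log\Jac^u G\,d\mu$. In short, you have swapped the roles of (i) and (ii): the step you intend to ``spell out in full'' via Jensen will, if carried out, produce the wrong inequality. The fix is simply to use the Lebesgue conditionals from the outset in (i); the citation of \cite{LedStr} is then exactly right.
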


\paragraph{Proof of Theorem \ref{negativeexponent} --}  Let us assume the existence of $\mu$, a Gibbs $u$-state for $G_t$ whose transverse Lyapunov exponent is nonnegative. By Theorem \ref{reconstruirelesetatsdeugibbs}, ergodic components of Gibbs $u$-states are Gibbs $u$ states, so we can assume that $\mu$ is ergodic.

From now on $\mu$ is supposed to be ergodic. Since it has a nonnegative transverse Lyapunov exponent it follows from Proposition \ref{Lyapspectrum} that the only negative Lyapunov exponents of $\mu$ are the Lyapunov exponents in the stable direction $\lambda^s_i(\mu)$. Then Ruelle's inequality (see \cite{Ruelleineq}) applied to $G_{-t}$ (which has the same measure entropy as $G_t$) implies that
$$h_{\mu}\leq-\Lambda^s(\mu).$$

Remember that since $G_t$ preserves the Liouville measure of the leaves, we have $\Lambda^u(\mu)=-\Lambda^s(\mu)$. Hence using Proposition \ref{minentropy}, we see that the following \emph{Pesin's formula} holds true

\begin{equation}
\label{Pesinformula}
h_{\mu}=-\Lambda^s(\mu).
\end{equation}

By Ledrappier-Young (see \cite{LYI}), this equality holds if and only if $\mu$ is a Gibbs $u$-state for $G_{-t}$, or if one prefers, a Gibbs $s$-state for $G_t$.

As a consequence we find that if $\mu$ has nonnegative transverse Lyapunov exponent, then it is a Gibbs $su$-state and, by Theorem \ref{sugibbs}, has to be totally invariant, i.e. locally the product of the Liouville measure by a transverse invariant measure, thus concluding the proof of Theorem \ref{negativeexponent}.

\subsection{Proof of Proposition \ref{minentropy}}

In what follows, $\mu$ is an ergodic Gibbs $u$-state for the foliated geodesic flow.

\paragraph{Metric entropy according to Katok --}

Define \emph{Bowen's dynamical balls} $B_{t,r}(v)$ as follows. For $v\in\hM$,$t\geq 0$ and $r>0$, we say that $w\in B_{t,r}(v)$ if for every $s\in[0,t]$, $\dist(G_s(v),G_s(w))\leq r$.

Given $\delta\in(0,1)$, denote by $N(t,r,\delta)$ the minimal number of dynamical balls $B_{t,r}(v)$ needed to cover a set of measure greater than $1-\delta$. Katok proved in \cite{Katok} that we have for every $\delta\in(0,1)$

\begin{equation}
\label{katokentropy}
h_{\mu}=\lim_{r\to 0}\lims_{t\to\infty}\frac{\log\,N(t,r,\delta)}{t}.
\end{equation}

\paragraph{Approaching the Lyapunov exponents --} Oseledets' theorem applied to the cocycle $DG_{|E^u}:E^u\to E^u$ provides a set  $\X^u$ full for $\mu$ such that for every $v\in\X^u$, the following equality holds

$$\lim_{t\to\infty}\frac{1}{t}\log\,\Jac^u\,G_t(v)=\Lambda^u(\mu)=\Lambda^u.$$
Fixing $\eps>0$ and a positive time $t$, we shall define the following measurable set

\begin{equation}
\label{xute}
\X^u_{t,\eps}=\left\{v\in\X^u;\,\left|\frac{1}{t}\log\,\Jac^u G_t(v)-\Lambda^u\right|<\eps\right\},
\end{equation}
and notice that $\lim_{t\to\infty}\mu(\X^u_{t,\eps})=1$ for every $\eps>0$.

From now on, we fix a small $\eps>0$, which we will let tend to zero at the end of the proof.

\paragraph{Atlas for the unstable foliation --} Consider a finite atlas $(U_i,\phi_i)_{i\in I}$ for the unstable foliation such that $U_i$ reads as a union of local unstable manifolds $D^u(v_i)=W_{loc}^u(v_i)$.

Since the foliation $\W^u$ is continuous in the $C^{\infty}$-topology, we find universal bounds for diameters and volumes of the unstable plaques.

For every chart $U_i$ with $\mu(U_i)>0$, we can disintegrate $\mu$ in the local unstable manifolds of $U_i$. The conditional measures have local densities with respect to the volume which are uniformly log-bounded by a constant independent of $i$: see Theorem \ref{reconstruirelesetatsdeugibbs}.

\paragraph{Unstable volume of dynamical balls --} The following lemma is a useful consequence of the distortion controls.

\begin{lemma}
\label{distortionbowen}
\begin{enumerate}

\item There exists a constant $r_0>0$ such that for every $r<r_0$, $w\in\hM$, $t\geq 0$ and every measurable set $V\dans\hM$ contained in an unstable plaque containing $w$ we have
$$B_{t,r}(w)\cap V=G_{-t}(W_r^u(G_t(w)))\cap V.$$

\item There exists a positive constant $C_0>0$ such that for every $r<r_0$ and every unstable plaque $D^u=D^u(v_i)$ and every $w\in\hM$ we have
$$\Leb^u(B_{t,r}(w)\cap D^u)\leq C_0\frac{\Leb^u(W^u_r(G_t(w)))}{\Jac^u G_t(w)}.$$
\end{enumerate}
\end{lemma}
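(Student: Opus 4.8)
The plan is to prove the two statements separately, both relying on the uniform exponential contraction of the foliated geodesic flow along unstable manifolds in backward time (equation \eqref{uniformcontraction}) and the distortion control of Lemma \ref{distortioncontrol}. For part (1), the point is that along an unstable plaque the flow $G_t$ acts, in backward time, as a uniform contraction; so for a point $w$ and a nearby point $z$ in the same unstable plaque, the orbit segment $\{G_s(z)\}_{s\in[0,t]}$ stays close to $\{G_s(w)\}_{s\in[0,t]}$ \emph{precisely when} $z$ is within distance comparable to $r$ of $w$ at time $t$ — the worst case being at $s=t$. First I would fix $r_0$ small enough that inside each foliated chart $(U_i,\phi_i)$ of the atlas for $\W^u$, the $r_0$-neighborhood of a point in an unstable plaque (for $\dist_u$) is contained in the plaque, and so that $r_0$-balls for $\dist$ and for $\dist_u$ are comparable on unstable plaques. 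Then, given $w\in D^u$, $V\subset D^u$ and $r<r_0$: if $z\in V$ satisfies $\dist(G_t(z),G_t(w))\le r$, i.e. $G_t(z)\in W_r^u(G_t(w))$, then by the uniform contraction \eqref{uniformcontraction} applied to $DG_{-s}$ on $E^u$, for every $s\in[0,t]$ we get $\dist_u(G_{t-s}(z),G_{t-s}(w))\le \frac{b}{a}e^{-a(t-s)}\dist_u(G_t(z),G_t(w))\le \frac{b}{a}r$; re-indexing, $\dist(G_s(z),G_s(w))\le\frac ba e^{-as}r$ for all $s\in[0,t]$ (one must absorb the constant $b/a$ into the choice of $r_0$, or rather state the inclusion with the comparable constant — I would simply shrink $r_0$ so that $\frac ba r_0$ is still admissible). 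Hence $z\in B_{t,r}(w)$, giving $G_{-t}(W_r^u(w))\cap V\subset B_{t,r}(w)\cap V$. Conversely if $z\in B_{t,r}(w)\cap V$ then in particular $\dist(G_t(z),G_t(w))\le r$, so $G_t(z)\in W_r^u(w)$ (here using that $z,w$ are in the same leaf and the stable/flow components vanish because $z$ is in the unstable plaque of $w$), i.e. $z\in G_{-t}(W_r^u(w))$. This proves the set equality.

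For part (2), I would use (1) to rewrite $B_{t,r}(w)\cap D^u = G_{-t}(W_r^u(w))\cap D^u \subset G_{-t}(W_r^u(w))$, so that
$$\Leb^u\bigl(B_{t,r}(w)\cap D^u\bigr)\le \Leb^u\bigl(G_{-t}(W_r^u(w))\bigr)=\int_{W_r^u(w)}\Jac^u G_{-t}(y)\,d\Leb^u(y)$$
by the change of variables formula along the unstable leaf. Now apply the distortion control (Lemma \ref{distortioncontrol}): for every $y\in W_r^u(w)$,
$$\Jac^u G_{-t}(y)\le C_0\exp\bigl(\dist_u(y,w)\bigr)\,\Jac^u G_{-t}(w)\le C_0 e^{r_0}\,\Jac^u G_{-t}(w).$$
Since $\Jac^u G_{-t}(w)=1/\Jac^u G_t(w)$, integrating over $W_r^u(w)$ yields
$$\Leb^u\bigl(B_{t,r}(w)\cap D^u\bigr)\le C_0 e^{r_0}\,\Leb^u\bigl(W_r^u(w)\bigr)\,\frac{1}{\Jac^u G_t(w)},$$
which is the desired inequality after renaming the constant $C_0 e^{r_0}$ as $C_0$.

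The only genuinely delicate point is the bookkeeping with constants in part (1): the contraction estimate \eqref{uniformcontraction} carries the factor $b/a>1$, so a literal application gives $B_{t,r}(w)\supset G_{-t}(W_r^u(w))$ only up to that multiplicative constant on the radius. I would handle this by choosing $r_0$ at the outset so small that $\frac ba r_0$ still lies below the injectivity-type scale of the chart atlas and below the scale on which $\dist$ and $\dist_u$ are uniformly comparable on unstable plaques, and then stating the equality for $r<r_0$ with the understanding that "$W^u_r$" is measured with $\dist_u$; the comparison of $\dist$ and $\dist_u$ on the small scale $r_0$ (both bounded by the other up to a universal constant, since $\W^u$ is $C^\infty$-continuous and $\hM$ is compact) lets one pass freely between the two. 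Everything else is a routine combination of the change-of-variables formula and the already-established distortion bound.
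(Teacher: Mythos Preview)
Your argument is correct and is precisely the approach the paper has in mind: part (1) is the standard observation that uniform forward expansion along $\W^u$ forces the maximum in the Bowen-ball definition to occur at $s=t$, and part (2) is change of variables on the unstable leaf combined with the distortion control of Lemma \ref{distortioncontrol}. The paper's own proof consists only of those two remarks and leaves the details you have written out as routine; your handling of the multiplicative constant $b/a$ and of the comparison between $\dist$ and $\dist_u$ is exactly the bookkeeping one needs to make the one-line argument rigorous.
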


\begin{proof}
The first part of the lemma follows classically from the fact that $G_t$ expands uniformly the unstable foliation $\W^u$.

The second part follows from the first one, from the classical distortion control, as well as from the fact that unstable plaques are uniformly bounded.
\end{proof}

\paragraph{Covering the local unstable manifolds --} Fixing a small $\delta>0$, we get $T_0$ such that for every $t\geq T_0$, and every $i\in I$ with $\mu(U_i)>0$, the relative mass $\mu(^c\,\X_{t,\eps}^u\cap U_i)/\mu(U_i)$ is less that $\delta$ (we adopt the notation $^c\,X$ for the complement of a set $X$). In particular, the mass of $^c\,\X_{t,\eps}^u$ is less that some constant times $\delta$.

Now choose $r$ smaller than the $r_0$ given by Lemma \ref{distortionbowen} and cover $\X_{t,\eps}^u$ by $p$ dynamical balls $B_{t,r}(w_j)$, $w_1,...w_p\in\hM$. We can assume without loss of generality that all these dynamical balls intersect $\X_{t,\eps}^u$. Moreover, up to consider dynamical balls associated to $2r$ instead to $r$, which doesn't affect our argument, we can ask that all $w_j$ belong to $\X_{t,\eps}$. We now have two facts

\begin{itemize}
\item $\mu(^c\bigcup_{j=1}^p B_{t,r}(w_j)\cap U_i)\leq\delta\mu(U_i)$.
\item $\mu_{|U_i}$ has Lebesgue disintegration in the unstable plaques $D^u(x_i)$ with local densities which are \emph{uniformly log-bounded} (see Theorem \ref{reconstruirelesetatsdeugibbs}).
\end{itemize}

From this and Lemma \ref{distortionbowen}, we find a plaque $D^u$ and uniform constants $C_1,\delta_1>0$ such that

\begin{equation}
\label{majorer}
\Leb^u\left(\bigcup_{j=1}^p B_{r,t}(w_j)\cap D^u\right)\geq (1-\delta_1)\Leb^u(D^u).
\end{equation}

\begin{equation}
\label{minorer}
\Leb^u(B_{r,t}(w_j)\cap D^u)\leq C_1\frac{\Leb^u(W^u_r(G_t(w_j)))}{\Jac^u\,G_t(w_j)}.
\end{equation}

Putting \eqref{majorer} and \eqref{minorer} together and using a uniform upper bound for $\Leb^u(W^u_r(v))$, we get a constant $C_r>0$ depending only on $r$ such that

\begin{eqnarray*}
(1-\delta_1)\Leb^u(D^u)&\leq &\sum_{j=1}^{p}\Leb^u(B_{r,t}(w_j)\cap D^u)\\
                       &\leq & C_r\sum_{j=1}^p\frac{1}{\Jac^u\,G_t(w_j)}.
\end{eqnarray*}
Since $w_j\in\X^u_{t,\eps}$ for every $j$, we see that by definition of $\X_{t,\eps}^u$ (see \eqref{xute}) $\frac{1}{t}\Jac^u\,G_t(w_j)\geq\Lambda^u-\eps$. Finally there exists a constant $C_r'>0$ depending only on $r$ such that the following lower bound for $p$ holds

\begin{equation}
\label{lbp}
C_r'\exp\left[t(\Lambda^u-\eps)\right]\leq p.
\end{equation}

Lower bound \eqref{lbp} holds for every open cover by dynamical balls of $\X_{t,\eps}^u$, which is of measure $\geq 1-\delta$: it also provides a lower bound for $N(t,r,\delta)$. Finally, we find $\lims_{t\to\infty}t^{-1}\log N(t,r,\delta)\geq\Lambda^u-\eps$. Since $\eps$ is arbitrary, we deduce that $h_{\mu}\geq\Lambda^u(\mu)$ as desired. \quad \hfill $\square$

\section{Finiteness of SRB measures and of minimal sets}\label{finiteness}

Here we prove Theorem \ref{SRBmeasuresfgf}. It remains to prove that if every Gibbs $u$-state for $G_t$ has negative transverse Lyapunov exponent, then $G_t$ has finitely many SRB measures, their transverse Lyapunov exponents are negative and $\F$ has finitely many minimal sets.

\subsection{Finiteness of SRB measures}
\begin{proposition}
\label{srbugibbs}
Let $\F$ be a transversally conformal foliation of a closed manifold  $M$. Assume that $M$ is endowed with a smooth Riemannian metric such that every leaf is negatively curved. Assume that all Gibbs $u$-states have negative transverse Lyapunov exponent. Then the following assertions hold true.
\begin{enumerate}
\item Every ergodic Gibbs $u$-state is a SRB measure.
\item There is a finite number of Gibbs $u$-states.
\item The supports of the SRB measures are disjoint minimal sets of $\W^{cu}$.
\end{enumerate}
\end{proposition}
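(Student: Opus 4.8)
The plan is to follow the classical "mostly contracting" strategy of Bonatti--Viana \cite{BV}, adapted here to the foliated geodesic flow with the transverse direction playing the role of the central direction. The three items are intertwined, so I would prove them together rather than in sequence.

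First, for item (1): let $\mu$ be an ergodic Gibbs $u$-state. By hypothesis $\lambda^{\tr}(\mu)<0$, and by Proposition \ref{Lyapspectrum} the full Lyapunov spectrum of $\mu$ at a typical point is $(\lambda^s_1,\dots,\lambda^s_{k_s},0,\lambda^u_1,\dots,\lambda^u_{k_u},\lambda^{\tr})$. So the only nonnegative exponents are $0$ (the flow direction) and the $\lambda^u_j>0$ along $E^u$; all other exponents are strictly negative. Pesin theory then provides, for $\mu$-a.e.\ $v$, a Pesin stable manifold $W^{ss}_{\mathrm{Pesin}}(v)$ tangent to $E^s(v)\oplus\NN'_{\mathrm{transverse}}$, i.e.\ containing both the (leafwise) stable manifold $W^s(v)$ and a transverse direction along which points are exponentially contracted in forward time. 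I would argue, as in \cite[\S 11.2]{BDV} or the reasoning used for Pesin-stable classes, that the union of such Pesin stable manifolds over the points of a local unstable plaque on which $\mu$ has Lebesgue conditionals sweeps out a positive-Lebesgue-measure subset of $\hM$, and that every point of this set lies in the basin of $\mu$: indeed Birkhoff averages are constant along $W^u$ up to the Gibbs property (absolute continuity of the unstable holonomy and the prescribed densities \eqref{relationdensiteslocales}), constant along Pesin stable manifolds by forward contraction, and the ergodic basin of $\mu$ has full $\mu$-measure. Hence $\mu$ is an SRB measure.

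For item (2): the finiteness should follow because two distinct ergodic Gibbs $u$-states have essentially disjoint basins (a point cannot Birkhoff-average to two different measures), each basin has positive Lebesgue measure, and Lebesgue measure is finite; the standard point is to show a \emph{uniform} lower bound for the Lebesgue measure of each basin. This uniform bound comes from the uniform log-boundedness of the unstable densities (Theorem \ref{reconstruirelesetatsdeugibbs}), the uniform geometry of unstable plaques, and the uniform negativity of $\lambda^{\tr}$ over all Gibbs $u$-states — here one needs that $\lambda^{\tr}(\mu)<0$ not just pointwise but with a gap bounded away from $0$ uniformly, which I would obtain from the compactness of the set of Gibbs $u$-states together with upper semicontinuity of $\mu\mapsto\lambda^{\tr}(\mu)$ (this last semicontinuity is the kind of subadditive-cocycle argument that deserves a lemma of its own, and I expect \emph{this uniformity} to be the main obstacle). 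Given the uniform bound, only finitely many pairwise disjoint basins of positive measure can fit, so there are finitely many ergodic Gibbs $u$-states; since every Gibbs $u$-state is a combination of its ergodic components (Theorem \ref{reconstruirelesetatsdeugibbs}), there are finitely many in total.

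For item (3): I would show the support of each ergodic Gibbs $u$-state $\mu$ is $\W^{cu}$-saturated and minimal. Saturation by $W^u$ is immediate from the Gibbs property (positive unstable conditionals), and saturation by the flow from $G_t$-invariance, so $\Supp\mu$ is $\W^{cu}$-saturated. For minimality: if $\Lambda\subsetneq\Supp\mu$ were a proper closed $\W^{cu}$-saturated subset, one could push a piece of unstable plaque forward and average to produce, by Theorem \ref{reconstruirelesetatsdeugibbs}(1), a Gibbs $u$-state supported in $\Lambda$, contradicting either ergodicity or minimality of the support of an ergodic Gibbs $u$-state; a cleaner route is to note any minimal $\W^{cu}$-set carries a Gibbs $u$-state (again by Theorem \ref{reconstruirelesetatsdeugibbs}(1) applied to an unstable plaque inside it), these are SRB by item (1), and two distinct minimal sets are disjoint, so $\Supp\mu$ must itself be minimal. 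Disjointness of supports of distinct SRB measures then follows because distinct ergodic measures have mutually singular basins while, if the supports met, the Gibbs/absolute-continuity structure would force the basins to overlap in positive Lebesgue measure. I would assemble these observations carefully, the one genuinely delicate ingredient being the uniform spectral gap used in step (2).
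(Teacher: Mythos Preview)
Your plan is correct and coincides with the paper's: the paper likewise defers Items 1 and 2 to the mostly-contracting machinery of \cite{BV} (via \cite{BGM}), invoking Pesin stable manifolds exactly as you outline, and proves Item 3 by the argument you sketch (the support of an ergodic Gibbs $u$-state is $\W^{cu}$-saturated; any $\W^{cu}$-minimal subset carries an ergodic Gibbs $u$-state by Theorem \ref{reconstruirelesetatsdeugibbs}; disjointness of supports of distinct ergodic Gibbs $u$-states, quoted from \cite[Lemma 2.9]{BV}, then forces $\Supp\mu$ to equal that minimal set). One remark: the ``uniform spectral gap'' you flag as the delicate point in Item 2 is in fact immediate here, since by transverse conformality $\lambda^{\tr}(\mu)$ is the integral of a fixed continuous function of $v$ and is therefore weak$^\ast$-continuous on the compact convex set of Gibbs $u$-states.
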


\begin{proof}
The proofs of Items 1. and 2. are given by Bonatti, G{\'o}mez-Mont  and Mart{\'i}nez in \cite[p.16-17]{BGM}. The idea is to use Pesin's stable manifold theory (which we will introduce later on) as well as the strong similarity between our context and that of ``mostly contracting'' partially hyperbolic diffeomorphisms and to reproduce the line of reasoning of \cite{BV}.

The fact that the supports of two different SRB measures are disjoint can be shown by copying verbatim the proof of \cite[Lemma 2.9]{BV}.

Now let us show that the support of every SRB measure $\mu$ is a minimal set of $\W^{cu}$. Since $\mu$ is a Gibbs $u$-state, its support $\Supp(\mu)$ is a closed and $\W^{cu}$-saturated set. Let $K^{cu}\dans\Supp(\mu)$ be a $\W^{cu}$-minimal set (which is in particular a $G_t$-invariant set). Using Theorem \ref{reconstruirelesetatsdeugibbs} we construct an ergodic Gibbs $u$-state $\mu'$ with $\Supp(\mu')\dans K^{cu}$. Since we proved that the supports of two different ergodic Gibbs $u$-states are disjoint, it must be the case that $\mu=\mu'$ from which we deduce that $K^{cu}=\Supp(\mu)$. The proposition follows.
\end{proof}

\begin{rem}
The argument shows that every $\W^{cu}$-minimal set is the support of a unique SRB measure.
\end{rem}

\subsection{A bijective correspondence between minimal sets and ergodic Gibbs $u$-states}

\begin{proposition}
\label{bijcorminimalgibbsstate}
Let us assume the hypotheses of Proposition \ref{srbugibbs}. Then every minimal set of $\hcF$ supports a unique Gibbs $u$-state and every ergodic Gibbs $u$-state is supported inside a minimal set of $\hcF$.
\end{proposition}

The rest of the section is devoted to the proof of that proposition which, together with Theorem \ref{negativeexponent} and Proposition \ref{srbugibbs}, proves Theorem \ref{SRBmeasuresfgf}.

\paragraph{A geometric property --} For a set $K\dans\hM$ we denote $W^s(K)$ its \emph{stable manifold} i.e. the union of the stable manifolds $W^s(v)$ of elements  $v\in K$. The sets $W^u(K),W^c(K),W^{cs}(K)$ and $W^{cu}(K)$ are defined analogously.

\begin{lemma}
\label{geomproperty}
The following properties hold true for every $v\in\hM$ 
\begin{enumerate}
\item $W^u(W^c(v))=W^{cu}(v)$;
\item $W^s(W^{cu}(v))$ has full volume in $\hL_v$.
\end{enumerate}
\end{lemma}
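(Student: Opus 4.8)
Both statements are essentially facts about how the stable, unstable, center and center-unstable foliations of a foliated hyperbolic flow intertwine inside a single leaf $\hL_v$; the ambient foliation $\hcF$ plays no role, so I would work entirely inside the geodesic flow of the negatively curved leaf $\hL_v$ and invoke only the classical hyperbolic geometry of the geodesic flow on a negatively curved (not necessarily compact, but uniformly pinched) manifold. The first item is a pure set-theoretic identity of foliations; the second is a "Fubini along the stable foliation" statement that will rely on the absolute continuity of $\W^s$ inside the leaves of $\hcF$, which is recorded earlier (the foliations $\W^u$ and $\W^s$ are absolutely continuous in the leaves of $\F$, cf. the discussion around Theorem \ref{sugibbs}).

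\textbf{Item 1.} I would prove the two inclusions. For $W^u(W^c(v))\subset W^{cu}(v)$: a point $w\in W^u(W^c(v))$ lies on $W^u(G_{s}(v))$ for some $s\in\R$; since $W^{cu}(G_s(v))=W^{cu}(v)$ (the center-unstable manifold is $G_t$-saturated, being the $G_t$-saturation of $\W^u$) and $W^u(G_s(v))\subset W^{cu}(G_s(v))$, we get $w\in W^{cu}(v)$. For the reverse inclusion $W^{cu}(v)\subset W^u(W^c(v))$: by definition $W^{cu}(v)=\bigcup_{s\in\R}G_s(W^u(v))=\bigcup_{s\in\R}W^u(G_s(v))$ (using $G_t$-invariance of $\W^u$), and each $G_s(v)\in W^c(v)$, so $W^{cu}(v)\subset W^u(W^c(v))$. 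This is the kind of routine unwinding of definitions that I would state crisply but not belabor.

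\textbf{Item 2.} The statement is that $W^s(W^{cu}(v))$, the union of local-to-global stable manifolds through points of the center-unstable manifold of $v$, has full Riemannian volume in the leaf $\hL_v$. The model fact here is the classical one for the geodesic flow of a negatively curved manifold: the stable foliation $\W^s$ and the center-unstable foliation $\W^{cu}$ are transverse and complementary-dimensional, and their "product" covers a full-measure set (in a compact negatively curved manifold one even gets everything; in our setting one argues locally and uses recurrence). I would proceed as follows. First, by transversality of $E^s$ and $E^{cu}$ and the local product structure of the stable/center-unstable foliations inside a leaf of $\hcF$ (local charts of $\hcF$ with product structure exist, cf.\ the proof of Theorem \ref{sugibbs}), the set $W^s(W^{cu}(v))$ contains an open neighborhood of $v$ in $\hL_v$ whenever $v$ is a point whose forward orbit returns near itself — but to get \emph{full} volume I would instead argue: fix a local center-unstable plaque $D^{cu}$ through $v$; the local stable holonomy spreads $D^{cu}$ along $\W^s$ to a set $\widehat D$ which is a neighborhood of $v$ in $\hL_v$ of positive volume; now saturate under the geodesic flow — $G_t(\widehat D)\subset W^s(W^{cu}(v))$ for all $t$ since $W^{cu}(v)$ is $G_t$-invariant and $G_t$ maps stable manifolds to stable manifolds — and use that the orbit of $v$ (hence of a full-volume set of nearby points, by ergodicity arguments or by density of geodesics) equidistributes / is dense in $\hL_v$ relative to the Liouville class, so $\bigcup_t G_t(\widehat D)$ exhausts $\hL_v$ up to a volume-zero set. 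The absolute continuity of the stable foliation is what guarantees that "neighborhood in $\hL_v$ obtained by stable holonomy of a positive-volume plaque" actually has positive volume in $\hL_v$, and hence that iterating the flow gives full volume rather than merely a dense union.

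\textbf{Main obstacle.} The delicate point is Item 2: making rigorous that $\bigcup_{t\in\R}G_t(\widehat D)$ has full volume in the (possibly non-compact) leaf $\hL_v$. In the compact negatively curved case this is standard (every stable-unstable local product chart saturated by the flow is open and dense, and a Hopf-type argument gives full measure), but here $\hL_v$ is only immersed in a compact manifold. I would handle this by covering $\hL_v$ by countably many foliated product charts of uniform size, observing that each such chart is met by the $G_t$-orbit of the local product set $\widehat D$ (geodesic flow on a negatively curved manifold is topologically transitive on each such chart up to the stable direction, using the pinching and the standard shadowing), and that the stable-holonomy absolute continuity makes the resulting preimages of positive volume; a countable union of complements of positive-volume sets inside each chart, pieced together, is still only the complement of a full-volume set. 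I would cite the absolute continuity statement from \cite{Alvarezu-Gibbs} and the uniform geometry coming from the pinching $-b^2<-a^2<0$ as the two inputs that upgrade the topological density to full measure.
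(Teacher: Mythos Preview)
Your treatment of Item~1 is fine and matches the paper's argument.

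For Item~2 there is a genuine gap. Your plan is to start from a local product neighborhood $\widehat D=W^s_{loc}(D^{cu})$ of $v$, saturate it by the flow, and argue that $\bigcup_t G_t(\widehat D)$ has full volume in $\hL_v$ by appealing to some form of topological transitivity or shadowing of the geodesic flow inside the leaf. But leaves of $\hcF$ need not be compact, and the geodesic flow on a non-compact pinched negatively curved manifold need \emph{not} be topologically transitive. The cleanest counterexample is the simply connected case $\hL_v=T^1\tL$: there a vector $w'$ belongs to $\bigcup_t G_t(\widehat D)$ only if its pair of endpoints at infinity $(w'(-\infty),w'(\infty))$ lies in a fixed small product neighborhood of $(v(-\infty),v(\infty))$, since $G_t$ does not move endpoints. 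This set has positive but far from full measure, so your saturation argument cannot reach full volume, and the covering/shadowing fix you propose does not help because orbits of small open sets simply do not visit every chart.

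The paper sidesteps all of this with a direct geometric computation in the universal cover. Lifting to $T^1\tL$ and using the identification of $\tW^{cu}(v)$ with $\{w:w(-\infty)=v(-\infty)\}$ and of $\tW^{cs}(v)$ with $\{w:w(\infty)=v(\infty)\}$, one sees immediately that
\[
\tW^s(\tW^{cu}(v))=\{w': w'(\infty)\neq v(-\infty)\}=T^1\tL\setminus\tW^{cs}(-v),
\]
because as $w$ ranges over $\tW^{cu}(v)$ the forward endpoint $w(\infty)$ ranges over all of $\tL(\infty)\setminus\{v(-\infty)\}$. Since $\tW^{cs}(-v)$ is a proper submanifold of $T^1\tL$ it has zero volume, and the claim follows. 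No transitivity, no absolute continuity, no iteration is needed. I would replace your argument for Item~2 with this one.
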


\begin{proof}
The first property is clear because $W^{cu}(v)$ has been defined as the saturation of $W^u(v)$ in the flow direction and because unstable manifolds are invariant by the flow (i.e. $G_t(W^u(v))=W^u(G_t(v))$). In order to prove the second one, let us work in the universal cover.

Let $L$ be a leaf of $\F$ and $\tL$ be its universal cover. It is compactified by adding the \emph{sphere at infinity} $\tL(\infty)$, defined as the set of equivalence classes of geodesic rays for the relation ``stay at bounded distance'' (see \cite[p.28]{Ba}). Lifts to $T^1\tL$ of stable manifolds of $G_t$ are denoted by $\tW^s(.)$. Manifolds $\tW^{u}(.)$, $\tW^{cs}(.)$ and $\tW^{cu}(.)$ are defined analogously. In order to prove the second property, it is enough to prove the following equality for every $v\in T^1\tL$

\begin{equation}
\label{deccentreinstable}
\tW^s(\tW^{cu}(v))=T^1\tL\moins\tW^{cs}(-v),
\end{equation}
indeed $\tW^{cs}(-v)$ is a strict submanifold of $T^1\tL$ and therefore has volume zero.

For $v\in T^1\tL$ denote by $v(\infty)=\lim_{t\to\infty}c_v(t)\in\tL(\infty)$ and $v(-\infty)=\lim_{t\to\infty}c_v(-t)\in\tL(\infty)$, where $c_v$ is the geodesic directed by $v$. Clearly, $v(-\infty)\neq v(\infty)$ (the geodesic rays generated by $v$ and $-v$ don't stay at bounded distance). Moreover it is well known that $w\in\tW^{cs}(v)$ (resp. $w\in\tW^{cu}(v)$) if and only if $v(\infty)=w(\infty)$ (resp. $v(-\infty)=w(-\infty)$): see \cite[p.72]{Ba}. This implies that $\tW^s(\tW^{cu}(v))\cap\tW^{cs}(-v)=\vide$ for every $v\in T^1\tL$.

Now let $\xi=v(-\infty)$. Let $w\in T^1\tL$ and $\xi'=w(\infty)$. If $w\notin\tW^{cs}(-v)$ then $\xi'\neq\xi$ and there exists a directed geodesic starting at $\xi$ and ending at $\xi'$. This geodesic is precisely the intersection of $\tW^{cu}(v)$ with $\tW^{cs}(w)$. In particular it intersects $\tW^s(w)$. This implies that $w\in\tW^s(\tW^{cu}(v))$.
\end{proof}

\paragraph{Basins of Gibbs $u$-states --}  We now prove that the intersection between the basin of an ergodic Gibbs $u$-state and a typical leaf is large. Recall that the \emph{basin} of $\mu$ is defined as the set
$$\B(\mu)=\left\{v\in\hM;\,\frac{1}{T}\int_0^T\delta_{G_t(v)}dt\To_{T\to\infty}\mu.\right\}.$$

\begin{lemma}
\label{intbasinleaflarge}
Let $\mu$ be an ergodic Gibbs $u$-state for $G_t$. Then there is a Borel set $\X\dans\hM$ full for $\mu$ such that for every $v\in\X$, $\B(\mu)\cap\hL_v$ has full volume in $\hL_v$.
\end{lemma}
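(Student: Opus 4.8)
The plan is to combine the fact that Gibbs $u$-states have absolutely continuous conditional measures along unstable plaques with the geometric structure described in Lemma \ref{geomproperty}, together with Pesin theory (or more elementary Birkhoff-type arguments) to propagate the basin along stable manifolds. First I would note that, since $\mu$ is ergodic, Birkhoff's ergodic theorem gives a full-$\mu$ set $\X_0\subset\B(\mu)$. Because $\mu$ is a Gibbs $u$-state, its disintegration along local unstable plaques is equivalent to $\Leb^u$; hence for $\mu$-a.e.\ $v$, Lebesgue-a.e.\ point of $W^u_{loc}(v)$ lies in $\B(\mu)$. By $G_t$-invariance of $\B(\mu)$ and of the unstable foliation (and since center-unstable leaves are obtained by flowing unstable plaques, $W^{cu}(v)=W^u(W^c(v))$ by Lemma \ref{geomproperty}(1)), this upgrades to: for $\mu$-a.e.\ $v$, $\B(\mu)$ has full $\Leb^{cu}$-measure in $W^{cu}(v)$.

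Next I would push the basin along the stable foliation. The key input is that $\B(\mu)$ is $W^s$-saturated: if $w\in W^s(v)$ then $\dist(G_t(v),G_t(w))\to 0$, so the Birkhoff averages of Dirac masses along the two orbits have the same weak-$*$ limits, whence $v\in\B(\mu)\iff w\in\B(\mu)$. Therefore $\B(\mu)\supset W^s\big(\text{full-}\Leb^{cu}\text{ subset of }W^{cu}(v)\big)$ for $\mu$-a.e.\ $v$. Now invoke the absolute continuity of the stable foliation $\W^s$ inside the leaves of $\hcF$ (stated in the proof sketch of Theorem \ref{sugibbs}, citing \cite[Theorem 3.7]{Alvarezu-Gibbs}): the holonomy of $\W^s$ between transversals is absolutely continuous, so if $A\subset W^{cu}(v)$ has full $\Leb^{cu}$-measure then $W^s(A)$ has full volume inside $W^s(W^{cu}(v))$. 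Combining this with Lemma \ref{geomproperty}(2), which asserts $W^s(W^{cu}(v))$ has full volume in $\hL_v$, we conclude that $\B(\mu)\cap\hL_v$ has full volume in $\hL_v$ for $\mu$-a.e.\ $v$, which is exactly the assertion of the lemma (taking $\X$ to be the intersection of the various full-$\mu$ sets used along the way).

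The main obstacle is making the absolute continuity step rigorous and uniform. One must be careful that the ``full $\Leb^{cu}$-measure'' property of $\B(\mu)\cap W^{cu}(v)$ holds not merely for a single $v$ but on a genuinely $\mu$-full set of $v$, and that the stable holonomy transporting this set does not destroy full measure --- this is precisely where absolute continuity of $\W^s$ is needed, and one should check that the version of absolute continuity available (holonomy within a single leaf $\hL$, between local unstable plaques) suffices to conclude full volume of $W^s(A)$ in $W^s(W^{cu}(v))$. A secondary technical point is the bookkeeping of null sets: the exceptional $\mu$-null sets coming from Oseledets/Birkhoff, from the Gibbs disintegration (Theorem \ref{reconstruirelesetatsdeugibbs}), and from Lemma \ref{geomproperty} must all be absorbed into the final set $\X$, and one should verify $\X$ can be taken $G_t$-invariant and $\W^u$-saturated so that the statement is clean. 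Once these measure-theoretic subtleties are handled, the geometric picture --- unstable plaques fill up center-unstable leaves, stable manifolds emanating from a center-unstable leaf fill up the whole leaf $\hL_v$, and the basin is both $\W^u$- and $\W^s$-saturated --- makes the conclusion essentially immediate.
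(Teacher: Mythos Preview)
Your proposal is correct and follows essentially the same route as the paper: use the Gibbs $u$-state property to get that $\B(\mu)$ has full Lebesgue measure in local unstable plaques, propagate via $G_t$-invariance and Lemma \ref{geomproperty}(1) to full $\Leb^{cu}$-measure in $W^{cu}(v)$, then use $\W^s$-saturation of the basin together with absolute continuity of $\W^s$ and Lemma \ref{geomproperty}(2) to conclude. The paper makes the ``upgrade'' step you flag explicit by setting $\X=\bigcap_{n\in\Z}G_n(\X_1)$ (where $\X_1$ is the set where $\B(\mu)$ has full measure in $W^{cu}_1(v)$) and using the uniform expansion of $G_t$ along $\W^u$ to exhaust the entire center-unstable leaf, which is exactly the bookkeeping you anticipated.
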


\begin{proof}
First notice that $\B(\mu)$ is $\W^s$-saturated. Using the second item of Lemma \ref{geomproperty} as well as the absolute continuity of $\W^s$ inside leaves of $\hcF$ (see for example \cite[Theorem 3.7]{Alvarezu-Gibbs}) it is enough to prove the existence of a Borel set $\X$ full for $\mu$ such that for every $v\in\X$, $\B(\mu)\cap W^{cu}(v)$ has full volume in $W^{cu}(v)$.

Denote by $\X_1\dans\hM$ the set of points $v\in\hM$ such that $\Leb^{cu}$-almost every point of $W^{cu}_1(v)$ belongs to $\B(\mu)$. Since $\mu$ is an ergodic Gibbs $u$-state, $\X_1$ is full for $\mu$.

We define $\X=\bigcap_{n\in\Z} G_n(\X_1)$. Fix $v\in\X$ and denote $v_m=G_m(v)$ for $m\in\Z$.

\paragraph{Claim --} \emph{For every $m\in\Z$ the basin $\B(\mu)$ contains a full volume subset of $W^u(G_{[0,1]}(v_m))$.}\\

Establishing the claim suffices to prove the lemma. Indeed by the first item of \ref{geomproperty} we have $W^{cu}(v)=\bigcup_{m\in\Z}W^u(G_{[0,1]}(v_m))$.

Let us prove this claim. Note that, if we set $\chi=\Min\left|\left|(DG_1)_{|E^u}\right|\right|>1$, we have for $n\geq -m$ 
$$W_{\chi^{n+m}}^u\left(G_{[0,1]}(v_m)\right)\dans G_{n+m}\left(W_1^u\left(G_{[0,1]}(v_{-n})\right)\right),$$
in such a way that
$$W^u\left(G_{[0,1]}(v_m)\right)=\bigcup_{n\in\Z}G_{n+m}\left(W_1^u\left(G_{[0,1]}(v_{-n})\right)\right).$$

Since $v\in\X$ we have $v_{-n}\in\X_1$ for every $n$ and $\Leb^{cu}$-almost every point of $W^u_1(G_{[0,1]}(v_{-n}))$ belongs to $\B(\mu)$. Using that $\B(\mu)$ is $G_t$-invariant this property also holds for $G_{n+m}(W^u_1(G_{[0,1]}(v_{-n})))$. Finally we deduce that $\Leb^{cu}$-almost every $v\in W^u(G_{[0,1]}(v_m))$ belongs to $\B(\mu)$ and the claim, as well as the lemma, follows.
\end{proof}

\paragraph{Pesin manifolds --}  Assume here that $\mu$ is an ergodic Gibbs $u$-state whose transverse Lyapunov exponent is negative. By Proposition \ref{Lyapspectrum}, $\mu$ is \emph{Pesin-hyperbolic}: its Lyapunov exponents are $\neq 0$.

\emph{Pesin's stable manifold theory} (see for example \cite{BV}) implies that $\mu$-almost every $v\in\hM$ admits a \emph{Pesin stable manifold} $W_{Pes}^s(v)$ (of dimension $\dim\,W^s+\codim\,\F$) and that the ``foliation'' $(W^s_{Pes}(v))_{v\in\hM}$ is absolutely continuous (see \cite[p.164]{BV} for the definition).

\emph{Pesin center-stable manifold} $W^{cs}_{Pes}(v)$ is the saturation of $W^s_{Pes}(v)$ in the flow direction. Note that these manifolds are transverse to $\W^u$. We denote respectively by $\W^s_{Pes}$ and $\W^{cs}_{Pes}$ the stable and center-stable Pesin foliations.

\begin{lemma}
\label{basinPesinstable}
Let $\mu$ be an ergodic Gibbs $u$-state for $G_t$ with negative transverse Lyapunov exponent. Then there exists a Borel set $\Y\dans\hM$ full for $\mu$ such that for every $v\in\Y$ there exists a Borel set $\Gamma_v\dans W^u_{loc}(v)$ of full volume such that
\begin{enumerate}
\item $\Gamma_v$ is included in the basin of $\mu$;
\item every point of $\Gamma_v$ admits a Pesin stable manifold.
\end{enumerate}

Moreover there exists $\eps=\eps(v)>0$ as well as a Borel set $\Gamma_{v,\eps}\dans\Gamma_v$ of positive volume such that for every $w\in\hM$ with $\dist(v,w)\leq\eps$, $\W^s_{Pes}(\Gamma_{v,\eps})$ induces a holonomy map between $W^u_1(v)$ and $W^u_1(w)$.

In particular the basin of $\mu$ intersects $W^u_1(w)$ in a set of positive volume whenever $\dist(v,w)\leq\eps$.
\end{lemma}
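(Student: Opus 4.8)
The plan is to combine the fact that $\mu$ is an ergodic Gibbs $u$-state with negative transverse Lyapunov exponent (so, by Proposition \ref{Lyapspectrum}, that $\mu$ is Pesin-hyperbolic) with Lemma \ref{intbasinleaflarge} and the machinery of Pesin stable manifold theory. First I would invoke Lemma \ref{intbasinleaflarge}: it produces a full-$\mu$ Borel set on which the basin $\B(\mu)$ meets the whole leaf $\hL_v$ in full volume. Since $\mu$ is a Gibbs $u$-state, its conditional measures in unstable plaques are equivalent to Lebesgue, so $\mu$-a.e.\ $v$ has the property that Leb$^u$-a.e.\ point of $W^u_{\mathrm{loc}}(v)$ lies in $\B(\mu)$; this is how I would manufacture $\Gamma_v$ for the first conclusion. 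For the second, Pesin's stable manifold theorem gives a full-$\mu$ set $\Lambda_{\mathrm{Pes}}$ of points admitting a Pesin stable manifold, and one must argue that $\Gamma_v$ can also be taken to lie, up to a null set, inside $\Lambda_{\mathrm{Pes}}$. The delicate point is that $\Lambda_{\mathrm{Pes}}$ is full for $\mu$ but not obviously full for the unstable conditionals; however, since the unstable conditionals \emph{are} the disintegration of $\mu$, the disintegration theorem says precisely that for $\mu$-a.e.\ $v$ the conditional $\mu_v^u$ gives full mass to $\Lambda_{\mathrm{Pes}}$, and as $\mu_v^u \sim \mathrm{Leb}^u$ this means $\mathrm{Leb}^u$-a.e.\ point of $W^u_{\mathrm{loc}}(v)$ admits a Pesin stable manifold. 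Intersecting the two full-volume subsets of $W^u_{\mathrm{loc}}(v)$ gives the desired $\Gamma_v$, and taking the intersection of the two full-$\mu$ sets of base points defines $\Y$.

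Next I would establish the "moreover" part, which is the heart of the statement and where I expect the main technical work to reside. Fix $v\in\Y$. The Pesin stable foliation $\W^s_{\mathrm{Pes}}$ is absolutely continuous and has leaves of dimension $\dim W^s + \mathrm{codim}\,\F$, hence of complementary dimension to $W^u$ inside $\hM$; moreover these leaves vary measurably but with uniform size on Pesin (Lyapunov) blocks. I would choose a Pesin block $\Delta_\ell$ of positive $\mu$-measure on which the local Pesin stable manifolds have size bounded below by some $\delta_\ell>0$ and vary continuously; since $v\in\Y$ and $\Gamma_v$ has full volume in $W^u_{\mathrm{loc}}(v)$, the set $\Gamma_{v,\eps}:=\Gamma_v\cap\Delta_\ell$ has positive $\mathrm{Leb}^u$-measure for a suitable $\ell$, and I would set $\eps=\eps(v)$ to be a fraction of the uniform Pesin stable size $\delta_\ell$, small enough that the Pesin stable manifolds through points of $\Gamma_{v,\eps}$ all have radius $>2\eps$. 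For any $w$ with $\mathrm{dist}(v,w)\le\eps$, the manifold $W^u_1(w)$ is $C^1$-close to $W^u_1(v)$ (by continuity of $\W^u$ in the $C^\infty$ topology), and each Pesin stable manifold $W^s_{\mathrm{Pes}}(z)$, $z\in\Gamma_{v,\eps}$, being transverse to $\W^u$ and of complementary dimension, intersects $W^u_1(w)$ in exactly one point, depending measurably on $z$; this defines the holonomy map $W^u_1(v)\to W^u_1(w)$ along $\W^s_{\mathrm{Pes}}$ restricted to $\Gamma_{v,\eps}$.

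Finally, for the last assertion I would use absolute continuity of the Pesin stable foliation: the holonomy just constructed maps $\mathrm{Leb}^u$-null sets to $\mathrm{Leb}^u$-null sets and hence positive-volume sets to positive-volume sets, so the image of $\Gamma_{v,\eps}$ — which has positive $\mathrm{Leb}^u$-measure in $W^u_1(v)$ — has positive $\mathrm{Leb}^u$-measure in $W^u_1(w)$. But $\Gamma_{v,\eps}\subset\B(\mu)$ and $\B(\mu)$ is saturated by stable manifolds (indeed by Pesin stable manifolds, since two points on the same Pesin stable manifold have the same Birkhoff limits), so the image lies in $\B(\mu)$; hence $\B(\mu)\cap W^u_1(w)$ has positive volume. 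The main obstacle, as indicated, is the bookkeeping around Pesin blocks: making the choice of block, the lower bound $\delta_\ell$ on Pesin stable sizes, and the threshold $\eps(v)$ mutually compatible, and verifying that the transversality and complementary-dimension count ($\dim W^u + (\dim W^s+\mathrm{codim}\,\F) = \dim\hM$) genuinely yields unique transverse intersection points varying measurably in $z$; once that is set up, absolute continuity of $\W^s_{\mathrm{Pes}}$ does the rest.
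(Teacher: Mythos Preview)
Your proposal is correct and follows essentially the same approach as the paper, which combines Birkhoff's theorem and Pesin theory into a single full-$\mu$ set $\Y_1$, disintegrates along unstable plaques via the Gibbs $u$-state property to produce $\Y$, and then dispatches the entire ``moreover'' part with a one-line appeal to absolute continuity of $\W^{cs}_{\mathrm{Pes}}$; your detailed treatment via Pesin blocks, uniform lower bounds on stable sizes, and explicit transversality is exactly the content hidden in that line. Your invocation of Lemma~\ref{intbasinleaflarge} is unnecessary here (the paper, and indeed your own second sentence, works directly from Birkhoff plus the Lebesgue disintegration), but it is harmless.
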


\begin{proof}
Pesin's theory and Birkhoff's theorem imply the existence of Borel subset $\Y_1\dans\hM$ included in the basin of $\mu$ such that every point of $\Y_1$ possesses a Pesin stable manifold.

Since $\mu$ is a Gibbs $u$-state, inside a foliated chart for $\W^u$ the conditional measure of $\mu$ in local unstable manifolds are equivalent to the Lebesgue measure. This implies in particular that there exists a Borel subset $\Y_2\dans\hM$ full for $\mu$ such that for every $v\in\Y_2$, $\Y_1\cap W^u_1(v)$ has full volume in $W^u_1(v)$.

The remaining part of the lemma follows directly from the absolute continuity of $\W^{cs}_{Pes}$.
\end{proof}

\paragraph{The key lemma --} 
We are now ready to state the main ingredient of the proof of Proposition \ref{bijcorminimalgibbsstate}. In the sequel we adopt the following notation: if $X\dans\hM$ then $\Cl(X)$ denotes its closure.

\begin{lemma}
\label{keylemma}
Assume that every Gibbs $u$-state of $G_t$ has a negative transverse Lyapunov exponent. Let $\mu$ be an ergodic Gibbs $u$-state. Then for every $v\in\Supp(\mu)$, $\mu$ is the only Gibbs $u$-state for $G_t$ supported inside $\Cl(\hL_v)$.
\end{lemma}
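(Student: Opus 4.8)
The goal is to show that if $\mu$ is an ergodic Gibbs $u$-state (with negative transverse exponent) then no other Gibbs $u$-state can be supported inside $\Cl(\hL_v)$ for any $v\in\Supp(\mu)$. The strategy is to exploit the geometry established in Lemma \ref{geomproperty} together with the Pesin-theoretic openness statement of Lemma \ref{basinPesinstable}: in a transversally conformal foliated negatively curved setting, the basin of $\mu$ ``spreads out'' along whole leaves, so it cannot avoid the support of a competing Gibbs $u$-state that lives in the same leaf closure.

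More precisely, I would proceed as follows. First, let $\mu'$ be any Gibbs $u$-state supported in $\Cl(\hL_v)$; by passing to an ergodic component (which is again a Gibbs $u$-state by Theorem \ref{reconstruirelesetatsdeugibbs}, and which has negative transverse exponent by hypothesis) I may assume $\mu'$ is ergodic. The key is to find a point $w$ lying in both $\Supp(\mu')$ and close enough to a ``good'' point of $\Supp(\mu)$ that Lemma \ref{basinPesinstable} applies. Using that $\mu'$ is an ergodic Gibbs $u$-state, $\B(\mu')$ has full volume inside the unstable plaque of a $\mu'$-typical point, so $\Supp(\mu')$ is $\W^{cu}$-saturated (Proposition \ref{srbugibbs}, Item 3); in particular it contains an entire center-unstable leaf $W^{cu}(w_0)\subset\Cl(\hL_v)$. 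Next, because $\mu$ is an ergodic Gibbs $u$-state, Lemma \ref{intbasinleaflarge} gives a full-$\mu$ set $\X$ such that $\B(\mu)\cap\hL_{v'}$ has full volume in $\hL_{v'}$ for $v'\in\X$. Since $\Supp(\mu)\subset\hM$ and $v\in\Supp(\mu)$, I can pick $v'\in\X$ arbitrarily close to $v$, and then choose a point of $\X$ inside the leaf $\hL_v$ itself (or its closure) whose unstable plaque meets the neighborhood of $w_0$. The heart of the matter is then: apply the ``moreover'' clause of Lemma \ref{basinPesinstable} at a suitable $\mu$-typical point $v^\star$, getting $\eps>0$ such that $\B(\mu)$ meets $W^u_1(w)$ in positive volume whenever $\dist(v^\star,w)\le\eps$; since $W^{cu}(w_0)\cap\Cl(\hL_v)\cap\Supp(\mu')$ accumulates on all of $\Supp(\mu)$ (leaves of $\W^{cu}$ are dense in the relevant minimal piece, or one uses minimality of the $\W^{cu}$-minimal set inside $\Cl(\hL_v)$), there is a point $w$ in $W^u_1$ of such a point with $w\in\Supp(\mu')$ and $\dist(v^\star,w)\le\eps$. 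Then $\B(\mu)\cap W^u_1(w)$ has positive volume; but $W^u_1(w)\subset\Supp(\mu')$ which is $\W^{cu}$-saturated, and $\mu'$ being a Gibbs $u$-state with $\B(\mu')$ of full unstable-plaque volume forces $\B(\mu)\cap\B(\mu')\ne\vide$ — impossible, since basins of distinct measures are disjoint. Hence $\mu=\mu'$.

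I would organize the write-up around the single chain: $v\in\Supp(\mu)$ $\Rightarrow$ by density inside the minimal set and Lemma \ref{basinPesinstable} one finds $w\in\Supp(\mu')$ with $\B(\mu)\cap W^u_1(w)$ of positive Lebesgue measure $\Rightarrow$ (since $\Supp(\mu')$ is $\W^{cu}$-saturated and $\mu'$ is a Gibbs $u$-state, a.e.\ point of $W^u_1(w)$ is in $\B(\mu')$) $\Rightarrow$ $\B(\mu)\cap\B(\mu')\ne\vide$ $\Rightarrow$ $\mu=\mu'$.

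**The main obstacle.** The delicate point is ensuring that one can place a point $w$ simultaneously in $\Supp(\mu')$ \emph{and} within the $\eps$-neighborhood of a point $v^\star$ where the Pesin estimate of Lemma \ref{basinPesinstable} holds, while $\eps$ depends on $v^\star$ and not uniformly. This requires exploiting that $v\in\Supp(\mu)$ can be approximated by $\mu$-typical points $v^\star$ of the good set $\Y$, and that $\Supp(\mu')\cap\Cl(\hL_v)$ is nonempty and in fact $\W^{cu}$-saturated and ``fills up'' a minimal set that $v$ belongs to — so one must invoke minimality of the $\W^{cu}$-minimal (or $\hcF$-minimal) set contained in $\Cl(\hL_v)\cap\Supp(\mu')$, together with the fact that $v\in\Supp(\mu)\subset\Cl(\hL_v)$ forces these two minimal sets to intersect, hence coincide. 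Making this topological bookkeeping precise — the interplay between $\Cl(\hL_v)$, the $\W^{cu}$-saturation of supports, and the local Pesin neighborhoods — is the real content; once it is in place, the disjointness of basins closes the argument immediately.
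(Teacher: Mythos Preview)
Your overall architecture---combine Lemma~\ref{intbasinleaflarge} with the Pesin transversality of Lemma~\ref{basinPesinstable} to force $\B(\mu)\cap\B(\mu')\neq\vide$---is the same as the paper's. But you have applied Lemma~\ref{basinPesinstable} to the wrong measure, and this introduces a real gap.

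The hypothesis is asymmetric: we only know $\Supp(\mu')\subset\Cl(\hL_v)$ for $v\in\Supp(\mu)$. This says the leaf $\hL_v$ accumulates on every point of $\Supp(\mu')$; it says \emph{nothing} about points of $\Supp(\mu')$ accumulating on $\Supp(\mu)$. Your plan applies Lemma~\ref{basinPesinstable} to $\mu$ at a $\mu$-typical point $v^\star\in\Supp(\mu)$, and then needs some $w\in\Supp(\mu')$ with $\dist(v^\star,w)\le\eps(v^\star)$. If $\mu\neq\mu'$ then by Proposition~\ref{srbugibbs} the supports are \emph{disjoint} closed $\W^{cu}$-minimal sets; there is no reason any point of $\Supp(\mu')$ lies within $\eps(v^\star)$ of $v^\star$. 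Your attempted fix---``these two minimal sets intersect, hence coincide''---is circular: their coincidence is exactly what you are trying to prove, and nothing in the hypotheses forces an intersection. A secondary issue: the assertion ``for $w\in\Supp(\mu')$, a.e.\ point of $W^u_1(w)$ lies in $\B(\mu')$'' holds only for $\mu'$-\emph{typical} $w$, not for arbitrary $w\in\Supp(\mu')$.

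The paper exploits the asymmetry the other way. It first observes (via $\W^{cu}$-minimality of $\Supp(\mu)$) that $\Cl(\hL_v)=K$ is independent of $v\in\Supp(\mu)$, so one may take $v$ in the full-measure set $\X$ of Lemma~\ref{intbasinleaflarge}. It then applies Lemma~\ref{basinPesinstable} to~$\mu'$, obtaining a $\mu'$-good point $v'\in\Supp(\mu')\subset K$. Now the density of $\hL_v$ in $K$---which \emph{is} available---produces $v''\in\hL_v\cap\B(\mu)$ as close to $v'$ as one likes. The Pesin holonomy of Lemma~\ref{basinPesinstable} (for $\mu'$, based at $v'$) then gives $\Leb^u(\B(\mu')\cap W^u_1(v''))>0$; saturating this set by $\W^{cs}$ inside $\hL_v$ and using absolute continuity yields a positive-volume subset of $\hL_v$ contained in $\B(\mu')$, which must meet $\B(\mu)$ since the latter has full volume in $\hL_v$. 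Swapping which measure receives the Pesin lemma is the single change that makes the argument go through.
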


\begin{proof}
First note that, by Proposition \ref{srbugibbs}, $\Supp(\mu)$ is a minimal set for $\W^{cu}$ which implies that for every $v\in\Supp(\mu)$ we have $\Supp(\mu)\dans\Cl(\hL_v)$. We deduce that $\Cl(\hL_v)$ does not depend on $v\in\Supp(\mu)$ and in particular we can assume that $v\in\X$, the set constructed in Lemma \ref{intbasinleaflarge}.

For such a $v\in\X$ set $K=\Cl(\hL_v)$ and let $\mu'$ be an ergodic Gibbs $u$-state satisfying $\Supp(\mu')\dans K$ (see Theorem \ref{reconstruirelesetatsdeugibbs}). We will prove that $\mu=\mu'$, and the lemma will follow.

First note that by hypothesis $\mu'$ has a negative transverse Lyapunov exponent so we can apply Lemma \ref{basinPesinstable} to $\mu'$. It provides an element $v'\in K$ such that $\B(\mu')$ contains a full volume subset of $W^u_{loc}(v')$, each point of which admitting a Pesin stable manifold.

Because $v\in\X$, Lemma \ref{intbasinleaflarge} provides $v''\in\hL_v\cap\B(\mu)$ arbitrarily close to $v'$.

Using one more time Lemma \ref{basinPesinstable} we find that $\Gamma=\B(\mu')\cap W^u_1(v'')$ has positive volume in $W^u(v'')$.

The basin of any ergodic $G_t$-invariant measure is $\W^{cs}$-saturated so $W^{cs}(\Gamma)\dans\B(\mu')$. Finally use the absolute continuity of $\W^{cs}$ inside $\hL_{v''}=\hL_v$ to prove that $W^{cs}(\Gamma)$ has positive volume in $\hL_v$. Since by Lemma  \ref{intbasinleaflarge} $\B(\mu)$ has full volume in $\hL_v$ we conclude that $\B(\mu')=\B(\mu)$ and hence that $\mu=\mu'$.
\end{proof}

\paragraph{Proof of Proposition \ref{bijcorminimalgibbsstate} --} We assume in all the following that all Gibbs $u$-states for $G_t$ have negative transverse Lyapunov exponents. We divide the proof of Proposition \ref{bijcorminimalgibbsstate} into two halfs.

\begin{lemma}
\label{firsthalf}
Let $K$ be a minimal set of $\hcF$. There exists a unique Gibbs $u$-state supported inside $K$.
\end{lemma}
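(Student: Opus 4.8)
The plan is to prove existence and uniqueness separately, in both cases reducing to results already established, so that the only genuinely new input is bookkeeping about minimal sets of $\hcF$.

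For existence, I would fix a point $v\in K$ and take the local unstable manifold $D^u=W^u_{loc}(v)$. Since $K$ is $\hcF$-saturated we have $D^u\subset\hL_v\subset K$, and since each leaf of $\hcF$ is $G_t$-invariant the set $K$ is closed and $G_t$-invariant; hence every measure
$$\mu_T=\frac{1}{T}\int_0^T G_{t\ast}\left(\frac{\Leb^u_{|D^u}}{\Leb^u(D^u)}\right)dt$$
is supported in $K$. By the first item of Theorem \ref{reconstruirelesetatsdeugibbs} any weak-$\ast$ accumulation point $\mu$ of this family is a Gibbs $u$-state, and being a limit of probability measures carried by the closed set $K$, it is itself supported in $K$. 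Replacing $\mu$ by one of its ergodic components, which by the second item of Theorem \ref{reconstruirelesetatsdeugibbs} is again a Gibbs $u$-state supported in $K$, we may moreover assume that $\mu$ is ergodic. This settles existence.

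For uniqueness, let $\nu$ be an arbitrary Gibbs $u$-state supported in $K$ and let $\mu$ be the ergodic Gibbs $u$-state supported in $K$ produced above. Pick a point $v\in\Supp(\mu)\subset K$. Because $K$ is a minimal set of $\hcF$ and $\hL_v\subset K$, we have $\Cl(\hL_v)=K$, so in particular $\Supp(\nu)\subset K=\Cl(\hL_v)$. Now Lemma \ref{keylemma}, applied to the ergodic Gibbs $u$-state $\mu$ at the point $v\in\Supp(\mu)$, asserts that $\mu$ is the \emph{only} Gibbs $u$-state supported inside $\Cl(\hL_v)=K$; therefore $\nu=\mu$, which proves that the Gibbs $u$-state supported in $K$ is unique.

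I do not expect a serious obstacle here: the substantial work is already contained in Theorem \ref{reconstruirelesetatsdeugibbs}, Proposition \ref{srbugibbs} and, above all, the key Lemma \ref{keylemma}. The only points needing a line of justification are that a minimal set of $\hcF$ is closed and $G_t$-invariant (being a union of $G_t$-invariant leaves) and that $\Cl(\hL_v)=K$ for every $v\in K$ when $K$ is $\hcF$-minimal, which is exactly what allows Lemma \ref{keylemma} to be invoked with $\Cl(\hL_v)$ read as $K$.
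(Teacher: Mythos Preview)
Your proof is correct and follows essentially the same approach as the paper: existence is obtained by pushing Lebesgue measure on an unstable disc inside the closed $G_t$-invariant set $K$ and invoking Theorem \ref{reconstruirelesetatsdeugibbs}, while uniqueness comes directly from Lemma \ref{keylemma} after observing that $\Cl(\hL_v)=K$ by minimality. The paper's version is simply a terser statement of the same argument.
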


\begin{proof}
A minimal set $K$ is closed and $\hcF$-saturated. Therefore applying in $K$ the proof of Theorem \ref{reconstruirelesetatsdeugibbs} provides that Gibbs $u$-states supported in $K$ exist and that ergodic components of such a measure are also supported in $K$. Lemma \ref{keylemma} immediately implies the uniqueness of the Gibbs $u$-state.
\end{proof}

\begin{lemma}
\label{secondhalf}
Every ergodic Gibbs $u$-state is supported inside a minimal set of $\hcF$.
\end{lemma}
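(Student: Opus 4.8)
The plan is to take an ergodic Gibbs $u$-state $\mu$ and show that its support is already a minimal set for $\hcF$, or at least contains one which supports $\mu$. First I would recall from Proposition \ref{srbugibbs} that $\Supp(\mu)$ is a minimal set $K^{cu}$ for the center-unstable foliation $\W^{cu}$. The strategy is to promote ``$\W^{cu}$-minimal'' to ``$\hcF$-minimal''. Since $\Supp(\mu)$ is $\W^{cu}$-saturated and closed, it is in particular $G_t$-invariant, and it is a non-empty closed $\hcF$-invariant set (being a union of leaves of a subfoliation of $\hcF$ is not automatic, but $\Supp(\mu)$ is saturated by $\W^{cu}$ hence, taking closures of the images of the saturated sets, we can pass to an $\hcF$-minimal subset). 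So choose an $\hcF$-minimal set $K \subset \Cl(\hL_v)$ for some $v \in \Supp(\mu)$; such a $K$ exists by Zorn's lemma applied to the family of non-empty closed $\hcF$-saturated subsets of $\Cl(\hL_v)$.

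Next I would apply Lemma \ref{firsthalf} to this $\hcF$-minimal set $K$: it carries a unique Gibbs $u$-state, call it $\mu'$. By Theorem \ref{reconstruirelesetatsdeugibbs}, $\mu'$ has an ergodic component which is again a Gibbs $u$-state supported in $K$; since $K$ is $\hcF$-minimal it equals $\Cl(\hL_w)$ for every $w \in K$, so by minimality $\Cl(\hL_w)$ is constant on $K$ and in particular $K \subset \Cl(\hL_v)$ gives $\Cl(\hL_w)\subset \Cl(\hL_v)$. Now the key point: since $v\in\Supp(\mu)$ and $K$ is an $\hcF$-minimal subset of $\Cl(\hL_v)$, and $\mu'$ is supported in $K\subset \Cl(\hL_v)$, Lemma \ref{keylemma} applied to the ergodic Gibbs $u$-state $\mu$ (whose support contains $v$) forces $\mu = \mu'$ because $\mu'$ is a Gibbs $u$-state supported inside $\Cl(\hL_v)$. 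Hence $\Supp(\mu) = \Supp(\mu') \subset K$, and since $K$ is $\hcF$-minimal while $\Supp(\mu)$ is a non-empty closed $\hcF$-saturated (it is $\W^{cu}$-saturated, and by ergodicity and the structure of $u$-Gibbs states it is in fact a union of entire $\hcF$-leaves closures) subset of $K$, we get $\Supp(\mu) = K$, so $\mu$ is supported inside the $\hcF$-minimal set $K$.

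The main obstacle I anticipate is the verification that $\Supp(\mu)$, a priori only known to be $\W^{cu}$-saturated, behaves well with respect to $\hcF$: one must ensure there is an $\hcF$-minimal set inside $\Cl(\hL_v)$ and argue that the equality $\mu=\mu'$ upgrades to $\Supp(\mu)=K$. The cleanest route is probably to bypass any subtlety about $\hcF$-saturation of $\Supp(\mu)$ and argue directly: pick $v\in\Supp(\mu)$, let $K\subset\Cl(\hL_v)$ be any $\hcF$-minimal set, produce via Lemma \ref{firsthalf} the unique Gibbs $u$-state $\mu'$ on $K$, note $\Supp(\mu')\subset K\subset\Cl(\hL_v)$, and invoke Lemma \ref{keylemma} to conclude $\mu=\mu'$; then $\Supp(\mu)=\Supp(\mu')\subset K$, which is exactly the assertion that $\mu$ is supported inside a minimal set of $\hcF$. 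This is short once Lemmas \ref{keylemma} and \ref{firsthalf} are in hand, and indeed combining Lemmas \ref{firsthalf} and \ref{secondhalf} yields Proposition \ref{bijcorminimalgibbsstate}.
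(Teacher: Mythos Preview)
Your ``cleanest route'' in the final paragraph is exactly the paper's proof: pick $v\in\Supp(\mu)$, find an $\hcF$-minimal set $K\subset\Cl(\hL_v)$, use Lemma \ref{firsthalf} to get a Gibbs $u$-state $\mu'$ supported in $K$, then invoke Lemma \ref{keylemma} to force $\mu=\mu'$, whence $\Supp(\mu)\subset K$. The earlier paragraphs of your proposal contain some unnecessary detours (the discussion of $\hcF$-saturation of $\Supp(\mu)$ and the attempt to upgrade to $\Supp(\mu)=K$ are not needed for the statement), but you correctly identify and discard them yourself.
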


\begin{proof}
Let $\mu$ be an ergodic Gibbs $u$-state for $G_t$. We will prove that for every $v\in\Supp(\mu)$, the set $\Cl(\hL_v)$ is minimal for $\hcF$.

Let $v\in\Supp(\mu)$. The set $\Cl(\hL_v)$ is closed and $\hcF$-saturated so it contains $K$, a minimal set for $\hcF$. By Lemma \ref{firsthalf} there is a Gibbs $u$-state $\mu'$ supported inside $K\dans\Cl(\hL_v)$.

But by Lemma \ref{keylemma} $\mu$ is the unique Gibbs $u$-state supported inside $\Cl(\hL_v)$ so it must be the case that $\mu=\mu'$. In particular $\mu$ is supported inside a minimal set of $\F$. 
\end{proof}

\section{Some partially hyperbolic foliated geodesic flows}\label{phfoliatedgeoflows}

In this section, we define the notion of \emph{domination} of projective representations introduced recently in \cite{DeroinTholozan,GKW}, prove the following result, and explain how it implies Theorem \ref{domination}. 

\begin{maintheorem}
\label{equivalencedomination}
Let $(\Pi,M,\Sigma,\F)$ be a foliated $\R\PP^1$-bundle with projective holonomy over a closed surface $\Sigma$ endowed with a hyperbolic metric $m$. Endow $M$ with  an admissible Riemannian metric. Let $\rho:\pi_1(\Sigma)\to\PSL_2(\R)$ denote a Fuchsian representation associated to $m$ and $\hol:\pi_1(\Sigma)\to\PSL_2(\R)$ denote the holonomy representation of $\F$. Then the foliated geodesic flow $G_t$ is partially hyperbolic if and only if $\rho$ dominates $\hol$.
\end{maintheorem}

\subsection{The foliated geodesic flow as a projective cocycle}
\label{suspensioncocycles}
Before proceeding to the proof, let us explain the terms appearing in Theorem \ref{equivalencedomination}.
\paragraph{Fuchsian representation --} We will consider the upper half plane $\Hyp$ endowed with its \emph{Poincar{\'e} metric} $ds^2=(dx^2+dy^2)/y^2$. The group $\PSL_2(\R)$ is identified with the group of its direct isometries.

Let $\Sigma$ be a closed Riemann surface of genus higher than $2$. By uniformization, a smooth hyperbolic metric $m$ on $\Sigma$ gives rise to a \emph{Fuchsian representation}, i.e. a faithful and discrete representation $\rho:\pi_1(\Sigma)\to \PSL_2(\R)$ which is well defined up to conjugacy by an element of $\PSL_2(\R)$. Say two such metrics on $\Sigma$ represent the same point in the \emph{Teichm\"uller space} if one is the image of the other by a diffeomorphism isotopic to the identity. It is equivalent to having their Fuchsian representations conjugated in $\PSL_2(\R)$.

\paragraph{Holonomy representation --} Consider a foliated $\R\PP^1$-bundle with projective holonomy $(\Pi,M,\Sigma,\F)$. It is obtained by the suspension of a representation $\hol:\pi_1(\Sigma)\to\PSL_2(\R)$ (see \cite{CL} for the definition of suspension) called the \emph{holonomy representation} of the foliation. 

Note that holonomy has the following topological interpretation. If $c$ is a closed path on $\Sigma$ and $\gamma\in\pi_1(\Sigma)$ denotes its homotopy class, then there is, for every $x$ belonging to the fiber of $p=c(0)$ a unique lift of $c$ starting at $x$. Then the ending point of this lift depends only on $\gamma$ and is equal to $\hol(\gamma)^{-1}(x)$.

We will endow $M$ with an \emph{admissible metric}, i.e. a smooth Riemannian metric on $M$ whose restriction in each leaf is locally isometric to $m$. Such a metric is conformally equivalent to the usual angle metric in the fibers because the codimension of $\F$ equals $1$.

\paragraph{Foliated geodesic flow as a cocycle --} The differential of $\Pi$ induces an $\R\PP^1$-bundle $\Pi_{\ast}:\hM\to T^1\Sigma$ which is transverse to the foliation $\hcF$. For $w\in T^1\Sigma$ we set $F_{\ast,w}=\Pi_{\ast}^{-1}(w)$.

Since $\Pi$ is a local isometry when restricted to the leaves, it sends geodesics of the leaves on geodesics of the base. As a consequence the foliated geodesic flow $G_t:\hM\to\hM$ projects down via $\Pi_{\ast}$ to the geodesic flow of $T^1\Sigma$ which we denote by $g_t:T^1\Sigma\to T^1\Sigma$. In particular \emph{it preserves the fibers} of $\Pi_{\ast}$ and for every $w\in T^1\Sigma$ and $t\in\R$ the map
$$A_t(w)=(G_t)_{|F_{\ast,w}}:F_{\ast,w}\to F_{\ast,g_t(w)}$$
identifies with the holonomy along the orbit segment $g_{[0,t]}(w)$ and therefore belongs to $\PSL_2(\R)$. Moreover it satisfies the cocycle relation
$$A_{t_1+t_2}(w)=A_{t_1}(g_{t_2}(w))A_{t_2}(w).$$
All this implies that the foliated geodesic flow $G_t$ is a \emph{projective cocycle} over the geodesic flow $g_t$.

\subsection{Domination of representations}
\label{Domrepr}

Recently a notion of domination of representations appeared in the theory of $3$-dimensional Anti-de Sitter geometry \cite{DeroinTholozan,GKW}. 


\paragraph{Domination --} The \emph{translation length} of an element $P\in \PSL_2(\R)$ is by definition
$$l_P=\Inf_{z\in\Hyp}\dist(P\,z,z)\geq 0.$$

\begin{rem}
\label{translength}
If $P$ is an \emph{elliptic} element (i.e. conjugated to a rotation) it has a fixed point in $\Hyp$ and $l_P=0$.

If $P$ is a \emph{parabolic or hyperbolic} element of $\PSL_2(\R)$ (i.e. respectively conjugated to a translation or a homothety) then $l_P$ coincide with the modulus of the logarithm of the derivative at any of its fixed points. In particular it vanishes in the case $P$ is parabolic.
\end{rem}

\begin{lemma}
\label{indepexponent}
Let $P,Q\in\PSL_2(\R)$ such that $Q$ is hyperbolic. Then for every $k\in\Z\moins\{0\}$
$$\frac{l_{P^k}}{l_{Q^k}}=\frac{l_P}{l_Q}.$$
\end{lemma}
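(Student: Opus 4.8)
The plan is to use the explicit description of translation lengths given in Remark \ref{translength}, which reduces the statement to an elementary computation about logarithms of derivatives at fixed points. Since $Q$ is hyperbolic, $Q^k$ is hyperbolic for every $k\in\Z\moins\{0\}$, so $l_{Q^k}$ is well defined and positive, and the quotients make sense.

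First I would treat the right-hand side as a definition of what must be computed and recall that a hyperbolic element $Q\in\PSL_2(\R)$ is conjugated to a homothety $z\mapsto\lambda z$ with $\lambda>1$; by Remark \ref{translength}, $l_Q=\log\lambda$. Then $Q^k$ is conjugated (by the same element) to $z\mapsto\lambda^k z$, hence $l_{Q^k}=|k|\log\lambda=|k|\,l_Q$. This already gives $l_{Q^k}/l_Q=|k|$ for all $k\in\Z\moins\{0\}$.

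Next I would do the same analysis for $P$, splitting into the three cases of Remark \ref{translength}. If $P$ is elliptic or parabolic, then so is $P^k$ (powers of elliptic elements are elliptic, powers of parabolic elements are parabolic), so $l_P=l_{P^k}=0$ and both sides of the claimed identity are $0$; here it is essential that $l_{Q^k}\neq 0$ so that $0/l_{Q^k}$ is genuinely $0$. If $P$ is hyperbolic, write $l_P=\log\mu$ with $\mu>1$ as above, so that $l_{P^k}=|k|\log\mu=|k|\,l_P$. In every case $l_{P^k}=|k|\,l_P$, and therefore
$$\frac{l_{P^k}}{l_{Q^k}}=\frac{|k|\,l_P}{|k|\,l_Q}=\frac{l_P}{l_Q},$$
which is the desired equality.

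There is no serious obstacle here; the only thing to be careful about is that the statement does not assume $P$ is hyperbolic, so one must cover the elliptic and parabolic cases separately and observe that both sides vanish. If one preferred to avoid the case analysis, an alternative is to note the general formula $l_{P^k}=|k|\,l_P$ directly from the definition $l_P=\Inf_{z\in\Hyp}\dist(Pz,z)$ together with the fact that the infimum is attained along a common invariant geodesic (the axis) when $P$ is hyperbolic and equals $0$ otherwise; but the conjugacy-to-normal-form argument is the cleanest and is what I would write up.
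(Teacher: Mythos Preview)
Your proof is correct and follows essentially the same approach as the paper, which simply notes that $l_{P^k}=|k|\,l_P$ is a direct consequence of Remark \ref{translength} and concludes. Your version is more careful in spelling out the elliptic/parabolic cases and in writing $|k|$ rather than $k$, but the underlying idea is identical.
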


\begin{proof}
It is a fairly direct application of Remark \ref{translength} that for every $P\in\PSL_2(\R)$ and $k\in\Z$, $l_{P^k}=kl_P$. The lemma follows.
\end{proof}

The \emph{marked length spectrum} of a projective representation $\phi:\pi_1(\Sigma)\to \PSL_2(\R)$ is by definition the collection $\ell_{\phi}=(l_{\phi(\gamma)})_{\gamma\in\pi_1(\Sigma)}$. Say $\phi_1$ \emph{dominates} $\phi_2$ if there exists $\kappa<1$ such that $\ell_{\phi_2}<\kappa\ell_{\phi_1}$.

\paragraph{Domination by a Fuchsian representation --} We will make use of the following theorem proven independently and with different methods by Gu{\'e}ritaud-Kassel-Wolff in \cite{GKW} (to which we refer for details about the Euler class) and by Deroin-Tholozan in \cite{DeroinTholozan}.

\begin{theorem}
\label{dominationGKWDT}
Let $\Sigma$ be a closed Riemann surface of genus $\g\geq 2$.
\begin{enumerate}
\item Let $\hol:\pi_1(\Sigma)\to \PSL_2(\R)$ be a projective representation with $|\Eu(\hol)|<2\g-2$. Then it is dominated by a Fuchsian representation $\rho:\pi_1(\Sigma)\to \PSL_2(\R)$.
\item Reciprocally any Fuchsian representation $\rho:\pi_1(\Sigma)\to \PSL_2(\R)$ dominates some non-Fuchsian representation whose Euler class can be prescribed in $\{3-2\g,...,0,...,2\g-3\}.$
\end{enumerate}
\end{theorem}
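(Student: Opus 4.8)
\noindent\textbf{Idea of the proof (following \cite{GKW,DeroinTholozan}).}
Both items are deep results, so I only outline the strategy. Everything rests on a \emph{Lipschitz criterion} for domination: if there is a $(\rho,\hol)$-equivariant map $f:\Hyp\to\Hyp$ which is $\kappa$-Lipschitz for the hyperbolic metric determined by $\rho$, with $\kappa<1$, then $\rho$ dominates $\hol$. Indeed, for $\gamma\in\pi_1(\Sigma)$ of infinite order, restricting $f$ to a fundamental segment of length $l_{\rho(\gamma)}$ on the axis of $\rho(\gamma)$ yields a path from some $x$ to $\hol(\gamma)x$, whence $l_{\hol(\gamma)}\leq\kappa\, l_{\rho(\gamma)}$, i.e. $\ell_{\hol}<\kappa\,\ell_{\rho}$. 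By the Gu{\'e}ritaud-Kassel computation of the minimal Lipschitz constant of an equivariant map as $\sup_{\gamma}l_{\hol(\gamma)}/l_{\rho(\gamma)}$ (valid when $\rho$ is Fuchsian), this criterion is in fact an equivalence; and, through the Kulkarni-Raymond classification of closed anti-de Sitter $3$-manifolds together with Kassel's properness criterion, ``$\rho$ dominates $\hol$'' is the same as ``$(\rho,\hol)$ is the holonomy pair of a compact anti-de Sitter $3$-manifold''. So each item reduces to an existence statement of this geometric kind.

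For item 1, given a non-Fuchsian $\hol$ --- equivalently $|\Eu|<2\g-2$, by Goldman --- the plan is to fix a Fuchsian $\rho$ (i.e. a hyperbolic structure $m$ on $\Sigma$) and take for $f$ a $(\rho,\hol)$-equivariant harmonic map $\Hyp\to\Hyp$, whose existence follows from the standard theory once $\hol$ is put in reductive form (the remaining cases being handled by approximation). One then bounds its Lipschitz constant: a Bochner/maximum-principle argument for harmonic maps between hyperbolic surfaces keeps the pointwise Lipschitz constant below $1$ except when $f$ is, up to isometry, a hyperbolic covering --- which is ruled out precisely because $\hol$ is \emph{not} Fuchsian --- and a suitable choice of $m$ then forces the uniform constant strictly below $1$, so the criterion gives $\rho\succ\hol$. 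Alternatively one works on the anti-de Sitter side as in \cite{GKW}, realizing $\hol$ by a folded hyperbolic structure or deforming from a Fuchsian $\rho$ of very large systole to produce the desired compact AdS $3$-manifold.

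For item 2, fix a Fuchsian $\rho$, with hyperbolic quotient $(\Sigma,m_{\rho})$, and an integer $e\in\{3-2\g,\dots,2\g-3\}$. The plan is to realize $e$ by a \emph{folding} construction in the sense of \cite{GKW}: one folds $(\Sigma,m_{\rho})$ so as to obtain a folded hyperbolic structure on $\Sigma$ whose holonomy $\hol_e$ has Euler number $e$ (the folding being arranged so that all non-maximal values are attained), and which comes equipped with a tautological $(\rho,\hol_e)$-equivariant developing-type map $\Hyp\to\Hyp$ that is $1$-Lipschitz and strictly contracting. One must check that the folding is \emph{everywhere} strictly contracting, so that by the Gu{\'e}ritaud-Kassel formula $\sup_{\gamma}l_{\hol_e(\gamma)}/l_{\rho(\gamma)}<1$; then $\rho\succ\hol_e$, and $\hol_e$ is non-Fuchsian since $|e|<2\g-2$.

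The routine points are the axis estimate and the bookkeeping of Euler numbers; the genuine difficulty --- and the reason this is a theorem of \cite{GKW,DeroinTholozan} and not of the present paper --- is the \emph{uniform quantitative gap}: producing a Lipschitz constant, equivalently a domination constant $\kappa$, that is \emph{strictly} below $1$ and uniform over all of $\pi_1(\Sigma)$. For item 1 this rests on the rigidity of equivariant harmonic maps between hyperbolic surfaces (or on the properness analysis of the space of AdS deformations), and for item 2 on arranging the folding to be simultaneously everywhere contracting and to realize exactly the prescribed non-maximal Euler number. These are the substantive inputs of the cited works.
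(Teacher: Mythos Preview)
The paper does not prove this theorem at all: it is stated as a black box, with the sentence ``We will make use of the following theorem proven independently and with different methods by Gu{\'e}ritaud--Kassel--Wolff in \cite{GKW} \ldots\ and by Deroin--Tholozan in \cite{DeroinTholozan},'' and no argument is given. So there is nothing in the paper to compare your sketch against.

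That said, your outline is a faithful summary of the strategies in the cited works: the Lipschitz criterion for domination, the equivariant harmonic map approach of \cite{DeroinTholozan} for item~1, and the folding construction of \cite{GKW} for item~2 (together with the AdS/properness interpretation). You are also right to flag the uniform gap $\kappa<1$ as the substantive point. If your goal is merely to match the paper, a one-line citation suffices; if you want to keep an explanatory sketch, what you have is accurate, though it should be clearly labeled as a sketch of results proved elsewhere rather than as a proof belonging to this paper.
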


This theorem, together with Theorem \ref{equivalencedomination} gives the fist two items of Theorem \ref{domination}. Proving the last one will be the goal of \S \ref{fuchsianfoliations}. First we need to define partial hyperbolicity.

\subsection{Partially hyperbolic foliated geodesic flows}

\subsubsection{Partially hyperbolic flows}
\label{phflows}

\paragraph{Definition --} A non-singular flow $\Phi_t:N\to N$ on a Riemannian manifold $N$ generated by a vector field $X$ is said to be \emph{partially hyperbolic} if there exists a decomposition of the normal bundle of $X$ of the form
$$\NN_X=E^s_{\NN}\oplus E^c_{\NN}\oplus E^u_{\NN}$$
and two constants $C,\lambda>0$ such that
\begin{enumerate}
\item the bundles $E^s_{\NN},E^c_{\NN},E^u_{\NN}$ are continuous and invariant by the linear Poincar{\'e} flow $\Psi_t:\NN_X\to\NN_X$ of $\Phi_t$;
\item for $x\in N$, $t\geq 0$ and every $v^s\in E^s_{\NN}(x)$ and $v^u\in E^u_{\NN}(x)$,

$$||\Psi_t(x)v^s||\leq C\exp(-\lambda t)||v^s||$$

$$||\Psi_{-t}(x)v^u||\leq C\exp(-\lambda t)||v^u||.$$

\item the decomposition is \emph{dominated} in the sense that for every $t>0$, $x\in N$ and $v^s\in E^s_{\NN}(x)$, $v^c\in E^c_{\NN}(x)$ and $v^u\in E^u_{\NN}(x)$

$$\frac{||\Psi_t(x)v^c||}{||\Psi_t(x)v^u||}\leq C\exp(-\lambda t)\,\,\,\,\,\,\,\,\,\,\,\,\,\,\,\,\,\,\,\,\,\,\,\,\,\,\,\,\textrm{and}\,\,\,\,\,\,\,\,\,\,\,\,\,\,\,\,\,\,\,\,\,\,\,\,\,\,\,\,\frac{||\Psi_{-t}(x)v^c||}{||\Psi_{-t}(x)v^s||}\leq C\exp(-\lambda t).$$
\end{enumerate}

\paragraph{Criterion for domination --} We will give a criterion for partial hyperbolicity due to Ma{\~n}{\'e} \cite{Mane_ergodic} (see also \cite[Proposition 3.4]{KoPo} for a similar statement).

\begin{lemma}
\label{criteriondomination}
Suppose the linear Poincar{\'e} flow $\Psi_t:\NN_X\to\NN_X$ preserves a decomposition $\NN_X=E\oplus F$ such that for every $\Phi_t$-invariant probability measure $\mu$
$$\lambda^+_E(\mu)<\lambda^-_F(\mu),$$
where $\lambda^+_E(\mu)$ and $\lambda^-_F(\mu)$ stand respectively for the greatest Lyapunov exponent of $\mu$ along $E$ and the lowest Lyapunov exponent of $\mu$ along $F$.

Then the decomposition $\NN_X=E\oplus F$ is dominated.
\end{lemma}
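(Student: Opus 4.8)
The statement is Mañé's classical domination criterion, transposed to the linear Poincaré flow, and the plan is to follow the standard contradiction argument via the ergodic theorem. Suppose, for contradiction, that the splitting $\NN_X=E\oplus F$ is \emph{not} dominated. Then for every $n\in\N$ there exist a point $x_n\in N$, a time $t_n>0$, and unit vectors $v_n^E\in E(x_n)$, $v_n^F\in F(x_n)$ such that
$$\frac{\|\Psi_{t_n}(x_n)v_n^E\|}{\|\Psi_{t_n}(x_n)v_n^F\|}\geq e^{t_n/n}\cdot n,$$
or some equivalent quantitative failure of the domination inequality. The first technical point is that $t_n\to\infty$: if the times stayed bounded, the ratio would stay bounded by compactness of $N$ and continuity of $\Psi$, so the times must escape to infinity.

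\textbf{Building an invariant measure.} The key step is to average Dirac masses along these long orbit segments. Consider the probability measures
$$\mu_n=\frac{1}{t_n}\int_0^{t_n}\delta_{\Phi_s(x_n)}\,ds$$
on $N$, and pass to a weak$^\ast$ limit $\mu$ (using compactness of $N$), which is automatically $\Phi_t$-invariant. Simultaneously one wants to keep track of the exponential growth rates of $\Psi$ along $E$ and along $F$. The clean way to do this is to work with the functions $\varphi_E, \varphi_F\colon N\to\R$ that record the infinitesimal (co)expansion rates, e.g. $\varphi_E(x)=\max_{v\in E(x),\|v\|=1}\frac{d}{dt}\big|_{t=0}\log\|\Psi_t(x)v\|$ and similarly $\varphi_F$ with a min; these are continuous by continuity of the splitting and of $\Psi$. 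The failure of domination, after taking logarithms and dividing by $t_n$, gives a uniform lower bound on $\frac{1}{t_n}\int_0^{t_n}(\varphi_E-\varphi_F)(\Phi_s(x_n))\,ds$ that does not vanish in the limit, hence $\int_N(\varphi_E-\varphi_F)\,d\mu\geq 0$, i.e. $\int\varphi_E\,d\mu\geq\int\varphi_F\,d\mu$. The subtle point — and the place where a little care is needed — is that the top Lyapunov exponent along $E$ is $\int\varphi_E\,d\mu$ only after averaging along orbits and that one really controls $\log$ of a norm of a \emph{specific} chosen vector rather than the top exponent; the standard fix is either to use a subadditive/Kingman argument with the cocycle $\log\|\Psi_t|_E\|$ directly, or to pass to the exterior-power / norm cocycle so that the quantity $\frac{1}{t}\log\|\Psi_t(x)|_E\|$ is exactly a Birkhoff average of a continuous function plus a subadditive error. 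I would phrase it with the norm cocycles $\alpha_t^E(x)=\log\|\Psi_t(x)|_{E(x)}\|$ and $\alpha_t^F(x)=\log\mathbf{m}(\Psi_t(x)|_{F(x)})$ (conorm), both subadditive/superadditive, so that Kingman applies.

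\textbf{Deriving the contradiction.} By the ergodic decomposition, there is an ergodic component $\nu$ of $\mu$ for which $\lambda^+_E(\nu)\geq\lambda^-_F(\nu)$; indeed the top exponent along $E$ of $\mu$ is the $\mu$-average of those of its ergodic components, and similarly for the bottom exponent along $F$, so the inequality $\lambda^+_E(\mu)\geq\lambda^-_F(\mu)$ obtained above forces it on some ergodic component. This ergodic $\nu$ is a $\Phi_t$-invariant probability measure violating the hypothesis $\lambda^+_E(\nu)<\lambda^-_F(\nu)$, a contradiction. Therefore the splitting is dominated, which is what Lemma \ref{criteriondomination} asserts (with the constants $C,\lambda$ extracted from the now-established uniform inequality by a routine compactness argument, exactly as in the definition of a dominated splitting in \S\ref{phflows}).

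\textbf{Main obstacle.} The genuinely delicate point is the transition from the pointwise, orbit-segment-wise failure of domination to a statement about Lyapunov exponents of a limiting invariant measure: one must be careful that weak$^\ast$ convergence of $\mu_n$ controls Birkhoff averages of \emph{continuous} functions, whereas $\frac1t\log\|\Psi_t|_E\|$ is only subadditive, not a Birkhoff average. Handling this correctly — via Kingman's subadditive ergodic theorem applied to the norm and conorm cocycles, together with the observation that $t_n\to\infty$ and a diagonal/subsequence extraction — is the heart of the argument; everything else (invariance of $\mu$, ergodic decomposition, the final compactness step producing uniform $C,\lambda$) is routine. An alternative that sidesteps some of this is to quote the result directly from Mañé \cite{Mane_ergodic}, which is presumably what the paper does; but the sketch above is the self-contained route.
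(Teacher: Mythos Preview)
Your proposal is correct and follows essentially the same route as the paper: argue by contradiction, build an invariant measure as a weak$^\ast$ limit of empirical averages along orbit segments where domination fails, use (sub/super)additivity of the norm and conorm cocycles to bound them by Birkhoff sums of continuous functions, and obtain an invariant measure violating the Lyapunov-exponent hypothesis. The only presentational differences are that the paper first reduces to the time-$1$ map (promoting the resulting $\Phi_1$-invariant limit to a $\Phi_t$-invariant one by averaging over $t\in[0,1]$ at the end) and phrases the failure of domination at a \emph{single} point $x$ for all times $n$, whereas you work in continuous time with a sequence $(x_n,t_n)$; you also correctly flag as the ``main obstacle'' the subadditivity issue that the paper's sketch glosses over.
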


\begin{proof}
This argument is quite classical so we only give a glimpse of the proof. Using the invariance and the continuity of the decomposition it is enough to prove that it is dominated for $\Psi_1$. 

\paragraph{Claim --} \emph{For every $x\in N$ there exists a integer $n_x>0$ such that for every $v\in E(x)$ and $w\in F(x)$}
$$\frac{||\Psi_{n_x}(x)v||}{||\Psi_{n_x}(x)w||}<\frac{1}{2}.$$

Proving the previous claim clearly suffices to prove the domination: use the continuity of the Poincar{\'e} flow to prove that $n_x$ is locally constant and the compactness of $N$ to give a uniform upper bound for $n_x$. The domination then follows easily.

Suppose the claim does not hold for some $x\in N$. Then for every integer $n>0$ we have
\begin{equation}
\label{ineq1}
\frac{1}{n}\log|||\Psi_n(x)_{|E(x)}|||-\frac{1}{n}\log m(\Psi_n(x)_{|F(x)})\geq -\frac{\log 2}{n},
\end{equation}
where $|||.|||$ and $m(.)$ stand respectively for the operator \emph{norm and conorm} associated to the norm $||.||$.

Consequently there exists a strictly increasing sequence of integers $(n_k)_{k\geq 0}$ and a $\Phi_1$-invariant measure $\eta$ such that
\begin{equation}
\label{ineq2}
\frac{1}{n_k}\sum_{i=0}^{n_k}\delta_{\Phi_i(x)}\to\eta.
\end{equation}

Setting $h_1(x)=\log|||\Psi_1(x)_{|E(x)}|||$ and $h_2(x)=\log m(\Psi_1(x)_{|F(x)})$ which are continuous functions of $x\in N$ we see that

\begin{eqnarray}
\label{ineq3}
\frac{1}{n_k}\log|||\Psi_{n_k}(x)_{|E(x)}|||&\leq &\frac{1}{n_k}\sum_{i=0}^{n_k-1} h_1\circ\Phi_i(x),\\
\label{ineq4}
\frac{1}{n_k}\log m(\Psi_{n_k}(x)_{|F(x)})&\geq &\frac{1}{n_k}\sum_{i=0}^{n_k-1} h_2\circ\Phi_i(x).
\end{eqnarray}

Putting together \eqref{ineq1}, \eqref{ineq2}, \eqref{ineq3} and \eqref{ineq4} it follows that
$$\lambda^+_E(\eta)-\lambda^-_F(\eta)=\int_Nh_1(x)d\eta(x)-\int_N h_2(x)d\eta(x)\geq 0,$$
where $\lambda^+_E(\eta)$ and $\lambda^-_F(\eta)$ represent respectively the greatest and lowest Lyapunov exponents along $E$ and $F$ of the \emph{diffeomorphism} $\Phi_1$ for $\eta$.

This does not contradict yet our hypothesis. But if $\mu$ denotes the average of the measures $\Phi_{t\ast}\eta$, for $t\in[0,1]$ one easily proves using the commutation formula $\Phi_s\circ\Phi_t=\Phi_t\circ\Phi_s$ that $\mu$ is $\Phi_t$-invariant for every $t$ and has the same Lyapunov exponents as $\eta$. Hence $\mu$ is an invariant measure which does not satisfy the hypothesis of the lemma. This proves the claim by contradiction.
\end{proof}

\subsection{Domination of representations implies partial hyperbolicity}

Here we prove the first half of Theorem \ref{equivalencedomination} by showing that if $\hol$ is dominated by $\rho$ then the corresponding foliated geodesic flow $G_t:\hM\to\hM$ is partially hyperbolic. So let us assume that $\rho,\hol:\pi_1(\Sigma)\to\PSL_2(\R)$ are projective representations such that $\rho$ is Fuchsian and dominates $\hol$.

\paragraph{Domination along periodic orbits --} Let $v\in\hM$ be a periodic point for the foliated geodesic flow and $\mu_v$ be the $G_t$-invariant measure supported by the corresponding periodic orbit.

\begin{lemma}
\label{dominationexponents} If $\rho$ dominates $\hol$ and if $\kappa\in(0,1)$ denotes the domination constant, we have
$$|\lambda^{\tr}(\mu_v)|\leq\kappa.$$
\end{lemma}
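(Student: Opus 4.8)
The plan is to compute the transverse Lyapunov exponent of the periodic-orbit measure $\mu_v$ explicitly in terms of the cocycle $A_t$ over the geodesic flow $g_t$ of the base surface, and then compare it with the hyperbolic metric on $\Sigma$ using the domination hypothesis. First I would use the cocycle description from \S\ref{suspensioncocycles}: since $v$ projects to a periodic point $w=\Pi_{\ast}(v)\in T^1\Sigma$ of the base geodesic flow, of period $\tau$ say, the holonomy of $\F$ along the closed orbit $G_{[0,\tau]}(v)$ is exactly the matrix $A_{\tau}(w)\in\PSL_2(\R)$, which equals $\hol(\gamma)^{\pm 1}$ where $\gamma\in\pi_1(\Sigma)$ is the free homotopy class of the closed geodesic $c$ on $\Sigma$ directed by $w$ (this is the topological interpretation of holonomy recorded in \S\ref{suspensioncocycles}). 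Because the flow is periodic of period $\tau$, the transverse Lyapunov exponent of $\mu_v$ is simply $\lambda^{\tr}(\mu_v)=\frac{1}{\tau}\log\|D_v\h_{G_{[0,\tau]}(v)}\|$, i.e. $\frac{1}{\tau}$ times (up to sign) the logarithm of the derivative of $\hol(\gamma)$ at its relevant fixed point on $\R\PP^1$ — and here I would invoke Remark \ref{translength}: for a hyperbolic or parabolic element this logarithm of the derivative at a fixed point has modulus equal to the translation length. So $|\lambda^{\tr}(\mu_v)|=l_{\hol(\gamma)}/\tau$.

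Next I would identify $\tau$ with a hyperbolic translation length on the $\rho$-side. Since $\rho$ is Fuchsian associated to the hyperbolic metric $m$ that the admissible metric lifts to the leaves, the closed geodesic $c$ on $(\Sigma,m)$ in the class $\gamma$ has length exactly $l_{\rho(\gamma)}$, the translation length of the hyperbolic isometry $\rho(\gamma)$ of $\Hyp$. But this length is precisely the period $\tau$ of the orbit of $w$ under the base geodesic flow. Hence $|\lambda^{\tr}(\mu_v)| = l_{\hol(\gamma)}/l_{\rho(\gamma)}$.

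Finally I would apply the domination hypothesis: $\rho$ dominates $\hol$ with constant $\kappa\in(0,1)$ means $\ell_{\hol}<\kappa\,\ell_{\rho}$ coordinate-wise, so in particular $l_{\hol(\gamma)}<\kappa\, l_{\rho(\gamma)}$, which gives $|\lambda^{\tr}(\mu_v)|<\kappa$, hence $\le\kappa$ as claimed.

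The point requiring the most care — the ``main obstacle'' — is the bookkeeping in the first step: one must check that $\hol(\gamma)$ is a hyperbolic element (so that Remark \ref{translength} applies and gives a genuine nonzero contraction/expansion rate at the fixed point) rather than elliptic or parabolic, and in the elliptic/parabolic cases verify that the inequality still holds trivially because then $l_{\hol(\gamma)}=0$ and $\lambda^{\tr}(\mu_v)=0$ as well — so no trouble there, but it must be said. One should also be careful that the holonomy derivative one measures is at the fixed point that is actually approached along the orbit $G_{[0,\tau]}(v)$ (the attracting one, for the correct sign of $t$), which is why one gets $|\lambda^{\tr}|$ and not $\lambda^{\tr}$; this matches the fact that $\mu_v$ and its flip $\iota_{\ast}\mu_v$ have opposite transverse exponents. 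Modulo these verifications, which are routine given Remark \ref{translength} and the dictionary between closed geodesics on $(\Sigma,m)$ and conjugacy classes in the Fuchsian group $\rho(\pi_1(\Sigma))$, the argument is a short computation.
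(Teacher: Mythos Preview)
Your proposal is correct and follows essentially the same route as the paper: project the periodic orbit to the base, identify the holonomy with $\hol(\gamma)^{\pm1}$, use Remark \ref{translength} to equate $|\lambda^{\tr}(\mu_v)|$ with the ratio $l_{\hol(\gamma)}/l_{\rho(\gamma)}$, and invoke domination. The one point you slightly elide is that the period of $v$ upstairs need not a priori coincide with the period $\tau$ of $w$ downstairs (it could be a proper multiple if the fibre holonomy were a nontrivial rational rotation); the paper handles this by observing that a $\PSL_2(\R)$-element with a periodic point on $\R\PP^1$ is either a rational rotation (in which case $\lambda^{\tr}=0$ trivially) or parabolic/hyperbolic, in which case periodic points are fixed and the two periods agree. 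Your ``main obstacle'' paragraph essentially contains this observation, so the argument is complete.
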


\begin{proof}
Let $T_0>0$ be the period of $v$. The projection of the orbit $\OO(v)$ of $v$ is a periodic orbit $\OO(w)$ for $g_t$ where $w=\Pi_{\ast}(v)$.

The free homotopy class of $\OO(w)$ is the conjugacy class of some element $\gamma\in\pi_1(\Sigma)$ and the holonomy map $\tau_w$ over $\OO(w)$ is conjugated to $\hol(\gamma)^{-1}$. In particular it lies in $\PSL_2(\R)$ and, since $\OO(v)$ is closed, has a periodic point in $\R\PP^1$: it has to be conjugated to a rational rotation, a parabolic or hyperbolic element. In the first case the holonomy over $\OO(w)$ is conjugated to an isometry and we clearly have $\lambda^{\tr}(\mu_v)=0$. In the remaining cases, this implies that $v$, which is periodic for $\tau_w$, has to be a fixed point of $\tau_w$.

We deduce two things. Firstly the orbits $\OO(v)$ and $\OO(w)$ have the same length which is equal to $T_0$. Secondly the holonomy map $\h_{G_{[0,T_0]}(v)}$ of $\hcF$ along the closed orbit $G_{[0,T_0]}(v)$ is conjugated to $\hol(\gamma)^{-1}$.

By definition of $\rho$ the length of the closed geodesic $\OO(w)$ is $l_{\rho(\gamma)}=l_{\rho(\gamma)^{-1}}$. By Remark \ref{translength} the logarithm of the derivative at the fixed point $v$ of $\hol(\gamma)^{-1}$ is $\pm l_{\hol(\gamma)^{-1}}$ (note that these quantities are constant on the conjugacy class of $\gamma$).

Using the domination of $\hol$ by $\rho$ one sees that there exists $\kappa\in(0,1)$ independent of $\gamma\in\pi_1(\Sigma)$ such that we have $l_{\hol(\gamma)^{-1}}<\kappa l_{\rho(\gamma)^{-1}}$.

This implies that for every positive integer $k$ we have by Lemma \ref{indepexponent}
\begin{equation}
\label{Lyapperiodic}
\left|\frac{1}{kT_0}\log\,D_v\h_{G_{[0,kT_0]}(v)}\right|=\frac{l_{\hol(\gamma^k)^{-1}}}{l_{\rho(\gamma^k)^{-1}}}=\frac{l_{\hol(\gamma)^{-1}}}{l_{\rho(\gamma)^{-1}}}\leq\kappa.
\end{equation}

Since the left hand side of \eqref{Lyapperiodic} tends to $\lambda^{\tr}(\mu_v)$ as $k$ tends to infinity, the lemma follows.
\end{proof}

\paragraph{Partial hyperbolicity --} Note that the Poincar{\'e} linear flow $\Psi_t$ of  $G_t$ preserves a decomposition
$$\NN_X=E^s_{\NN}\oplus E_{\NN}^c\oplus E^u_{\NN},$$
where $E^c_{\NN}$ denotes the tangent space of the fibers, and $E^s_{\NN},E^u_{\NN}$ represent respectively the orthogonal projections on $\NN_X$ of the stable and unstable directions of the flow. These bundles are $1$-dimensional.

As we saw in \S \ref{transverselyapunovsection} the Lyapunov exponent at $v$ along $E^c_{\NN}$ is precisely $\lambda^{\tr}(v)$.

Moreover, since the metric on the leaves is of constant curvature $-1$, it is clear from the commutation relations between horocyclic and geodesic flows that for every $v\in\hM$ we have
$$\lambda^u(v)=-\lambda^s(v)=1.$$

In order to prove the partial hyperbolicity of the flow $G_t$ we are going to use the criterion stated in Lemma \ref{criteriondomination}. It is enough to prove the

\begin{lemma}
There exists $\kappa\in(0,1)$ such that for every ergodic $G_t$-invariant probability measure $\mu$ we have 
$$|\lambda^{\tr}(\mu)|\leq\kappa.$$
\end{lemma}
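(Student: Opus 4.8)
Let $\bar\mu$ be the image of $\mu$ under the bundle projection $\Pi_\ast:\hM\to T^1\Sigma$. Since $\Pi_\ast$ conjugates $G_t$ to the geodesic flow $g_t$ of $(\Sigma,m)$ and $\mu$ is ergodic, $\bar\mu$ is an ergodic $g_t$-invariant probability measure. The plan is to bound $|\lambda^{\tr}(\mu)|$ by a Lyapunov exponent of the $\PSL_2(\R)$-cocycle $A_t$ over $g_t$, and then to estimate that exponent through periodic data, using the estimate over periodic orbits from Lemma \ref{dominationexponents}.

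First I would reduce to a linear exponent. Since $\lambda^{\tr}(v)$ does not depend on the transverse metric and $\hol$ acts conformally on $\R\PP^1$, one may compute $\lambda^{\tr}(v)$ using the round metric on the fibres $F_{\ast,w}\cong\R\PP^1$ of $\Pi_\ast$; then $|D_v\h_{G_{[0,t]}(v)}|$ is the round-metric derivative at $v$ of the M\"obius transformation $A_t(w)$, $w=\Pi_\ast(v)$. Writing $A\in\PSL_2(\R)$ in the form $A=k_1\diag(e^\sigma,e^{-\sigma})k_2$ with $k_i\in\mathrm{SO}(2)$ and $e^\sigma=\|A\|$, one checks that this derivative lies everywhere in $[e^{-2\sigma},e^{2\sigma}]$, hence $|\log|D_v\h_{G_{[0,t]}(v)}||\le\log\|\Ad(A_t(w))\|$ for a fixed norm on $\ssl_2(\R)$. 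Letting $t\to\infty$ along a $\mu$-typical $v$ and using ergodicity gives
$$|\lambda^{\tr}(\mu)|\ \le\ \lambda_1^{\Ad}(\bar\mu),$$
where $\lambda_1^{\Ad}(\bar\mu)$ denotes the top Lyapunov exponent of the $\mathrm{GL}_3(\R)$-cocycle $\Ad\circ A_t$ over $g_t$. It then suffices to show $\lambda_1^{\Ad}(\bar\mu)\le\kappa$.

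For this I would invoke the approximation of Lyapunov exponents by periodic data for H\"older cocycles over transitive hyperbolic flows (Kalinin). The cocycle $w\mapsto\Ad(A_t(w))$ is smooth and $g_t$ is a transitive Anosov flow, so there is a sequence of periodic orbits $\OO(p_n)$ of $g_t$, of periods $T_n\to\infty$, with $\lambda_1^{\Ad}(\bar\mu)=\lim_n\frac{1}{T_n}\log\varrho(\Ad(A_{T_n}(p_n)))$, $\varrho$ denoting the spectral radius. Such an $\OO(p_n)$ is a closed geodesic representing a conjugacy class $\gamma_n\in\pi_1(\Sigma)$ with $l_{\rho(\gamma_n)}=T_n$, and $A_{T_n}(p_n)$ is conjugate to $\hol(\gamma_n)^{\pm1}$. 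When $\hol(\gamma_n)$ is hyperbolic, $\OO(p_n)$ lifts to a $G_t$-periodic orbit $\OO(v_n)$, and the computation leading to \eqref{Lyapperiodic} in the proof of Lemma \ref{dominationexponents} gives
$$\frac{1}{T_n}\log\varrho(\Ad(A_{T_n}(p_n)))=\frac{l_{\hol(\gamma_n)}}{l_{\rho(\gamma_n)}}=|\lambda^{\tr}(\mu_{v_n})|\le\kappa;$$
if $\hol(\gamma_n)$ is elliptic or parabolic, the spectral radius is $1$ and the term vanishes. Passing to the limit, $\lambda_1^{\Ad}(\bar\mu)\le\kappa$, hence $|\lambda^{\tr}(\mu)|\le\kappa$; together with Lemma \ref{criteriondomination} this concludes the first half of Theorem \ref{equivalencedomination}.

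The hardest point is exactly the passage from periodic orbits to arbitrary invariant measures: Lyapunov exponents are not weak-$\ast$ continuous in the measure, so Lemma \ref{dominationexponents} cannot be exploited by a naive density argument and one genuinely needs the periodic approximation theorem, the rest being bookkeeping about $\PSL_2(\R)$ and its adjoint action. If one prefers to avoid that theorem, an alternative is to invoke the characterization of domination in \cite{GKW,DeroinTholozan}: $\rho$ dominates $\hol$ if and only if there is a $(\rho,\hol)$-equivariant $\kappa'$-Lipschitz map $f:\Hyp\to\Hyp$ with $\kappa'<1$. Identifying the transverse metric on the fibres with the family of visual metrics seen from the points $f(c(0))$, where $c$ is the lift to $\Hyp$ of a $\bar\mu$-typical geodesic, one gets $|D_v\h_{G_{[0,t]}(v)}|=\exp(\beta_\theta(f(c(0)))-\beta_\theta(f(c(t))))$ with $\beta_\theta$ a Busemann function at $\theta\in\partial\Hyp=\R\PP^1$; since Busemann functions are $1$-Lipschitz while $f$ contracts the unit-speed geodesic $c$ by $\kappa'$, this quantity is at most $\kappa' t$ in absolute value, so $|\lambda^{\tr}(\mu)|\le\kappa'$ directly.
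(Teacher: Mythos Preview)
Your argument is correct but takes a more roundabout route than the paper. The paper's key observation is that the central bundle $E^c_{\NN}$ is one-dimensional and continuous, so $\lambda^{\tr}(\nu)=\int_{\hM}\log|||\Psi_1(v)_{|E^c_{\NN}(v)}|||\,d\nu(v)$ is the integral of a \emph{continuous} function and hence weak-$\ast$ continuous in $\nu$. Thus your assertion that ``Lemma~\ref{dominationexponents} cannot be exploited by a naive density argument'' is precisely what is \emph{false} in this codimension-one situation: the paper simply notes that if $\lambda^{\tr}(\mu)\neq 0$ then $\mu$ is Pesin-hyperbolic, applies Katok's closing lemma to obtain periodic measures $\mu_{v_k}\to\mu$ in the weak-$\ast$ topology, and passes to the limit in the bound of Lemma~\ref{dominationexponents} using this continuity. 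Your detour through the adjoint cocycle on $T^1\Sigma$ and Kalinin's periodic-approximation theorem is the correct machinery once the cocycle is genuinely higher-dimensional (where the top exponent is only upper semicontinuous), and your alternative via the $(\rho,\hol)$-equivariant $\kappa'$-Lipschitz map from \cite{GKW,DeroinTholozan} is also valid and gives a direct quantitative bound without any closing-type argument; both of your approaches would adapt more readily to higher-rank targets, whereas the paper's shortcut is specific to the one-dimensional centre but considerably lighter.
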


\begin{proof}
It is clear that it suffices to treat the case where $\mu$ is an ergodic $G_t$-invariant measure with $\lambda^{\tr}(\mu)\neq 0$. In that case the measure $\mu$ is an ergodic \emph{hyperbolic} measure in the sense of Pesin: all its Lyapunov exponents are non zero.

Since the flow $G_t$ is $C^{\infty}$ we can use \emph{Katok's closing lemma} (see \cite{Katok}). In that case there exists a sequence $(v_k)_{k\geq 0}$ of periodic points for $G_t$ such that
$$\mu_{v_k}\To_{k\to\infty}\mu,$$
in the weak$^{\ast}$-sense, where we recall that $\mu_{v_k}$ is the $G_t$-invariant measure suppported by the periodic orbit $\OO(v_k)$.

The transverse Lyapunov exponent of a $G_t$-invariant measure $\nu$ is in our case given by an integral
$$\lambda^{\tr}(\nu)=\int_{\hM}\log|||\Psi_1(v)_{|E^c_{\NN}(v)}|||d\nu(v),$$
in particular it varies continuously with $\nu$ and we have by Lemma \ref{dominationexponents}
$$|\lambda^{\tr}(\mu)|=\lim_{k\to\infty}|\lambda^{\tr}(\mu_k)|\leq\kappa.$$
\end{proof}

\subsection{Partial hyperbolicity implies domination of representations}

\begin{proof}[End of the proof of Theorem \ref{equivalencedomination}]We assume here that the foliated geodesic flow $G_t$ is partially hyperbolic. We want to find $\kappa<1$ so that for every $\gamma\in\pi_1(\Sigma)$, $l_{\hol(\gamma)}\leq\kappa l_{\rho(\gamma)}$. It is obvious from Remark \ref{translength} that it is enough to treat the case where $\hol(\gamma)$ is hyperbolic because otherwise $l_{\hol(\gamma)}=0$.

Let $\gamma\in\pi_1(\Sigma)$ be such that $\hol(\gamma)$ is hyperbolic. There exists a unique periodic orbit of the geodesic flow of $T^1\Sigma$, denoted by $\OO(w)$, whose free homotopy class is the conjugacy class of $\gamma$. As we have already noticed, the length of the orbit equals $l_{\rho(\gamma)}$.

The holonomy $\tau_w$ over $\OO(w)$ is conjugated to $\hol(\gamma)^{-1}$ and in particular it is a hyperbolic element of $\PSL_2(\R)$ with the same translation length as $\hol(\gamma)^{-1}$. It implies that $\tau_w$ has a repelling fixed point $v$, which is also fixed by all of its powers. By Remark \ref{translength} the  logarithm of the derivative of $\tau_{w}^k$ at $v$ equals $l_{\hol(\gamma^k)^{-1}}$. Now the partial hyperbolicity at $v$ implies the existence of uniform $C,\lambda>0$ such that if $T_0=l_{\rho(\gamma)}=l_{\rho(\gamma)^{-1}}$ denotes the period of $w$ we have for every $k>0$
$$\frac{D\tau^k_w(v)}{e^{kT_0}}\leq Ce^{-k\lambda T_0}.$$

Note that $\log D\tau^k_w(v)=l_{\hol(\gamma^k)^{-1}}=kl_{\hol(\gamma)^{-1}}$ and $kT_0=k\,l_{\rho(\gamma)^{-1}}$. Taking the logarithm, dividing by $k$ and denoting $\kappa=1-\lambda<1$ provides
$$l_{\hol(\gamma)^{-1}}\leq\frac{\log C}{k}+\kappa l_{\rho(\gamma)^{-1}},$$
for every $\gamma\in\pi_1(\Sigma)$ and $k>0$. We deduce that $\hol$ is dominated by $\gamma$. Theorem \ref{equivalencedomination} then follows. \end{proof}

\section{Foliations associated to Fuchsian representations}\label{fuchsianfoliations}

We now turn to the case where the holonomy representation has extremal Euler number. Recall that by \cite{Goldman} this happens if and only if the holonomy representation is Fuchsian. We prove that in that case the foliated geodesic flow is not partially hyperbolic. We go further by computing the transverse Lyapunov exponent of the unique SRB measure.

\begin{maintheorem}
\label{transversefuchsian}
Let $(\Pi,M,\Sigma,\F)$ be a foliated $\R\PP^1$-bundle with projective holonomy over a closed surface $\Sigma$ endowed with a hyerbolic metric $m$. Endow $M$ with  an admissible Riemannian metric. Let $\rho:\pi_1(\Sigma)\to\PSL_2(\R)$ denote a Fuchsian representation associated to $m$ and $\hol:\pi_1(\Sigma)\to\PSL_2(\R)$ denote the holonomy representation of $\F$. Assume that $\hol$ is Fuchsian and let $\chi\geq 1$ be the associated average reparametrization of the geodesic flow. Then the transverse Lyapunov exponent $\lambda^{\tr}$ of the unique SRB measure equals $-\chi$;

In particular when $\rho$ and $\hol$ are not conjugated we find $|\lambda^{\tr}|>1$.
\end{maintheorem}

We start by defining what we call \emph{average reparametrization of the geodesic flow}.

\subsection{Reparametrization of the geodesic flow of a hyperbolic surface}
\label{reparametrizationsection}

\paragraph{Oriented triples --} The geodesic flow of $T^1\Hyp$ shall be denoted by $\widetilde{G}_t$ There is an identification between $T^1\Hyp$ and the set of oriented triples of $\R\PP^1$, denoted by $\SSS$, that associates to every vector $v$ the triple $(pr_+(v),pr_0(v),pr_-(v))$ where
\begin{itemize}
\item $pr_+(v)\in \R\PP^1$ is the extremity of the geodesic ray determined by $-v$;
\item $pr_-(v)\in \R\PP^1$ is the extremity of the geodesic ray determined by $v$;
\item $pr_0(v)\in \R\PP^1$ is the extemity of the geodesic orthogonal to $v$ wich satisfies $pr_+(v)<pr_0(v)<pr_-(v)$ for the orientation.
\end{itemize}

This identification is an equivariance for the actions of $\pi_1(\Sigma)$ given on $T^1\Hyp$ by differentials of hyperbolic isometries, and on $\SSS$ by the diagonal action.

Moreover, the geodesic starting at $pr_+(v)$ and ending at $pr_-(v)$ is parametrized by the point $pr_0(v)$. More precisely, as $v$ evolves according to the geodesic flow, the point $pr_0(v)$ evolves along the differential equation given by the vector field $Y_v$ obtained by pulling back the vector field $x\partial_x$ of $\R\PP^1$ by the unique M\"obius transformation sending respectively $pr_+(v),pr_0(v),pr_-(v)$ on $0,1,\infty$.

\paragraph{Orbit equivalence of the geodesic flows --} There exists a unique homeomorphism $h:\R\PP^1\to\R\PP^1$ which conjugates the actions of $\rho$ and $\hol$ i.e. for every $\gamma\in\pi_1(\Sigma)$
$$h\circ\rho(\gamma)=\hol(\gamma)\circ h.$$
We call $h$ the \emph{boundary correspondence}: it is bih\"older and orientation preserving (see Section 5.9 of Thurston's notes \cite{Thurston} for all these facts). By evaluating $h$ on triples of points, we get an equivariant and bih\"older homeomorphism $H:\SSS\to\SSS$ which descends to the quotient and provides an orbit equivalence between the geodesic flows on the unit tangent bundles corresponding respectively to the metrics $m_1$ and $m_2$. We will conveniently identify $H$ with a homeomorphism of $T^1\Hyp$.

\paragraph{Average reparametrization of the geodesic flow --} Define the function $a:\R\times T^1\Hyp\mapsto\R$ defined for $(t,v)\in\R\times T^1\Hyp$:
$$H\circ \widetilde{G}_t(v)=\widetilde{G}_{a(t,v)}\circ H(v).$$

It can be proven that $a$ descends to a Liouville-integrable (for $m_1$) additive cocycle of $T^1\Sigma$. Birkhoff's additive ergodic theorem then ensures the existence of the following number, that will be referred to as the \emph{average reparametrization of the geodesic flow}, \emph{for Liouville almost every} $v\in T^1\Sigma$
\begin{equation}
\label{averagereparam}
\chi=\lim_{t\to\infty}\frac{a(t,v)}{t}>0.
\end{equation}
%

\begin{theorem}[Thurston, see \cite{Wolpert}]
Let $m_1,m_2$ be two hyperbolic metrics on a closed surface $\Sigma$. Then $\chi\geq 1$ with equality if and only if the two hyperbolic metrics represent the same Teichm\"uller class.
\end{theorem}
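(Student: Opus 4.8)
The plan is to reduce the statement to the thermodynamic formalism of the geodesic flow $g^{m_1}_t$ of $(\Sigma,m_1)$ on $T^1\Sigma$, which is a topologically mixing Anosov flow of topological entropy $1$ (by Manning's theorem, equal to the volume entropy of $\Hyp$) whose measure of maximal entropy is the Liouville measure. Write $g^{m_2}_t$ for the geodesic flow of $(\Sigma,m_2)$, and for $\gamma\in\pi_1(\Sigma)$ let $\ell_i(\gamma)$ denote the length of the associated closed geodesic for $m_i$, so $\ell_1(\gamma)=l_{\rho(\gamma)}$ and $\ell_2(\gamma)=l_{\hol(\gamma)}$. I will use that the cocycle $a$ admits a Hölder generator $\alpha\colon T^1\Sigma\to\R_{>0}$, bounded away from $0$ and $\infty$, with $a(t,v)=\int_0^t\alpha(g^{m_1}_sv)\,ds$ — this regularity being classical for the reparametrization of an orbit equivalence between hyperbolic-surface geodesic flows — so that, by Birkhoff's theorem ($\Liouv_{m_1}$ being $g^{m_1}$-ergodic), the number $\chi$ of \eqref{averagereparam} equals $\int_{T^1\Sigma}\alpha\,d\Liouv_{m_1}$.

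First I would establish a pressure identity for $\alpha$. The flow $\psi_s(v):=H^{-1}\bigl(g^{m_2}_s(H(v))\bigr)$ is exactly the time-change of $g^{m_1}$ for which traversing a $g^{m_1}$-arc of duration $\theta$ takes $\psi$-time $\int_0^\theta\alpha(g^{m_1}_uv)\,du$; by Abramov's formula in its topological-pressure form (Bowen--Ruelle), its topological entropy is the unique $h$ with $P_{g^{m_1}}(-h\alpha)=0$, where $P_{g^{m_1}}$ denotes topological pressure. On the other hand $\psi$ is topologically conjugate to $g^{m_2}$ through the homeomorphism $H^{-1}$, so $h_{top}(\psi)=h_{top}(g^{m_2})=1$. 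Hence $P_{g^{m_1}}(-\alpha)=0$.

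The inequality then follows from the variational principle: $0=P_{g^{m_1}}(-\alpha)=\sup_\mu\bigl(h_\mu(g^{m_1})-\int\alpha\,d\mu\bigr)\geq h_\nu(g^{m_1})-\int\alpha\,d\nu=1-\chi$ for $\nu=\Liouv_{m_1}$, the last equality because, the curvature being $-1$, Pesin's entropy formula gives $h_{\Liouv_{m_1}}(g^{m_1})=1$. So $\chi\geq1$. For the equality case, $\chi=1$ forces $\Liouv_{m_1}$ to realise the supremum above, i.e. to be an equilibrium state of the Hölder potential $-\alpha$; since $g^{m_1}$ is a topologically mixing Anosov flow this equilibrium state is unique (Bowen--Ruelle), and as $\Liouv_{m_1}$ is also the equilibrium state of the zero potential, $-\alpha$ and $0$ are cohomologous up to an additive constant, necessarily equal to $\int\alpha\,d\Liouv_{m_1}=\chi=1$. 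By Livšic's theorem this is equivalent to the average of $\alpha$ along every periodic orbit being $1$; by equivariance of $H$ the periodic orbit $c_\gamma$ of $g^{m_1}$ of period $\ell_1(\gamma)$ is sent to the periodic orbit of $g^{m_2}$ of period $\ell_2(\gamma)$, whence $\int_{c_\gamma}\alpha=a(\ell_1(\gamma),v)=\ell_2(\gamma)$, so the condition becomes $\ell_1(\gamma)=\ell_2(\gamma)$ for all $\gamma$. The classical fact that the marked length spectrum of a closed hyperbolic surface determines its point in Teichmüller space (e.g. through Fenchel--Nielsen coordinates) gives one implication; the converse is immediate, for if $m_1,m_2$ lie in the same Teichmüller class then $\rho$ and $\hol$ are conjugate in $\PSL_2(\R)$, the boundary correspondence $h$ is a Möbius map, $H$ is a genuine conjugacy, $a(t,v)=t$ and $\chi=1$.

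The main obstacle is the bookkeeping of the first step — getting straight which flow is a time-change of which and with which potential, and justifying the Hölder regularity of the generator $\alpha$; everything afterwards is either the variational principle or classical two-dimensional hyperbolic geometry. In the spirit of Thurston's and Wolpert's original arguments one could instead write $\chi$ as the ratio $i(\mathcal L_{m_1},\mathcal L_{m_2})/i(\mathcal L_{m_1},\mathcal L_{m_1})$ of Bonahon intersection numbers of Liouville geodesic currents and invoke $i(\mathcal L_{m_1},\mathcal L_{m_2})\geq i(\mathcal L_{m_1},\mathcal L_{m_1})$, with equality iff $m_1=m_2$ in Teichmüller space; this bypasses the thermodynamic formalism at the cost of importing the theory of geodesic currents.
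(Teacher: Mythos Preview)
The paper does not prove this theorem: it attributes the result to Thurston and refers to Wolpert \cite{Wolpert} for a proof, noting only that the number $\chi$ can be interpreted as a ratio of intersection numbers of Liouville geodesic currents. So there is no in-paper argument to compare with.

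Your thermodynamic-formalism proof is correct and is the standard ``entropy rigidity'' route to this inequality (in the spirit of Katok's rigidity for surfaces). The chain $P_{g^{m_1}}(-\alpha)=0\Rightarrow\chi\ge1$, together with uniqueness of equilibrium states and Liv\v{s}ic to force $\ell_1\equiv\ell_2$, and then marked-length-spectrum rigidity, is exactly right. The alternative you mention at the end --- Bonahon currents and the inequality $i(\mathcal L_{m_1},\mathcal L_{m_2})\ge i(\mathcal L_{m_1},\mathcal L_{m_1})$ --- is the one the paper is pointing to; it trades the thermodynamic machinery for the theory of geodesic currents, and in particular gets the strict inequality from the strict convexity of the length function on Teichm\"uller space.

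One technical point deserves more care than you give it. For the specific orbit equivalence $H$ constructed in the paper (acting as the boundary map $h$ on each coordinate of the triple $(\xi_+,\xi_0,\xi_-)$), the middle coordinate moves under $g^{m_1}_t$ and is then pushed through the merely H\"older map $h$; so $t\mapsto a(t,v)$ is a priori only H\"older, and $\alpha=\partial_t a|_{t=0}$ need not exist. What is true --- and is what you need --- is that $a$ is H\"older-cohomologous to a cocycle with a H\"older generator: one may either replace $H$ by an orbit equivalence smooth along flow lines, or work directly with the geodesic stretch function defined via Busemann cocycles, which is H\"older and has the correct periodic-orbit integrals $\int_{c_\gamma}\alpha=\ell_2(\gamma)$. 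Once this substitution is made, your argument proceeds verbatim, since $\chi$, the pressure identity, and the Liv\v{s}ic criterion depend only on the H\"older cohomology class of the potential.
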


\subsection{Non-partially hyperbolic foliated geodesic flows}

\paragraph{Canonical foliated geodesic flow --} Assume for the moment that $\rho=\hol$. By suspension of $\hol$ we obtain the so-called \emph{canonical foliation} $(M^{can},\F^{can})$ that we endowed with an admissible Riemannian metric. We can look at the foliated geodesic flow denoted by $G^{can}_t$ acting on $\hM^{can}$. It can be lifted as a flow of $T^1\Hyp\times\R\PP^1$ still denoted by $\widetilde{G}_t$.  It is possible to consider three sections $\widetilde{\sigma}^{\star,can}:T^1\Hyp\to T^1\Hyp\times\R\PP^1$, $\star=+,0,-$ defined by
$$\widetilde{\sigma}^{\star,can}=(Id,pr_{\star}).$$

One can prove that these sections descend to the quotient and provide three sections $\sigma^{\star,can}:T^1\Sigma\to\hM^{can}$, $\star=+,0,-$. The sections $\sigma^{+,can}$ and $\sigma^{-,can}$ commute with the geodesic flows and are respectively the \emph{sections of largest expansion and contraction} defined in \cite{BGV}. 

\paragraph{Trivialization --} As in \cite{BGV} (see also Section VIII.1.3 of the first author's thesis \cite{AlvarezThese} for the precise construction in this particular context), we can find an equivariant and fiber preserving analytic map $\widetilde{\Phi}:T^1\Hyp\times\R\PP^1\to T^1\Hyp\times\R\PP^1$ which:
\begin{itemize}
\item sends respectively the sections $\widetilde{\sigma}^{+,can},\widetilde{\sigma}^{0,can},\widetilde{\sigma}^{-,can}$ on the sections corresponding to the constant functions respectively equal to  $0,1$ and $\infty$.
\item sends the vector field $Y_v$ defined above on the vector field $x\partial_x$.
\end{itemize}
For this consider $\widetilde{\Phi}(v,x)=(v,P_v(x))$ for $(v,x)\in T^1\Hyp\times\R\PP^1$, where $P_v$ denotes the unique M{\"o}bius transform sending  the triple $(pr_{+}(v),pr_{0}(v),pr_{-}(v))$ on $(0,1,\infty)$

\paragraph{Horizontal and vertical components --} Define on $T^1\Hyp\times\R\PP^1$ the vector field $\widetilde{X}$ which generates the lift to $T^1\Hyp\times\R\PP^1$ of the canonical geodesic flow, which is still denoted by $\widetilde{G}_t$.

The \emph{vertical component} of $\widetilde{X}$ is by definition the vector field $\widetilde{Y}$ on $T^1\Hyp\times\R\PP^1$ which is tangent to the fibers $\{v\}\times\R\PP^1$ and induces on any such fiber the vector field $Y_v=P_v^{\ast}(x\partial_x)$ where $P_v$ has been defined above. The following lemma is essentially due to Bonatti, G{\'o}mez-Mont and Vila (see \cite[\S 8.2]{BGV}), we give below a glimpse of its proof in our context.

\begin{lemma}
\label{decomposition}
\begin{enumerate}
\item The vector field $\widetilde{Y}$, as well as the sum $\widetilde{Z}=\widetilde{X}+\widetilde{Y}$, are invariant by the diagonal action of $\pi_1(\Sigma)$ on $T^1\Hyp\times\R\PP^1$ and commute with $\widetilde{X}$.
\item The vector field $\widetilde{\Phi}_{\ast}\widetilde{Y}$ is tangent to the fibers $\{v\}\times\R\PP^1$ and induces the vector field $x\partial_x$ in these fibers;
\item The flow of $\widetilde{\Phi}_{\ast}\widetilde{Z}$ preserves the fibers and sends fiber to fiber as the identity.
\end{enumerate}
\end{lemma}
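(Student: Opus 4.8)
I would work in the trivialisation of the flat $\R\PP^1$–bundle $T^1\Hyp\times\R\PP^1$ over $T^1\Hyp$ in which the lifted canonical foliated geodesic flow is leaf–preserving, i.e. reads $\widetilde G_t(v,x)=(\widetilde G_tv,x)$, so that $\widetilde X$ is the horizontal lift of the geodesic vector field $X$ of $T^1\Hyp$ and $\widetilde Z=\widetilde X+\widetilde Y$, being $\widetilde X$ plus a vertical field, again projects to $X$. The single fact from which everything follows is that $Y_v=P_v^{\ast}(x\partial_x)$ depends only on the ordered pair of endpoints $\bigl(pr_+(v),pr_-(v)\bigr)$ of the geodesic carrying $v$, and not on $pr_0(v)$: the flow of $x\partial_x$ is the one–parameter group of homotheties $x\mapsto e^sx$, and moving $pr_0(v)$ inside its component of $\R\PP^1\moins\{pr_+(v),pr_-(v)\}$ merely post–composes $P_v$ with a homothety, which commutes with every homothety. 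Since $pr_{\pm}$ are $\widetilde G_t$–invariant this gives $Y_{\widetilde G_tv}=Y_v$ for all $t$; moreover the integral curve of $Y_v$ through $pr_0(v)$ is $s\mapsto pr_0(\widetilde G_sv)$, and a M\"obius transformation is pinned down by two fixed points and the image of a third, whence
$$\exp(sY_v)=P_{\widetilde G_sv}^{-1}\circ P_v\in\PSL_2(\R).$$
I would establish this observation first; the rest is bookkeeping.

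\paragraph{Item (1).} I would record the equivariance $pr_{\star}(\gamma v)=\rho(\gamma)\,pr_{\star}(v)$, which is where the standing hypothesis $\rho=\hol$ enters, and deduce $P_{\gamma v}=P_v\circ\rho(\gamma)^{-1}$, hence $Y_{\gamma v}=(\rho(\gamma))_{\ast}Y_v$. As $\pi_1(\Sigma)$ acts through $\rho$ on the $\R\PP^1$–factor, this is precisely the invariance of $\widetilde Y$; and $\widetilde X$ is invariant since it descends to $\hM^{can}$, so $\widetilde Z$ is invariant as well. For the commutation, in a local chart $\widetilde Y=f(v,x)\,\partial_x$ with $f(v,\cdot)$ the coefficient of $Y_v$, so $[\widetilde X,\widetilde Y]=(\widetilde Xf)\,\partial_x$, and $\widetilde Xf=0$ because $f(\widetilde G_tv,x)=Y_{\widetilde G_tv}(x)=Y_v(x)$ is independent of $t$; therefore $[\widetilde X,\widetilde Y]=0$, whence $[\widetilde X,\widetilde Z]=0$ too.

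\paragraph{Items (2) and (3).} Item (2) is immediate: $\widetilde\Phi$ preserves each fibre $\{v\}\times\R\PP^1$ and acts on it by the M\"obius map $P_v$, so $\widetilde\Phi_{\ast}\widetilde Y$ is tangent to the fibres and restricts on the fibre over $v$ to $(P_v)_{\ast}P_v^{\ast}(x\partial_x)=x\partial_x$. For item (3), using $[\widetilde X,\widetilde Y]=0$ (so the flows of $\widetilde X$ and $\widetilde Y$ commute) and $Y_{\widetilde G_sv}=Y_v$, the flow of $\widetilde Z$ factors as the geodesic flow upstairs followed by $\exp(sY_v)$ in the fibre, namely $\Theta_s(v,x)=\bigl(\widetilde G_sv,\exp(sY_v)(x)\bigr)$. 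Conjugating by $\widetilde\Phi$ and inserting the displayed identity then telescopes,
$$\widetilde\Phi\circ\Theta_s\circ\widetilde\Phi^{-1}(v,x)=\bigl(\widetilde G_sv,\ P_{\widetilde G_sv}\circ\exp(sY_v)\circ P_v^{-1}(x)\bigr)=\bigl(\widetilde G_sv,\ P_{\widetilde G_sv}\circ P_{\widetilde G_sv}^{-1}\circ P_v\circ P_v^{-1}(x)\bigr)=(\widetilde G_sv,x),$$
so that the flow of $\widetilde\Phi_{\ast}\widetilde Z$ carries the fibre over $v$ onto the fibre over $\widetilde G_sv$ by the identity map.

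\paragraph{Main obstacle.} There is no serious difficulty here, which is why only a glimpse is needed; the one point deserving care is the preliminary observation that $Y_v$ is constant along the geodesic flow — equivalently, that once $pr_{\pm}(v)$ are fixed the normalisation of $P_v$ through $pr_0(v)$ is irrelevant — since it underlies both $[\widetilde X,\widetilde Y]=0$ and the telescoping in item (3). Everything else is unwinding the definitions of $\widetilde\Phi$, $\widetilde Y$ and $\widetilde Z$ together with the equivariance of the maps $pr_{\star}$.
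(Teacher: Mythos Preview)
Your proof is correct and follows essentially the same line as the paper for items (1) and (2): both rest on the observation that $Y_v$ depends only on the pair $(pr_+(v),pr_-(v))$, hence $Y_{\widetilde G_tv}=Y_v$, which gives $[\widetilde X,\widetilde Y]=0$; you are simply more explicit than the paper about the $\pi_1(\Sigma)$--equivariance via $P_{\gamma v}=P_v\circ\rho(\gamma)^{-1}$.

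For item (3) the routes diverge. The paper argues indirectly: it checks that the flow $\widetilde Z_t$ commutes with each of the three sections $\widetilde\sigma^{\star,can}$, so that after conjugating by $\widetilde\Phi$ the fiber map is a M{\"o}bius transformation fixing $0,1,\infty$ and hence the identity. You instead produce the closed formula $\exp(sY_v)=P_{\widetilde G_sv}^{-1}\circ P_v$ and conjugate directly, obtaining the identity by telescoping. Your approach is more computational and self-contained; the paper's is a bit more conceptual, leaning on the three-point rigidity of $\PSL_2(\R)$ rather than on an explicit identification of the vertical flow. Both rely on the same underlying fact that the integral curve of $Y_v$ through $pr_0(v)$ is $s\mapsto pr_0(\widetilde G_sv)$, which is exactly the parametrization statement recorded in the paper just before the definition of $\widetilde Y$.
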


\begin{proof}
In order to prove the first item it is enough to see that $\widetilde{Y}$ commutes with the foliated geodesic flow, which is a direct consequence of the definition of the three sections: we only have to check that $Y_v=Y_{\widetilde{G}_t(v)}$ for every $v\in T^1\Hyp$ (this is left to the reader).

The second item is a trivial consequence of the definition of $\widetilde{Y}$.

In order to prove the third item it is enough to prove that the flow $\widetilde{Z}_t$ of $\widetilde{Z}$ is fiber preserving and commutes with the three sections $\widetilde{\sigma}^{\star,can}$, $\star=+,0,-$. Indeed under these conditions the flow of $\widetilde{\Phi}_{\ast}\widetilde{Z}$ preserves the fibers (since $\widetilde{\Phi}$ is fiber preserving) and sends fiber to fiber as a M{\"o}bius transformation fixing $0,1$ and $\infty$. As a consequence it has to be transversally the identity.

Since $\widetilde{X}$ and $\widetilde{Y}$ commute, and since their flows both send fibers on fibers we get that $\widetilde{Z}_t$ does as well.

Since $\widetilde{\sigma}^{\pm,can}$ are zeros of $Y_v$ for every $v$ and since $\widetilde{G}_t$ commutes with these sections, one easily gets that $\widetilde{Z}_t$ commutes as well with these sections.

By definition $\widetilde{Y}_t(\widetilde{\sigma}^{0,can}(v))=(v,pr_0(\widetilde{G}_t(v)))$ and $\widetilde{G}_t(\widetilde{\sigma}^{0,can}(v))=(\widetilde{G}_t(v),pr_0(v))$. Since $\widetilde{Y}$ and $\widetilde{X}$ commute we have
$$\widetilde{Z}_t(\widetilde{\sigma}^{0,can}(v))=\widetilde{Y}_t\circ \widetilde{G}_t(\widetilde{\sigma}^{0,can}(v))=(\widetilde{G}_t(v),pr_0(\widetilde{G}_t(v)))=\widetilde{\sigma}^{0,can}(\widetilde{G}_t(v)),$$
and the lemma follows.
\end{proof}

All the objects above are equivariant for the diagonal action of $\pi_1(\Sigma)$: they all descend to the quotient (the notation of these objects is doing just by omission of the ``tilde''-character). The foliated geodesic flow of $\hM^{can}$ then satisfies for every time $t\in\R$ $G^{can}_t=Y_{-t}\circ Z_t$.

\paragraph{The SRB measure --} The unique SRB measure for $G^{can}_t$ is precisely $\mu^{+,can}=\sigma^{+,can}\,_{\ast}\Liouv$ (see \cite[Theorems 1.1 and 1.6]{BGV}). With the description made above, we can prove the following
\begin{proposition}
\label{canonical}
The transverse Lyapunov exponent of the canonical foliated geodesic flow for its unique SRB measure is equal to $-1$.
\end{proposition}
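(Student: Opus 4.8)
The plan is to transport the whole situation through the analytic trivialization $\widetilde{\Phi}$ introduced just before Lemma~\ref{decomposition}; in those coordinates the transverse cocycle becomes completely explicit. All the objects involved are $\pi_1(\Sigma)$-equivariant, so I work on the universal cover and descend at the end.

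First I would exploit the decomposition $\widetilde{G}_t=\widetilde{Y}_{-t}\circ\widetilde{Z}_t$ (the lift of $G^{can}_t=Y_{-t}\circ Z_t$) together with Lemma~\ref{decomposition}. By item~$2$, $\widetilde{\Phi}_{\ast}\widetilde{Y}$ is the vector field $x\partial_x$ on every fiber $\{v\}\times\R\PP^1$, so $\widetilde{\Phi}_{\ast}\widetilde{Y}_{-t}$ acts on each fiber, independently of $v$, as the homothety $y\mapsto e^{-t}y$; by item~$3$, $\widetilde{\Phi}_{\ast}\widetilde{Z}_t$ preserves the fibers, restricts to the identity on each of them, and — since $\widetilde{Y}$ is tangent to the fibers, so that $\widetilde{Z}=\widetilde{X}+\widetilde{Y}$ has the same base component as $\widetilde{X}$ — covers the lift $\widetilde{g}_t$ of the geodesic flow of the hyperbolic surface $\Sigma$ on the base $T^1\Hyp$. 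Composing, $\widetilde{\Phi}_{\ast}\widetilde{G}_t(v,y)=(\widetilde{g}_t(v),e^{-t}y)$, so after descending the foliated geodesic flow reads in the trivialization as $(w,y)\in T^1\Sigma\times\R\PP^1\mapsto(g_t(w),e^{-t}y)$.

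Next I would identify the transverse exponent with the fiber part of this flow and evaluate it on the SRB measure. As recalled in \S\ref{suspensioncocycles}, $G^{can}_t$ preserves the fibers $F_{\ast,w}$ of $\Pi_{\ast}:\hM^{can}\to T^1\Sigma$, and $(G^{can}_t)_{|F_{\ast,w}}$ is precisely the holonomy of $\hcF^{can}$ along the orbit segment $g_{[0,t]}(w)$; thus for $v\in F_{\ast,w}$ the map $\h_{G^{can}_{[0,t]}(v)}$ is this restriction, and by the definition of the transverse exponent (\S\ref{transverselyapunovsection}) $\lambda^{\tr}(v)$ is the exponential growth rate of $||D_v\h_{G^{can}_{[0,t]}(v)}||$; by the previous step this holonomy is the single M{\"o}bius map $y\mapsto e^{-t}y$, independently of $w$. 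Now $\mu^{+,can}=\sigma^{+,can}_{\ast}\Liouv$ and $\widetilde{\Phi}$ carries $\widetilde{\sigma}^{+,can}$ onto the constant section $y=0$, so $\mu^{+,can}$ is supported on the points whose trivialized fiber coordinate is $0$, the common attracting fixed point of all the maps $y\mapsto e^{-t}y$. Since $\lambda^{\tr}$ does not depend on the transverse metric (\S\ref{transverselyapunovsection}), I may compute it for the transverse metric pushed through $\widetilde{\Phi}$ from a fixed Riemannian metric of $\R\PP^1$; then the norm of the differential of $y\mapsto e^{-t}y$ at the fixed point $y=0$ is exactly $e^{-t}$, whence $\lambda^{\tr}(v)=\lim_{t\to\infty}\tfrac1t\log e^{-t}=-1$ for every $v\in\Supp(\mu^{+,can})$, and integrating gives $\lambda^{\tr}(\mu^{+,can})=-1$.

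The argument is mostly bookkeeping, but two points need care. First, the coordinate $y$ is only locally defined on $\R\PP^1$, so the value $e^{-t}$ of the derivative has to be read precisely at the attracting fixed point $y=0$ on which $\mu^{+,can}$ sits — at the repelling section $\sigma^{-,can}$ the same computation returns $+1$. Second, one must use that ``$\widetilde{\Phi}_{\ast}\widetilde{Z}$ is transversally the identity'' (item~$3$ of Lemma~\ref{decomposition}) to guarantee that the entire fiberwise part of the $G^{can}_t$-cocycle comes from the factor $\widetilde{Y}_{-t}$. As a consistency check, the negative sign obtained is exactly the one imposed by Theorem~\ref{SRBmeasuresfgf}.
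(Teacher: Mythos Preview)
Your argument is correct and follows essentially the same route as the paper: you use the decomposition $G^{can}_t=Y_{-t}\circ Z_t$, push it through the trivialization $\Phi$ so that the transverse cocycle becomes the single homothety $y\mapsto e^{-t}y$, observe that $\mu^{+,can}$ sits on the section $y=0$, and read off the derivative there. The only difference is that you spell out in more detail why $\widetilde{\Phi}_{\ast}\widetilde{Z}_t$ contributes nothing to the fiber cocycle and why the independence of the transverse metric lets you compute with the pullback metric; the paper compresses all of this into two lines.
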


\begin{proof}
First the value of the transverse Lyapunov of the SRB measure is independent of the choice of a transverse metric. We are going to use the pullback by $\Phi$ of the usual metric of the fibers (identified with $\R\PP^1$) in order to compute it.

It follows from the discussion above that we can write $G^{can}_t=Y_{-t}\circ Z_t$ where, after the smooth fiber-preserving change of coordinates $\Phi$, $Y_t$ coincide in each fiber with the flow $(x,t)\mapsto e^tx$ and $Z_t$ induces the identity map between fibers. The metric of the fibers is by definition sent onto the usual metric of $\R\PP^1$.

A typical point for the SRB measure is given by $\sigma^{+,can}(v)$. To compute the transverse Lyapunov at this point, it is enough to compute the Lyapunov exponent at $0$ of $(x,t)\mapsto e^{-t}x$, which is $-1$.
\end{proof}

\paragraph{General Fuchsian foliation --} We now turn to the case of two a priori different Fuchsian representations $\rho,\hol$. Consider $H:T^1\Hyp\to T^1\Hyp$, the reparametrization of the geodesic flow. It gives an equivariant bih\"older orbit equivalence $(H,Id):T^1\Hyp\times\R\PP^1\to T^1\Hyp\times\R\PP^1$ between foliated geodesic flow corresponding to $\hol$ and the canonical one.

This in turn provides a bih{\"o}lder orbit equivalence $\widehat{H}:\hM\to\hM$ such that for every $v\in\hM$ and $t\in\R$:
$$G_t(v)=\widehat{H}^{-1}\circ G^{can}_{a(t,v)}\circ\widehat{H}(v)=\widehat{H}^{-1}\circ Y_{-a(t,v)}\circ Z_{a(t,v)}\circ\widehat{H}(v).$$

Using now that $\widehat{H}$, although being only H{\"o}lder continuous in the horizontal direction, is smooth in the fiber direction, and the easy fact that the SRB measure of $G_t$ is precisely given by $\mu^+=\widehat{H}_{\ast}\mu^{+,can}$, we conclude the proof of Theorem \ref{transversefuchsian}:

\begin{center}
\emph{The transverse Lyapunov exponent of $G_t$ for its unique SRB measure is equal to $-\chi$.}
\end{center}

\section{Appendix. Negatively curved metrics in the leaves of a foliation by surfaces}
\label{neg_curved}

In this paragraph $M$ is a closed manifold endowed with a smooth foliation by surfaces $\F$ and with a smooth Riemannian metric $g$. Our goal is to prove Ghys' theorem \ref{negativeurvature}: we want to find in the conformal class of $g$ a metric whose restriction to each leaf has negative Gaussian curvature.

\subsection{Harmonic measures and Gauss-Bonnet theorem}

\paragraph{Harmonic measures --} There is well defined \emph{foliated Laplace-Beltrami operator} defined on the space $C^{0,2}(M)$ of continuous functions $\phi:M\to\R$ which are of class $C^2$ inside the leaves that we denote by $\DeltaF$. By definition for every $x\in M$ and $\phi\in C^{0,2}(M)$ we have $\DeltaF\phi(x)=\Delta_{L_x}\phi(x)$ where $L_x$ is the leaf of $x$ and $\Delta_{L_x}$ denotes the Laplace operator for the restricted metric $g_{L_x}$.

\begin{defi}[Harmonic measures]
A harmonic measure for $\F$ is a probability measure $m$ on $M$ such that for every $\phi\in C^{0,2}(M)$
\begin{equation}
\label{zeroint}
\int_M\DeltaF\phi\,\,dm=0.
\end{equation}
\end{defi}

Garnett proved in \cite{Garnett} the existence of harmonic measures.

\begin{rem}
\label{remouille}
Note that since $M$ is compact, by using a partition of unity and convolution, we can find for every $\phi\in C^{0,2}(M)$ a sequence $(\phi_n)_{n\in\N}$ of smooth functions on $M$ converging uniformly to $\phi$ and whose derivatives of first and second orders inside the leaves converge uniformly to those of $\phi$. This means in particular that $\DeltaF\phi_n\to\DeltaF\phi$ uniformly. Hence to prove that $m$ is harmonic it is enough that it vanishes on the Laplacians of smooth functions, i.e. that \eqref{zeroint} holds for every $\phi\in C^{\infty}(M)$.
\end{rem}

The reader will notice that this remark, together with Candel's simultaneous uniformization theorem \cite{Candel_unif} proves Theorem \ref{negativeurvature}. We will carry on the presentation of Ghys' argument which we believe has the merit of being simple and independent of that theorem.

\paragraph{Foliations by hyperbolic surfaces --} Using \emph{isothermal coordinates} the metric $g$ gives $\F$ a structure of Riemann surface foliation (see \cite[Theorem 3.2.]{Candel_unif} for more details).

Say $\F$ is a \emph{foliation by hyperbolic surfaces} if the universal cover of every leaf is conformally equivalent to the unit disc $\D$. As noted in \cite{Candel_unif} the property of being a foliation by hyperbolic surfaces is topological and is independent of the choice of a metric $g$. It comes from \cite[Lemma 2.1]{Ghys_unif} that

\begin{proposition}
\label{hyptype}
Let $(M,\F)$ is a closed manifold foliated by surfaces and $g$ be a Riemannian metric on $M$. If $M$ does not possess a transverse invariant measure, then all of its leaves are hyperbolic.
\end{proposition}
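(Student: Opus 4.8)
The plan is to argue by contraposition: I assume that some leaf $L_0$ of $\F$ is not hyperbolic and construct a transverse holonomy-invariant measure. Recall that $g$ endows $\F$ with a structure of Riemann surface foliation, so leafwise uniformization applies: the universal cover of $L_0$ is conformally $\C\PP^1$, $\C$ or $\D$, and ``not hyperbolic'' means it is $\C\PP^1$ or $\C$; hence $L_0$ is conformally one of $\C\PP^1=S^2$, $\C$, $\C^{*}$ or $\T^2$. The compact subcases are immediate and I would dispose of them first. If $L_0$ is compact (conformal type $S^2$ or $\T^2$), fix a complete system of transversals $(T_i)_{i\in I}$; each $L_0\cap T_i$ is finite, and I claim $\nu_i=\sum_{q\in L_0\cap T_i}\delta_q$ is a transverse invariant measure. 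Indeed, a holonomy map $h\colon S_i\to S_j$ is obtained by sliding along leaves, so $h$ and $h^{-1}$ both preserve $L_0$; thus $h$ restricts to a bijection $S_i\cap L_0\to S_j\cap L_0$, and for every Borel $A\subset S_i$ one gets $\nu_j(h(A))=|h(A)\cap L_0|=|A\cap L_0|=\nu_i(A)$; moreover $\nu_i(T_i)>0$ for some $i$ because $\bigcup_i T_i$ meets $L_0$. This settles the elliptic case and the toral case.

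The substantial case is that of a non-compact parabolic leaf $L_0$, conformally $\C$ or $\C^{*}$. Here I would invoke Garnett's theorem (\cite{Garnett}) to produce a harmonic probability measure $m$ for $\F$ --- applied if convenient to the compact $\F$-saturated sublamination $\Cl(L_0)$. Garnett's structure theorem disintegrates $m$ in each foliated chart as an integral over a transversal of leafwise measures whose densities are restrictions of \emph{positive harmonic functions on the full ambient leaf}. Now $L_0$ is conformally parabolic, and since Brownian motion on a surface is a conformal invariant (up to time change), $L_0$ is metrically parabolic as well; on a parabolic Riemann surface every non-negative superharmonic function, in particular every positive harmonic function, is constant. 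Consequently, along $L_0$ the conditional measures of $m$ are proportional to the leafwise Riemannian volume.

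The remaining step --- turning this leafwise flatness into an honest transverse invariant measure --- is the crux, and it is precisely the content of \cite[Lemma 2.1]{Ghys_unif}: one shows that the foliated Brownian motion issued from $L_0$ yields a harmonic measure for which \emph{almost every} leaf is parabolic (exploiting recurrence of Brownian motion on parabolic leaves and the behaviour of the conformal type of a leaf under weak limits), and that any harmonic measure whose conditionals are a.e.\ proportional to leafwise volume is locally the product of that leafwise volume by a holonomy-invariant transverse measure --- the converse of the construction of totally invariant measures recalled in \S\ref{subsec_invariant}. This contradicts the absence of a transverse invariant measure and completes the proof. I expect this last step to be the main obstacle: the conformal type of a leaf is only a measurable, not a continuous, leafwise invariant, so the delicate point is to control how it interacts with harmonic measures and thereby promote the Liouville property, known on the single leaf $L_0$, to a set of leaves of positive harmonic measure.
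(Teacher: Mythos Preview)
Your proposal is correct and follows the same route as the paper: both defer the substantive content to \cite[Lemma~2.1]{Ghys_unif}. The paper in fact gives no proof at all beyond that citation; your sketch adds a clean treatment of the compact-leaf case and a faithful outline of Ghys's argument for the parabolic case, correctly flagging the delicate point (promoting the Liouville property from the single leaf $L_0$ to a set of leaves of positive harmonic measure).
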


\paragraph{Gauss-Bonnet theorem --} Hereafter we let $\kappa(x)$ denote the Gaussian curvature at $x$ of the leaf $L_x$. This is a continuous function of $x\in M$.

Even if a priori we don't have $\kappa<0$ everywhere, Ghys proved in \cite{Ghys_GB} the following foliated analogue of Gauss-Bonnet theorem.

\begin{theorem}[Ghys]
\label{gaussbonnet}
Let $(M,\F)$ be a foliated manifold foliated by hyperbolic surfaces and $g$ be a Riemannian metric on $M$. Then for every harmonic measure $m$ we have
$$\int_M \kappa\,dm<0.$$
\end{theorem}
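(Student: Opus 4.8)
The plan is to reduce the statement to the two–dimensional conformal change of curvature formula together with the defining property of harmonic measures; the only substantive input is the uniformization theorem for laminations by hyperbolic surfaces.

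First I would invoke Candel's uniformization theorem \cite{Candel_unif}: since $(M,\F)$ is a foliation by hyperbolic surfaces, each leaf $L$ carries a \emph{unique} complete metric $\hat g_L$ of constant Gaussian curvature $-1$ in the conformal class of $g_{|L}$, and — this is the real point — the family $L\mapsto\hat g_L$ depends continuously on the leaf. Writing $\hat g_L=e^{2u}g_{|L}$ with $u=u_L:L\to\R$, this transverse continuity says exactly that $u$ defines a function on $M$ which is continuous on $M$ and $C^{\infty}$ along the leaves; in particular $u\in C^{0,2}(M)$ and, by the identity below, $\DeltaF u$ is continuous on $M$ as well.

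Next I would use the classical transformation law for the Gaussian curvature under a leafwise conformal change of metric in dimension two: if $\hat g_L=e^{2u}g_{|L}$ then the curvature $\hat\kappa$ of $\hat g_L$ satisfies $\hat\kappa=e^{-2u}\bigl(\kappa-\DeltaF u\bigr)$, where $\DeltaF$ is the leafwise Laplace--Beltrami operator (with the geometer's sign, so that it is $\partial_{xx}+\partial_{yy}$ in leafwise isothermal coordinates). Since $\hat\kappa\equiv-1$, this rearranges to the pointwise identity on $M$
$$\kappa=\DeltaF u-e^{2u}.$$
Integrating against the harmonic measure $m$ and using that $\int_M\DeltaF u\,dm=0$ (this is the definition of a harmonic measure, applied to $u\in C^{0,2}(M)$), one gets
$$\int_M\kappa\,dm=\int_M\DeltaF u\,dm-\int_M e^{2u}\,dm=-\int_M e^{2u}\,dm<0,$$
the last inequality because $e^{2u}>0$ everywhere and $m$ is a probability measure. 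This proves the theorem.

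The only genuine obstacle is the first step: that the leafwise Poincaré metric varies continuously in the transverse direction, equivalently that the conformal factor $u$ is a \emph{globally continuous} function on $M$ rather than merely leafwise smooth. This is precisely the content of Candel's uniformization theorem, and it is here that the hypothesis that every leaf is hyperbolic is used (so that $u$ is defined and finite on all of $M$). Everything after that is the two–dimensional conformal curvature formula and the definition of a harmonic measure, so this is essentially Ghys' argument from \cite{Ghys_GB}.
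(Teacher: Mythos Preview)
The paper does not actually prove Theorem~\ref{gaussbonnet}: it merely states the result and cites Ghys' original paper \cite{Ghys_GB}. So there is no proof in the paper to compare your proposal against.

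That said, your argument is correct. Candel's uniformization theorem gives a leafwise Poincar\'e metric $\hat g=e^{2u}g$ whose conformal factor $u$ is smooth along leaves and varies continuously in the transverse direction for the $C^\infty$ topology; in particular $u\in C^{0,2}(M)$ and $\DeltaF u$ is continuous, so the defining property of harmonic measures applies. The conformal curvature formula (which is exactly the paper's equation \eqref{changecurvature}) then yields $\kappa=\DeltaF u-e^{2u}$ pointwise, and integrating against $m$ gives $\int_M\kappa\,dm=-\int_M e^{2u}\,dm<0$.

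One minor historical remark: you attribute this argument to Ghys' 1988 paper \cite{Ghys_GB}, but Candel's uniformization theorem \cite{Candel_unif} dates from 1993, so Ghys' original proof necessarily proceeded differently (via an analysis of the Euler class of the tangent bundle along leaves rather than via uniformization). Your route through Candel is nonetheless a clean and standard way to obtain the statement today, and the hypothesis that all leaves are hyperbolic is used exactly where you say it is: to guarantee that the leafwise Poincar\'e metric exists on every leaf and that $u$ is a well-defined continuous function on all of $M$.
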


\subsection{Proof of Theorem \ref{negativeurvature}} 

\paragraph{Conformal change of metric --} Let $(M,\F)$ be a closed manifold foliated by surfaces and $g$ be a Riemannian metric. Let $\phi:M\to\R$ be a smooth function and $g'=e^{2\phi}g$. The Gaussian curvature of $g'$ at $x$ of $L_x$, denoted by $\kappa'(x)$, is related to $\kappa(x)$ by the following formula (see \cite{Ghys_GB})
\begin{equation}
\label{changecurvature}
\kappa'(x)=e^{-2\phi(x)}\left(\kappa(x)-\DeltaF\phi(x)\right).
\end{equation}

Recall that we want to prove Theorem \ref{negativeurvature} which states the existence of a metric $g'$ conformally equivalent to $g$ such that $\kappa'<0$ everywhere. Theorem \ref{negativeurvature} is then a consequence of the following key lemma (see \cite{Ghys_GB,Ghys_unif}  and \cite[Lemma 3.5]{DeroinKleptsyn}).

\begin{lemma}[Ghys]
\label{keylemmaGhys}
Assume that all leaves of $\F$ are hyperbolic. Then there exists a smooth function $\phi:M\to\R$ such that for every $x\in M$
$$\kappa(x)-\DeltaF\phi(x)<0.$$
\end{lemma}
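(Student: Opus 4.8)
The plan is to prove the lemma by a Hahn--Banach separation argument in the Banach space $C^0(M)$, using as the only non-elementary input Ghys' foliated Gauss--Bonnet theorem \ref{gaussbonnet}; once the lemma is in hand, Theorem \ref{negativeurvature} follows immediately from the conformal change of curvature formula \eqref{changecurvature}. First I would introduce the open convex cone $U=\{f\in C^0(M):f>0\}$ (nonempty, since $\mathbf{1}\in U$) and the set $A=\{\DeltaF\phi-\kappa:\phi\in C^\infty(M)\}$. Because $\phi\mapsto\DeltaF\phi$ is linear and carries smooth functions to continuous functions, $A$ is an affine subspace of $C^0(M)$, hence convex; and the lemma says exactly that $A\cap U\neq\emptyset$. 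So I would argue by contradiction, assuming $A\cap U=\emptyset$.

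Then, by the Hahn--Banach separation theorem (after possibly replacing the separating functional by its negative) there would exist a nonzero continuous linear functional on $C^0(M)$ --- which by Riesz representation I write as integration against a nonzero finite signed measure $m$ --- and a constant $c\in\R$ with $\int_M(\DeltaF\phi-\kappa)\,dm\leq c$ for all $\phi\in C^\infty(M)$ and $\int_M u\,dm\geq c$ for all $u\in U$. The next step is to upgrade $m$ to a positive measure: since $U$ is a cone, a value $\int_M u\,dm<0$ for some $u\in U$ would make $\int_M tu\,dm\to-\infty$, contradicting the lower bound; so $m\geq 0$ on $U$, and testing against $f+\varepsilon\mathbf{1}$ with $\varepsilon\to 0$ gives $m\geq 0$ on all of $C^0(M)$. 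As $m\neq 0$ I normalize it to a probability measure, and testing the lower bound against $\varepsilon\mathbf{1}$ as $\varepsilon\to 0^+$ yields $c\leq 0$.

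After that I would show $m$ is harmonic. Applying the upper bound to $\phi$ and to $-\phi$ gives $\left|\int_M\DeltaF\phi\,dm\right|\leq c+\int_M\kappa\,dm$ for every smooth $\phi$; since $\phi\mapsto\int_M\DeltaF\phi\,dm$ is linear, replacing $\phi$ by $t\phi$ and letting $t\to\infty$ forces it to vanish identically, so $m$ is harmonic by Remark \ref{remouille}. Then Ghys' theorem \ref{gaussbonnet} gives $\int_M\kappa\,dm<0$, while plugging $\phi=0$ into the upper bound gives $-\int_M\kappa\,dm\leq c\leq 0$, that is $\int_M\kappa\,dm\geq 0$ --- a contradiction. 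Hence $A\cap U\neq\emptyset$, i.e. there is a smooth $\phi$ with $\DeltaF\phi-\kappa>0$ everywhere, which is the assertion of the lemma.

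I expect the only delicate points to be the two functional-analytic maneuvers: extracting a \emph{positive} measure from the separating functional (this is where I crucially use that $U$ is a cone rather than merely open convex) and deducing harmonicity of $m$ from the boundedness of $\phi\mapsto\int_M\DeltaF\phi\,dm$ together with Remark \ref{remouille}. All the genuine geometry --- that the leafwise Gaussian curvature has negative integral against every harmonic measure --- is packaged in Theorem \ref{gaussbonnet}, which I take as given; without an assumption forcing the leaves to be hyperbolic that theorem fails, which is precisely where the hypothesis of the lemma enters.
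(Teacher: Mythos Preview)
Your proof is correct and follows essentially the same Hahn--Banach/Riesz/Gauss--Bonnet argument as the paper's; the only difference is organizational --- the paper passes to the quotient $C^0(M)/\overline{\{\DeltaF\phi:\phi\in C^\infty(M)\}}$ and separates the image of $\kappa$ from the projected cone of negative functions there, whereas you work directly in $C^0(M)$ and separate the affine subspace $\{\DeltaF\phi-\kappa\}$ from the open positive cone. Both routes produce a harmonic probability measure with $\int_M\kappa\,dm\geq 0$, contradicting Theorem~\ref{gaussbonnet}.
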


\begin{proof}
Let $\CC=C^0(M)$ denote the Banach space of continuous functions of $M$ endowed with the supremum norm $||.||_{\infty}$. Let $\HH$ be the closure in $\CC$ of the space $\{\,\DeltaF\phi;\phi\in C^{\infty}(M)\}$. This is a closed subspace of a Banach space so the quotient $\CC/\HH$ is naturally a Banach space and $\Pi:\CC\to\CC/\HH$ is continuous and open.

\paragraph{Claim --}\emph{The space $\HH ar$ of harmonic measures is identified with the space of positive and continuous linear forms of $\CC/\HH$.}

\begin{proof}[Proof of the claim]Define the \emph{orthogonal complement} of $\HH$ as the closed space of continuous linear forms $m$ defined in $\CC$ such that $m(h)=0$ for every $h\in\HH$. This space identifies isometrically with the topological dual of $\CC/\HH$.

A harmonic measure is a Radon measure vanishing on every element $\DeltaF\phi$, $\phi\in C^{\infty}(M)$ so it must vanish on every element of $\HH$, which is by definition a uniform limit of such functions. Now by Riesz representation theorem, positive elements of the orthogonal complement of $\HH$ are Radon measures vanishing in particular on every laplacian $\DeltaF\phi$, $\phi\in C^{\infty}(M)$: they are harmonic measures by Remark \ref{remouille}.
\end{proof}

Consider now the open cone $\Lambda^-\dans\CC$ of negative continuous functions and its projection $\widehat{\Lambda}^-=\Pi(\Lambda^-)\dans\CC/\HH$. Let $\hat{\kappa}=\Pi(\kappa)\in\CC/\HH$.

\paragraph{Claim --} \emph{We have $\hat{\kappa}\in\widehat{\Lambda}^-$.}

\begin{proof}[Proof of the claim]
Suppose the contrary. By continuity and openness of $\Pi$, $\widehat{\Lambda}^-$ is a nonempty open convex subset of the normed vector space $\CC/\HH$ and $\hat{\kappa}\notin\widehat{\Lambda}^-$. Hahn-Banach's theorem (see \cite[Lemme I.3]{Brezis}) states that there exists $m\in(\CC/\HH)'$ and $a\in\R$ such that for every $u\in\widehat{\Lambda}^-$, $m(u)< a=m(\hat{\kappa})$. 

Let us evaluate $m$ on elements of the form $\lambda u$, $u\in\widehat{\Lambda}^-$, $\lambda>0$. Letting $\lambda$ tend to infinity we see that $m\leq 0$ on $\widehat{\Lambda}^-$. Letting $\lambda$ tend to zero we see that $a\geq 0$.

Hence the linear form $m$, correspond to an element of the orthogonal complement of $\HH$ which is nonpositive on nonpositive functions.

Using the first claim we see that we found a harmonic measure $m$ such that $\int_\M\kappa\,dm= a\geq 0$: this contradicts Ghys' Gauss-Bonnet Theorem \ref{gaussbonnet}.
\end{proof}

Finally to conclude the proof of Lemma \ref{keylemmaGhys} note that the previous claim implies that there exists $h\in\HH$ such that $\kappa-h\in\Lambda^-$ i.e. $\kappa-h<0$ on $M$. Now by definition of $\HH$ there must exist a smooth function $\phi$ such that $\kappa-\DeltaF\phi<0$ on $M$.
\end{proof}

\section*{Acknowledgments} 
S.A was supported by a post-doctoral grant at IMPA financed by CAPES. During the preparation of this work, the authors benefited from the excellent working conditions provided by IMPA and UFF. We wish to thank both institutions.

We were strongly influenced by the work of Christian Bonatti, Xavier G{\'o}mez-Mont and Matilde Mart{\'i}nez, and it is a pleasure to thank them here. Special thanks go to {\'E}tienne Ghys who explained us his very nice proof of Theorem \ref{negativeurvature} and kindly authorized us to reproduce his argument. The question of knowing when the foliated geodesic flow is partially hyperbolic emerged after a discussion with Rafael Potrie. We are grateful to him. Finally we thank the referees for their valuable comments.

\bibliographystyle{plain}

\begin{flushleft}
{\scshape S\'ebastien Alvarez}\\
Centro de Matem\'atica (CMAT), Universidad de la Rep\'ublica\\
Igua 4225, Montevideo, 11400, Uruguay\\
email: salvarez@cmat.edu.uy

\smallskip

{\scshape Jiagang Yang}\\
Departamento de Geometria, Instituto de Matem{\'a}tica e Estat{\'i}stica, Universidade Federal Fluminense (UFF)\\
Rua M{\'a}rio Santos Braga S/N, Niteroi, 24020-140, Brasil.\\
email: yangjg@impa.br
\end{flushleft}

\end{document}